\def\1{{\mathbbm{1}}}
\def\a{{\alpha}}
\def\b{{\beta}}
\def\g{{\gamma}}
\def\w{{\omega}}
\def\l{{\lambda}}
\def\s{{\sigma}}
\def\t{{\tau}}
\def\G{{\Gamma}}
\def\ee{{\mathbf{e}}}
\def\BQ{{\mathbb{Q}}}
\def\BC{{\mathbb{C}}}
\def\BR{{\mathbb{R}}}
\def\BZ{{\mathbb{Z}}}
\def\bb{\mathbf{b}}
\def\AA{\mathscr{A}}
\def\CC{\mathscr{C}}
\def\BB{\mathscr{B}}
\def\diag{\operatorname{diag}}
\newcommand\irr{\operatorname{irr}}
\newcommand\SC{\operatorname{SC}}
\newcommand\Rep{\operatorname{Rep}}
\newcommand\Irr{\operatorname{Irr}}
\newcommand\ord{\operatorname{ord}}
\newcommand\Aut{\operatorname{Aut}}
\newcommand\id{\operatorname{id}}
\newcommand\SVec{\operatorname{sVec}}
\newcommand\FPdim{\operatorname{FPdim}}
\newcommand\FSexp{\operatorname{FPexp}}
\DeclareMathOperator\res{\operatorname{res}}
\def\to{\rightarrow}
\def\dim{{\mbox{\rm dim}}}
\def\ord{{\mbox{\rm ord}}}
\def\Hom{{\mbox{\rm Hom}}}
\def\Mod{{\text{-mod}}}
\def\R{{\Rep}}
\newcommand\inv{^{-1}}
\newcommand\ol[1]{\overline{#1}}
\renewcommand\o{\otimes}
\newtheorem{thm}{Theorem}[section]
\newtheorem{cor}[thm]{Corollary}
\newtheorem{prop}[thm]{Proposition}
\newtheorem{lem}[thm]{Lemma}
\newtheorem{q}[thm]{Conjecture}
\theoremstyle{definition}
\newtheorem{example}[thm]{Example}
\theoremstyle{remark}
\newtheorem{remark}[thm]{Remark}
\def\namelabel#1#2{\@bsphack
 \protected@write\@auxout{}%
 {\string\newlabel{#1.nme}{{#2}{#2}}}%
 \@esphack}
\begin{document}
\title{Modular quasi-Hopf algebras and groups with one involution}

\author{Geoffrey Mason}
\address{Department of Mathematics, UC Santa Cruz, Ca 95064}
\email{gem@ucsc.edu}
\thanks{The first author is supported by Simons grant $\#62524$}
\author{Siu-Hung Ng}
\address{Department of Mathematics, Louisiana State University, Baton Rouge, LA 70803}
 \email{rng@math.lsu.edu}
 \thanks{The second author was partially supported by NSF grant DMS1664418}
 \maketitle
 
 \begin{abstract} In a previous paper the authors constructed a class of quasi-Hopf algebras $D^{\w}(G, A)$ associated to a finite group $G$, generalizing the twisted quantum double construction.\ We gave necessary and sufficient conditions, cohomological in nature, that the corresponding module category
 $\Rep(D^\omega(G, A))$ is a modular tensor category.\ In the present paper we verify the cohomological conditions for the class of groups $G$ which \emph{contain a unique involution}, and in this way we obtain an explicit construction of a new class of modular quasi-Hopf algebras.\ We develop the basic theory for general finite groups $G$, and also a parallel theory concerned with the question of when $\Rep(D^\omega(G, A))$ is super-modular rather than modular. We give some explicit examples involving binary polyhedral groups and some sporadic simple groups.
\end{abstract}

\medskip

\ Table of Contents\\
1.\ Introduction and statement of results.\\
2.\ Background and preliminary results.\\
3.\ $\omega$-admissibility.\\
3.1\  Admissible cocycles\\
3.2\ Admissibility and groups with one involution.\\
3.3\ Admissibility and representation groups.\\
3.4\ Examples.\\
4.\  Modularity and Super-modularity.\\
4.1\  Modular and Super-modular tensor categories.\\
4.2\  Modularity of $D^{\omega}(G, A)$.\\
4.3\  Proofs of the main Theorems.\\
5.\  Realizations of some Modular tensor Categories \\
5.1\  Reconstruction. \\
5.2\ A Conjecture for Binary Polyhedral Groups.\\
5.3\ The case of type $A$.\\
5.4\ The modular data of $V^{A_4}$ and $V^{D_4}$\\
5.5\ Two sporadic examples involving $2J_2$ and $Co_0$.

\section{Introduction and statement of results}
Suppose that $G$ is a finite group and $\w \in Z^3(G, \BC^{\times})$ a normalized, multiplicative
$3$-cocycle.\ The
\emph{twisted quantum double} $D^{\w}(G)$, widely studied since its introduction in \cite{DPR}, is a
quasi-Hopf algebra canonically attached to this data.\ A fundamental property of this class of
quasi-Hopf algebras is that they are \emph{modular} in the sense that
the module category $\Rep(D^{\w}(G))$ is a \emph{modular tensor category}.\ This follows from some
remarkable results of M\"{u}ger \cite[Theorem 3.16 and Proposition 5.10]{Mu}.

\medskip
In \cite{MN2} we introduced a \emph{generalization} of the twisted quantum double construction,
denoted by
$D^{\w}(G, A)$.\ The new ingredient is a central subgroup $A \subseteq Z(G)$, the case $A = 1$ being
the original twisted quantum double of $G$.\ By its very definition, $D^{\w}(G, A) = \BC^G_{\w}\#_c\BC(G/A)$ is a
cleft extension, where the subscript $c$ denotes some cohomological data
associated to $\omega$ and satisfying compatibilities sufficient to ensure that $D^{\w}(G, A)$ is a
quasi-Hopf algebra. (Further details
about this and other cohomological technicalities will be enlarged upon in Section 2.)\ The two main
results of
\cite{MN2} are essentially as follows (a precise formulation is given below):

\medskip\noindent
(i) necessary and sufficient conditions that there is a surjective morphism of quasi-bialgebras
(indeed, of quasi-Hopf algebras)
$\varphi : D^{\w}(G){\rightarrow} D^{\w}(G, A)$ which preserves the associated cleft extensions.

\medskip\noindent
(ii) assuming that $\varphi$ exists, necessary and sufficient conditions that $\Rep(D^{\w}(G, A))$ is
a modular tensor category.

\medskip
In both (i) and (ii), the necessary and sufficient conditions are cohomological in nature and it is
usually nontrivial to decide when they are satisfied by a given triple $(G,A, \w)$.\ The main purpose of the present paper is to
present an infinite class of groups
for which the cohomological conditions are indeed satisfied.\ What obtains is an infinite class of
modular quasi-Hopf algebras, almost all of which were unknown before now.

\medskip
It transpires that the case when $|A| = 2$ is particularly interesting, and it is this case that
mainly concerns us here.\ Indeed,
we will consider something stronger, namely finite groups $G$ which have a \emph{unique} subgroup
$A$ of order $2$.\ (The containment $A \subseteq Z(G)$ is
an immediate consequence.)\ This is a famous class of groups:\ the Sylow
$2$-subgroups of
$G$ are either cyclic or generalized quaternion, and the Brauer-Suzuki theorem
(\cite{G}, Chapter 12)
\emph{classifies} the possible quotient groups $G/O(G)$.\ ($O(G)$ is the largest normal subgroup of
$G$ of odd order.)\
Cohomologically, the Artin-Tate theory \cite{CE} says that this
class of groups has \emph{$2$-periodic cohomology}.

\medskip
 The Artin-Tate theory will be indispensable for the proof of the our main Theorem.\ The result of
 Brauer-Suzuki
points to interesting families of generalized twisted quantum doubles $D^{\w}(G, A)$ which are
modular by our results.\
Among these, we mention a family with $G = SL_2(q)$\ ($q$ any odd prime power) and a family 
for which
$G$ is a binary polyhedral group.\ See Subsection \ref{SSex} for further background and additional examples.\ Note that for
any choice of $G$, there
will generally be \emph{many} choices of $\w$ for which modularity holds.\ We make this precise in the
statement of the main
Theorem, to which we now turn.

\medskip
In order to state our main Theorem, we need a first installment of the results of Artin-Tate,
namely that
if $G$ has a unique subgroup $A$ of order $2$ then \emph{$4$ is a $2$-period for $G$}.\ Thus
the $2$-torsion subgroup $H^4(G, \BZ)_2$ of the fourth cohomology $H^4(G, \BZ)$ is a cyclic
group of order equal to the
$2$-part $|G|_2$ of $|G|$ (i.e., the order of a Sylow $2$-subgroup $G_2$ of $G$).\ We call any generator
of $H^4(G, \BZ)_2$ a
\emph{$2$-generator}, and we say that a cohomology class $\a \in H^4(G, \BZ)$ \emph{contains a
$2$-generator} if
the subgroup $\langle \a \rangle$ contains a $2$-generator.

\medskip
In the applications to quasi-Hopf algebras, we usually use \emph{multiplicative} cocycles.\ Thanks to
the isomorphisms
$H^n(G, \BZ) \cong H^{n-1}(G, \BC^{\times})$, it is easy to pass back and forth between additive and
multiplicative coefficients.\ We will usually not comment on this, except to say that the Artin-Tate
theory can, and will, be stated and used with $\BC^{\times}$-coefficients.
We can now state what is perhaps our main result.

\medskip\noindent
$\bf{Main\ Theorem}$.\ Suppose that $G$ is a finite group that contains a \emph{unique} subgroup
$A$ of order $2$, and let $\omega \in Z^3(G, \BC^{\times})$ be a normalized $3$-cocycle with $[\omega]$
the corresponding class in $H^3(G, \BC^{\times})$.\ Then the following are equivalent:
\begin{eqnarray*}
&&(a)\ D^{\w}(G, A)\ \mbox{is a modular quasi-Hopf algebra},\\
&&(b)\ \mbox{$[\omega]$ contains a $2$-generator}.
\end{eqnarray*}
 Exactly one half of the classes in $H^3(G, \BC^{\times})$ satisfy (b).

\medskip
The first three Sections of the paper are devoted to developing 
some basic facts about $D^{\omega}(G, A)$ and its module category.\ In the fourth Section we apply these results to prove the Main Theorem.\ We also develop two separate, but related, contexts: (i) analogs of the Main Theorem for some groups $G$ having a center of order 2 but more than one involution. We illustrate with two particularly interesting examples in which $G$ is the Schur cover $2.Co_1=Co_0$ or $2.J_2$ of one of the sporadic simple groups $Co_1$ or $J_2$ respectively; (ii) criteria for recognizing when $\Rep(D^{\omega}(G, A))$ is not a modular tensor category but rather a \textit{super-modular} tensor category.\ As an example, and in contrast to the Main Theorem, we prove (Theorem \ref{thmspor}) that, for $G=Co_0$,
$\Rep(D^{\omega}(G, A))$ is a super-modular tensor category if, and only if, $[\omega]$ contains a $2$-generator.\ 

\medskip
In addition to these two sporadic examples, the final Section of the paper is concerned with the \textit{reconstruction problem for modular tensor categories}.\ Given a MTC
$\mathcal{C}$, this asks:\ can we find a strongly regular vertex operator algebra $V$ (informally, a well-behaved VOA) for which there is an equivalence
of modular tensor categories $V$\Mod\ $\simeq \mathcal{C}$?\ When $G$ is a cyclic group of even order, we  show that 
$\Rep(D^\w(G, A)) \simeq V^{\ol G}_{A_1}\Mod$  where $A_1$ is the root lattice of type $A_1$, $V_{A_1}$ is the associated lattice theory VOA,
$A\subseteq G$ has order $2$,
and $\ol G = G/A$.\ We further give strong evidence, in terms of modular data, for similar equivalences for other binary polyhedral groups.\ 
Inspired by these empirical facts, we present two precise conjectures concerning an equivalence of $\Rep(D^\w(G,A))$ and the module categories of the $\ol G$-orbifold of the lattice VOAs $V_{A_1}$ and $V_{E_7}$. These conjectures provide explicit affirmative answers to the reconstruction problem when $G$
is a binary polyhedral group or $2.J_2$ respectively.\ We refer the reader to Subsection \ref{SSintro} for further details.

\section{Background and preliminary results}
With the exception of the coefficients of cohomology, all groups considered will be finite.\ We
 use standard notation, in particular $Z(G)$ is the \emph{center} of $G$, $G'$ the
\emph{commutator subgroup},
$\widehat{G} := H^1(G, \BC^{\times})$ is the group of \emph{characters}, and if $p$ is a prime then
$G_p$ is a Sylow $p$-subgroup of $G$.\ For $g \in G$, the \emph{centralizer} of $g$ in $G$ is $C(g) := \{x\in G \mid gx = xg\}$.\

\medskip
Let $\w$ be a normalized 3-cocycle of $G$. For $g, x, y \in G$, we define
\begin{eqnarray}
&&\theta_g(x,y) := \frac{\w(g, x, y)\w(x, y, g^{xy})}{\w(x, g^x, y)}\label{eq:01},\\
&&\gamma_g(x,y) := \frac{\w(x, y, g)\w(g, x^g, y^g)}{\w(x, g, y^g)}\label{eq:02}\,.
\end{eqnarray}
We have
\begin{eqnarray}
&&\ \ \ \ \ \ \ \ \ \ \ \ \ \ \ \ \ \ \ \ \ \ \theta_g(x, y)\theta_g(xy, z) = \theta_{g^x}(y, z)\theta_g(x, yz),\ \label{thetaid}\\
&&\theta_g(x, y)\theta_h(x, y)\gamma_x(g, h)\gamma_y(g^x, h^x) = \theta_{gh}(x, y)\gamma_{xy}(g, h).
\label{thetagammaid}
\end{eqnarray}
These are the identities required to show that $D^{\w}(G)$ is an algebra and a coalgebra (\cite{DPR}, \cite{MN1}).

\medskip
 Notice that
the restrictions of $\theta_g$ and
$\gamma_g$ to $C(g)$ \emph{coincide}.\ By (\ref{thetaid}) this restriction defines an element in
$Z^2(C(g), \BC^{\times})$.\
In particular, if $g \in Z(G)$ then $\g_g$ is a $2$-cocycle on $G$, and (following \cite{MN2}) we
set
\begin{eqnarray*}
Z_{\w}(G) := \{g \in Z(G) \mid\g_g \in B^2(G, \BC^{\times})\}.
\end{eqnarray*}
This is a subgroup of $Z(G)$.
\medskip

It transpires that most of our considerations concern the multiplicative group of \emph{central
group-like elements}
in $D^{\w}(G)$ and some of its subgroups.\ We review this structure here, following
\cite{MN1}, \cite{MN2}.\ The group of central group-like elements, denoted by $\G_0^{\w}(G)$, may be
described by a short exact sequence
$$
1 \to \widehat G \xrightarrow{\iota} \G_0^\w(G) \xrightarrow{p} Z_{\w}(G) \to 1.
$$
To explain this, fix a family $\t = \{\t_x\}_{x \in Z_\w(G)}$ of normalized 1-cochains on $G$ such that $\delta \t_x
 = \theta_x$ for each $x \in Z_\w(G)$.\ Each element $u \in \G_0^\w(G)$ is uniquely determined by a pair $(\chi, x) \in \widehat G
{\times} Z_\w(G)$ satisfying
$$
 u = \sum_{g \in G} \chi(g) \t_x(g) e_g x\,.
$$
Then $p(u) = x$ and $\iota(\chi) = \sum_{g \in G}\chi (g) e_g 1$.\ With respect to the section of
$p$ defined by $s_\t : Z_\w(G) \to \G_0^\w(G),\ x \mapsto \sum_{g \in G} \t_x(g) e_g x$, the associated
2-cocycle $\b_\t \in Z^2(Z_\w(G), \widehat G)$ is given by
\begin{equation} \label{eq:beta}
\b_\t(x,y)(g) = \theta_g (x,y) \frac{\t_x(g)\t_y(g)}{\t_{xy}(g)}\ \ (x, y\in Z_\w(G), g \in G).
\end{equation}
The assignment $\Lambda : \w \mapsto \b_\t$ defines a group homomorphism 
$H^3(G, \BC^{\times})\rightarrow H^2(Z_\w(G), \widehat G)$.\ We will simply write $\b$ for $\b_\t$ when there is no ambiguity. 

\medskip
For a subgroup $A \subseteq Z_{\w}(G)$ we define $\G_0^\w(G, A)$ by pulling-back along $p$.\ Thus we
have a diagram
$$
 \xymatrix{
 1 \ar[r] & \widehat{G}\ar[d]^-{\id} \ar[r] &\G^\w_0(G, A) \ar[d]^-{} \ar[r]^-{p} &A \ar[d]\ar[r] &
 1\\
 1 \ar[r] & \widehat{G} \ar[r] & \G^{\w}_0(G) \ar[r]^-{p} & Z_{\w}(G)\ar[r] & 1\,.
 }
 $$
The following Proposition is a direct consequence of \cite[Prop.5.2]{MN2}, but we provide a
computational proof for the sake of completeness.
\begin{prop}\label{p:1}
 The equivalence class of the exact sequence
 $$
1 \to \widehat G \xrightarrow{\iota} \G_0^\w(G, A) \xrightarrow{p} A \to 1
$$
is independent of the choices of the representative in the cohomology class of $\w$ and the family
of $1$-cochains $\{\tau_x\}_{x \in A}$ satisfying $ \g_x = \delta \t_x$ for all $x \in 
A$.\ If, in addition, $\BZ_2 \cong A$ and $\exp(\widehat G) \mid 2$, then the 2-cocycle $\b \in Z^2(A, \widehat G)$ 
given by \eqref{eq:beta} is independent of the choices of representative of 
$[\w] \in H^2(G, \BC^\times)$ and the family $\t = \{\t_x\}_{x\in A}$ of $1$-cochains.
\end{prop}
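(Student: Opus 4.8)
The plan is to reduce both assertions to statements about the $2$-cocycle $\b=\b_\t$ of \eqref{eq:beta}, restricted to $A$. Since $\G_0^\w(G)$ consists of central group-like elements of $D^\w(G)$ it is abelian, so the pulled-back extension $1\to\widehat G\to\G_0^\w(G,A)\to A\to1$ is a central extension of abelian groups and $A$ acts trivially on $\widehat G$. Its equivalence class is therefore the class in $H^2(A,\widehat G)$ (trivial coefficients) of the restriction $\b|_{A\times A}$. Thus the first assertion asks that this class be independent of the two choices, while the ``in addition'' clause asks that the normalized cocycle $\b$ itself be independent.

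First I would treat the dependence on $\t$. If $\t=\{\t_x\}$ and $\t'=\{\t'_x\}$ both satisfy $\delta\t_x=\delta\t'_x=\theta_x$ on $G$ for $x\in A$ (recall $\theta_x=\g_x$ for central $x$), then $\mu_x:=\t'_x/\t_x$ lies in $Z^1(G,\BC^\times)=\widehat G$, with $\mu_1=1$. Substituting into \eqref{eq:beta} gives $\b_{\t'}(x,y)=\b_\t(x,y)\,(\delta\mu)(x,y)$, where $\mu\colon A\to\widehat G,\ x\mapsto\mu_x$ is a normalized $1$-cochain and $\delta$ is the differential of $C^\bullet(A,\widehat G)$. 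Hence $[\b_{\t'}]=[\b_\t]$ in $H^2(A,\widehat G)$.

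Next I would treat the dependence on the representative $\w$. Writing $\w'=\w\,\delta\eta$ for a $2$-cochain $\eta$ on $G$, a direct substitution in \eqref{eq:01} shows $\theta'_g(x,y)=\theta_g(x,y)\psi_g(x,y)$, where $\psi$ is the evident combination of $\delta\eta$; the same computation shows $\g_g$ is altered only by a coboundary, so $Z_{\w'}(G)=Z_\w(G)$ and cochains $\t'_x$ with $\delta\t'_x=\theta'_x$ still exist on $A$. Rerunning the derivation of \eqref{eq:beta} with $\w',\t'$ then exhibits the new cocycle as $\b_\t$ times a coboundary $\delta\nu$ with $\nu\colon A\to\widehat G$; this is exactly the assertion that $\Lambda\colon[\w]\mapsto[\b_\t]$ is the well-defined homomorphism $H^3(G,\BC^\times)\to H^2(Z_\w(G),\widehat G)$ recorded above, i.e.\ \cite{MN2}. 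Restricting to $A$ completes the first assertion. I expect this cocycle bookkeeping --- verifying that the $\delta\eta$-correction to $\theta$ assembles into a genuine $\widehat G$-valued coboundary on $A$ --- to be the one genuinely computational step.

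Finally, for the ``in addition'' clause I would exploit that $A\cong\BZ_2=\langle a\rangle$ and $\exp(\widehat G)\mid2$ kill the ambiguity at the level of cochains, not merely classes. Each correcting cochain produced above is a normalized $\mu\colon A\to\widehat G$, determined by $\mu_a\in\widehat G$, whose coboundary is the normalized cocycle with $(\delta\mu)(a,a)=\mu_a^2$. Since $\exp(\widehat G)\mid2$ forces $\mu_a^2=1$, every normalized coboundary in $Z^2(A,\widehat G)$ is trivial; equivalently, each class in $H^2(A,\widehat G)$ has a unique normalized representative. As $\b_\t$ is normalized (because $\w$ is normalized, so $\theta_g(1,y)=\theta_g(x,1)=1$, and the $\t_x$ are normalized $1$-cochains), the corrections $\delta\mu$ and $\delta\nu$ from the previous paragraphs are identically $1$, whence $\b$ is unchanged on the nose. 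This proves the sharper statement.
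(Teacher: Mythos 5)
Your argument is correct and follows essentially the same route as the paper: both reduce the claim to showing that replacing $\w$ by $\w\,\delta f$ and replacing the family $\t$ alters $\b_\t$ only by $\delta\mu$ for some normalized $\mu\in C^1(A,\widehat G)$, and then observe that when $A=\langle a\rangle\cong\BZ_2$ and $\exp(\widehat G)\mid 2$ the correction $(\delta\mu)(a,a)=\mu_a^2$ is identically $1$. The only difference is organizational: the paper treats the two sources of ambiguity simultaneously and carries out explicitly the one computation you defer (the $\delta f$-corrections to $\theta_g(x,y)$ and to the $\t_x$ cancel because $\tilde f(g,x)\tilde f(x,g)=1$ for central $x$), whereas you separate the two steps and invoke the well-definedness of $\Lambda$ from \cite{MN2} for the cocycle bookkeeping.
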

\begin{proof}
Let $\w' = \w \delta f$ where $f$ is a normalized 2-cochain on $G$. Suppose $\theta'$ and $\gamma'$
are defined as above using $\w'$. Then
$$
\theta'_g(x,y) = \theta_g(x,y) \frac{\tilde f(g, xy)}{\tilde f(g,x) \tilde f(g^x, y)}
 $$
 and
$$
\g'_g(x,y) = \g_g(x,y)
\frac{f(y,g)}{f(x y, g)f(x, y)}\frac{f(x^g, y^g)f(g, (xy)^g)}{ f(g,x^g)}\frac{f(x, g)}{f(g, y^g)}
$$
where $\tilde f(g,x) = \frac{f(g,x)}{f(x, g^x)} $.\ In particular, if $x,y \in Z_\w(G) \subseteq
Z(G)$,
then
\begin{eqnarray*}
&&\theta'_g(x,y) = \theta_g(x,y) \delta \tilde f\inv (g, -)(x,y),\\
&& \gamma'_g(x,y) = \gamma_g(x,y) \delta \tilde f\inv (g, -)(x,y)\,.
\end{eqnarray*}

Suppose $\gamma_x = \delta \t_x$ for all $x \in A$.\ Then $\gamma_x' = \delta \t_x \tilde f\inv(x,-)
\a_x$ for some $\a_x \in \widehat G$.\ Therefore,
\begin{eqnarray*}
\beta'(x,y)(g) & = & \theta'_g(x,y) \frac{\t_x(g) \t_y(g)}{\t_{xy}(g)} \frac{\tilde f(xy,g)}{\tilde
f(x,g)\tilde f(y,g)} \frac{\a_x(g) \a_y(g)}{\a_{xy}(g)} \\
&=& \theta_g(x,y) \frac{\tilde f(g,xy)}{\tilde f(g,x) \tilde f(g, y)}\frac{\t_x(g)
\t_y(g)}{\t_{xy}(g)} \frac{\tilde f(xy,g)}{\tilde f(x,g)\tilde f(y,g)} \frac{\a_x(g)
\a_y(g)}{\a_{xy}(g)}\\
 &=& \beta(x,y)(g) \frac{\a_x(g) \a_y(g)}{\a_{xy}(g)}
\end{eqnarray*}
for $x,y \in A$ and $g \in G$.\ If we have $\BZ_2 \cong A = \langle z\rangle$ and 
$\exp(\widehat G) \mid2$, then
$\b(x,y) = \b'(x,y) = 1$ for $(x,y) \ne (z,z)$, and
$$
\b'(z,z)(g) = \b(z,z)(g) \frac{\a_z(g) \a_z(g)}{\a_{1}(g)} = \b(z,z)(g) \a_z(g)^2 = \b(z,z)(g)
$$
for all $g \in G$.
\end{proof}

Let $A \subseteq Z_\w(G)$ be a subgroup and $H \subseteq G$ a subgroup containing $A$.\ Then 
$A \subseteq Z_{\w_H}(H)$, where $\w_H$ is the restriction of $\w$ to $H$, and so we have an exact
sequence
$$
1\to \widehat H \to \G_0^\w(H,A) \to A\to 1\,.
$$
For $u \in \G_0^\w(G, A)$, $u = \sum_{g \in G} \t_x(g) e_g x$ for some $x \in A$ and 
$\t_x \in C^1(G, \BC^\times)$ such that $\delta\t_x = \g_x$.\ Then $\pi_H(u) = \sum_{h \in H} \t_x(h) e_h x \in 
\G_0^{\w_H}(H,A)$, and $\pi_H$ defines a
group homomorphism $\pi_H : \G^\w_0(G, A) \to \G^{\w_H}_0(H, A)$. Moreover, we have following Proposition.

\begin{prop}\label{p:subgroup}
 Suppose $\w$ is a 3-cocycle of $G$, with subgroups $A \subseteq Z_\w(G)$ and $A \subseteq H \subseteq G$.\ 
 Then there is a commutative diagram of exact sequences:
 $$
 \xymatrix{
 1 \ar[r] & \widehat{G}\ar[d]^-{\res} \ar[r] &\G^\w_0(G, A) \ar[d]^-{\pi_H} \ar[r]^-{p} & A
 \ar[d]^-{\id} \ar[r] & 1\\
 1 \ar[r] & \widehat{H} \ar[r] & \G^{\w_H}_0(H, A) \ar[r]^-{p} & A\ar[r] & 1\,.
 }
 $$
\end{prop}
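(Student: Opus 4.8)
The plan is to verify directly that the map $\pi_H$ defined just before the statement fits into the asserted commutative diagram, and then check that the two squares commute and that $\pi_H$ is a well-defined group homomorphism compatible with the short exact sequences. First I would confirm that the bottom row is a genuine short exact sequence. This is immediate: the hypothesis $A \subseteq Z_\w(G) \cap H$ together with the preceding paragraph gives $A \subseteq Z_{\w_H}(H)$, so $\G_0^{\w_H}(H,A)$ is defined and sits in the exact sequence $1 \to \widehat H \to \G_0^{\w_H}(H,A) \xrightarrow{p} A \to 1$ obtained by applying the pullback construction to the restricted cocycle $\w_H$. The top row is the sequence for $G$ from Proposition \ref{p:1}.

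Next I would check the two squares separately. For the right-hand square, commutativity reduces to the statement that $p(\pi_H(u)) = p(u)$ for $u \in \G_0^\w(G,A)$. Writing $u = \sum_{g \in G}\t_x(g)\,e_g x$ with $x \in A$, we have $\pi_H(u) = \sum_{h \in H}\t_x(h)\,e_h x$ by definition, so both composites send $u$ to $x \in A$, which is exactly what the identity map on $A$ demands. For the left-hand square, I would trace a character $\chi \in \widehat G$ through the two paths: along the top, $\iota(\chi) = \sum_{g\in G}\chi(g)\,e_g 1$ and then $\pi_H(\iota(\chi)) = \sum_{h\in H}\chi(h)\,e_h 1$; along the bottom, $\res(\chi) = \chi|_H$ followed by the inclusion $\widehat H \hookrightarrow \G_0^{\w_H}(H,A)$ gives $\sum_{h\in H}\chi(h)\,e_h 1$. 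These agree, so the square commutes.

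The main point requiring care is that $\pi_H$ is a well-defined group homomorphism, which is also what makes the whole diagram meaningful. Well-definedness amounts to checking that $\pi_H(u)$ is independent of the expression of $u$ and genuinely lands in $\G_0^{\w_H}(H,A)$; the latter follows because the restriction to $H$ of a $1$-cochain $\t_x$ on $G$ with $\delta\t_x = \g_x$ satisfies $\delta(\t_x|_H) = \g_x|_H = (\g_H)_x$, so $\t_x|_H$ is a valid defining cochain for the element of $\G_0^{\w_H}(H,A)$ over $x$. For multiplicativity I would compute the product of two elements $u_x, u_y \in \G_0^\w(G,A)$ using the structure of $\G_0^\w(G)$ from \cite{MN1,MN2}: the product is governed by the $2$-cocycle $\b$ of \eqref{eq:beta}, which is built from $\theta_g(x,y)$ and the ratio $\t_x(g)\t_y(g)/\t_{xy}(g)$. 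Since $\theta_g$, $\t_x$, and $\chi$ all restrict naturally from $G$ to $H$, and since $\b$ restricts to the corresponding cocycle $\b_H$ for $H$, the multiplication rule is preserved under $\pi_H$, giving $\pi_H(u_x u_y) = \pi_H(u_x)\pi_H(u_y)$.

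I expect the main obstacle to be the bookkeeping in this last multiplicativity check: one must verify that the factor $\b(x,y)(g)$ appearing when multiplying in $\G_0^\w(G)$ restricts, for $g \in H$, to precisely the factor $\b_H(x,y)(g)$ governing multiplication in $\G_0^{\w_H}(H,A)$. This is a formal consequence of the naturality of the defining formula \eqref{eq:beta} under restriction of all inputs to $H$, but it requires confirming that the chosen cochains $\{\t_x\}_{x\in A}$ for $G$ can be used (by restriction) as the defining cochains for $H$ — which is legitimate precisely because $\delta(\t_x|_H) = \g_x|_H$, as noted above. Once this compatibility is in place, all three remaining verifications (exactness of the bottom row, commutativity of both squares, and the homomorphism property) follow by direct substitution, and the commutative diagram of exact sequences is established.
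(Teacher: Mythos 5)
Your proposal is correct and follows essentially the same route as the paper, which simply records that the result is a direct computational consequence of the definitions of $\pi_H$ and $\G^{\w_H}_0(H,A)$; your write-up just makes those verifications (well-definedness via $\delta(\t_x|_H)=\g_x|_H$, multiplicativity via restriction of $\b$, and commutativity of both squares) explicit.
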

\begin{proof}
The proof is a direct computational consequence of the definitions of $\pi_H$ and $\G^{\w_H}_0(H,
A)$.
\end{proof}

The following standard result will be used repeatedly in our subsequent discussion.
\begin{lem} \label{CE}
 Let $p$ a prime and $P$ a $p$-Sylow subgroup of $G$.\ For any $G$-module $M$ and positive integer $n$, if $H^n(P, M)$ is trivial, then so is $H^n(G, M)_p$.
\end{lem}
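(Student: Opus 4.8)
The plan is to run the standard restriction--corestriction (transfer) argument. The two ingredients I would invoke are the restriction homomorphism $\res\colon H^n(G,M)\to H^n(P,M)$ and the corestriction (transfer) homomorphism $\operatorname{cor}\colon H^n(P,M)\to H^n(G,M)$, together with the fundamental relation that their composite satisfies $\operatorname{cor}\circ\res = [G:P]\cdot\id$ on $H^n(G,M)$. These are available from any standard treatment of the cohomology of finite groups, in particular \cite{CE}, which is presumably the source the label of the Lemma points to.

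First I would record the two elementary facts that make the argument work. Since $G$ is finite and $n\geq 1$, the group $H^n(G,M)$ is annihilated by $|G|$ and is therefore torsion; hence its $p$-primary component $H^n(G,M)_p$ is well defined, consisting of those classes killed by some power of $p$. Because $P$ is a Sylow $p$-subgroup, the index $[G:P]$ is coprime to $p$, so multiplication by $[G:P]$ acts as an \emph{automorphism} of the $p$-primary group $H^n(G,M)_p$.

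Next I would feed in the hypothesis. If $H^n(P,M)=0$, then $\res$ is the zero map simply because its target vanishes, and consequently $\operatorname{cor}\circ\res=0$ on all of $H^n(G,M)$. Comparing with the relation above, multiplication by $[G:P]$ is the zero endomorphism of $H^n(G,M)$, and in particular of $H^n(G,M)_p$. But on $H^n(G,M)_p$ this same map is an automorphism by the previous paragraph; an endomorphism that is simultaneously zero and invertible forces $H^n(G,M)_p=0$, which is exactly the assertion.

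I do not expect any genuine obstacle here, as this is essentially a textbook transfer argument; the only points demanding a line of care are confirming that the higher cohomology of a finite group is torsion, so that the $p$-primary component is meaningful, and that an integer coprime to $p$ acts invertibly on a $p$-primary torsion group. Both are routine, and one could equally well dispatch the Lemma by a direct citation to \cite{CE}.
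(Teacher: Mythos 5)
Your argument is correct and is essentially the paper's proof: the paper simply cites \cite[XII Theorem 10.1]{CE} for the injectivity of $\res\colon H^n(G,M)_p\to H^n(P,M)$, a fact whose standard proof is exactly the restriction--corestriction computation you carry out. You have merely unpacked that citation into a self-contained transfer argument, so the two proofs coincide in substance.
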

\begin{proof}
 By \cite[XII Theorem 10.1]{CE}, the restriction map $\res : H^n(G, M)_p \to H^n(P, M)$ is injective.\ Thus, the Lemma follows immediately.
\end{proof}

\begin{lem}\label{plocalred}  Let $p$ be a prime and let $\w$ be a 3-cocycle of $G$.\ Suppose that $A \subseteq Z_\w(G)$ is a $p$-subgroup and let $P$ be a Sylow $p$-subgroup of $G$.
\begin{enumerate}[label=\rm{(\roman*)}, leftmargin=*]
 \item Assume that the restriction map
$\widehat{G}_p\rightarrow \widehat{P}$ is a split injection.\ Then in order for the exact sequence
\begin{equation}\label{eq:0}
 1 \to \widehat G \to \G^\w_0(G, A) \to A \to 1
\end{equation} 
to split, it is sufficient that 
\begin{equation}\label{eq:1}
1 \to \widehat{P} \to \G^{\w_P}_0(P, A) \to A \to 1
\end{equation} 
is split exact.
\item The
condition that $\widehat{G}_p\rightarrow \widehat{P}$ is a split injection is satisfied if $p=2$ and $P$ contains a unique involution.
\end{enumerate}
\end{lem}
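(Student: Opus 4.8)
The plan is to reduce both parts to the vanishing of a single extension class and then exploit that $A$ is a $p$-group. Since the elements of $\G_0^\w(G)$ are central in $D^\w(G)$ they commute, so $\G_0^\w(G,A)$ is abelian and the sequence \eqref{eq:0} is a central extension of abelian groups with trivial $A$-action; it therefore splits if and only if its class $c\in H^2(A,\widehat G)$ vanishes. Because $A$ is a $p$-group, writing $\widehat G=\widehat G_p\oplus\widehat G_{p'}$ gives $H^2(A,\widehat G)=H^2(A,\widehat G_p)\oplus H^2(A,\widehat G_{p'})$, and the second summand vanishes since $\gcd(|A|,|\widehat G_{p'}|)=1$; hence $c$ lies in $H^2(A,\widehat G_p)$ automatically, independently of any hypothesis.

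For part (i), I would apply Proposition \ref{p:subgroup} with $H=P$. The resulting morphism of extensions is the identity on the quotient $A$ and the restriction $\res$ on the kernels, so the bottom sequence is the pushforward of the top along $\res$, and the class of \eqref{eq:1} equals $\res_*(c)$, where $\res_*\colon H^2(A,\widehat G_p)\to H^2(A,\widehat P)$ is induced by $\res$. By hypothesis \eqref{eq:1} splits, so $\res_*(c)=0$. Since $\res\colon\widehat G_p\to\widehat P$ is a split injection of (trivial) $A$-modules, $\widehat G_p$ is a direct summand of $\widehat P$, and applying the additive functor $H^2(A,-)$ shows $\res_*$ is a split injection, in particular injective; this forces $c=0$, so \eqref{eq:0} splits.

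For part (ii) I would invoke the classical theorem that a $2$-group with a unique involution is either cyclic or generalized quaternion, and treat the two cases. By Lemma \ref{CE} the restriction $\widehat G_2\to\widehat P$ is already injective, so the only issue is to produce a retraction. If $P\cong Q_{2^n}$ is generalized quaternion then $\widehat P\cong\BZ_2\times\BZ_2$ is elementary abelian, every subgroup is a direct summand, and the injection splits for free. If $P$ is cyclic then $\Aut(P)$ is a $2$-group, which forces $N_G(P)=C_G(P)$; Burnside's normal $p$-complement theorem then yields $G=K\rtimes P$ with $|K|$ odd, whence $G'\subseteq K$, so $P\cap G'=1$, $\widehat G_2\cong\widehat P$, and $\res$ is an isomorphism. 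In either case $\widehat G_2\to\widehat P$ is a split injection.

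The cyclic case is the real obstacle: there an injection into the cyclic group $\widehat P$ need not split as an abstract subgroup inclusion, so one must genuinely show the restriction is surjective, which is exactly where Burnside's theorem (enabled by the abelianness of a cyclic $P$) does the work. I would also record a cleaner uniform alternative that sidesteps the case division entirely: the transfer identity $\operatorname{cor}\circ\res=[G:P]$ on $H^1(G,\BC^\times)=\widehat G$, combined with $[G:P]$ being odd, shows that $[G:P]^{-1}\operatorname{cor}$ retracts $\res$ on the $2$-group $\widehat G_2$, so $\widehat G_2\to\widehat P$ is a split injection for \emph{any} finite $G$, with the unique-involution hypothesis serving only to situate the lemma in the context of the Main Theorem.
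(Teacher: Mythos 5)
Your proposal is correct, and for the most part it travels the same road as the paper: part (i) rests on exactly the two inputs the paper uses, namely the morphism of extensions from Proposition \ref{p:subgroup} and the split injectivity of $\res:\widehat G_p\to\widehat P$. The difference is one of packaging: the paper chases an explicit retraction $\kappa\circ j\circ\pi_P\circ\iota_1$ through the diagram after first passing by hand to the $p$-torsion subgroup $\G^\w_0(G,A)_p$, whereas you argue at the level of extension classes, killing the $p'$-component of $H^2(A,\widehat G)$ by a coprimality argument and then using that $\res_*$ is (split) injective on $H^2(A,\widehat G_p)$; this is slightly cleaner but not essentially different. Part (ii) reproduces the paper's case division verbatim (generalized quaternion: $\widehat P\cong\BZ_2\times\BZ_2$, so every subgroup is a summand; cyclic: $N_G(P)=C_G(P)$ since $\Aut(P)$ is a $2$-group, then Burnside gives $G=Q\rtimes P$ and $\widehat G_2\cong\widehat P$). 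The one genuine divergence is your closing transfer remark, which is correct and strictly stronger than part (ii): since $\operatorname{cor}\circ\res=[G:P]\cdot\operatorname{id}$ on $H^1(G,\BC^\times)$ and $[G:P]$ is prime to $p$, the map $m\cdot\operatorname{cor}$ (with $m[G:P]\equiv 1$ modulo the exponent of $\widehat G_p$) retracts $\res$ on $\widehat G_p$, so the split-injection hypothesis of (i) holds for \emph{every} finite group and every prime. This subsumes the case analysis entirely and shows the unique-involution hypothesis plays no role in part (ii); what the paper's route buys in exchange is that it stays within the elementary group theory (Burnside, structure of $2$-groups with one involution) already needed elsewhere, avoiding the corestriction map.
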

\begin{proof}  First note that $A \subseteq P$.\ We also point out that the restriction map $\widehat{G}_p\rightarrow \widehat{P}$ is \textit{always} an injection.\ This follows from the same Theorem of \cite{CE} cited above, once we remember that $H^1(G, \BC^\times)$ is naturally isomorphic to $\widehat{G}$.
 By Proposition \ref{p:subgroup} there is a commuting
diagram of exact sequences
 \begin{equation*}
 \xymatrix{
 1 \ar[r] & \widehat{G}\ar[d]^-{\res} \ar[r]^-{\iota_0} &\G^\w_0(G, A) \ar[d]^-{\pi_P} \ar[r]^-{p} & A
 \ar[d]^-{\id} \ar[r] & 1\\
 1 \ar[r] & \widehat{P} \ar[r]^-{\iota_2} & \G^{\w_P}_0(P, A) \ar[r]^-{p} & A\ar[r] & 1\,.
 } 
 \end{equation*}
 Since $\res : \widehat{G}_p \rightarrow \widehat{P}$ is injective and $A$ is a $p$-group, the restriction $\pi_P : \ \G^{\w}_0(G, A)_p \to \G^{\w_P}_0(P, A)$ is also injective. Thus, we have the row exact commutative diagram:
 \begin{equation}\label{eq:c1}
 \xymatrix{
 1 \ar[r] & \widehat{G}_p\ar[d]^-{\res} \ar[r]^-{\iota_1} &\G^\w_0(G, A)_p \ar[d]^-{\pi_P} \ar[r]^-{p} & A
 \ar[d]^-{\id} \ar[r] & 1\\
 1 \ar[r] & \widehat{P} \ar[r]^-{\iota_2} & \G^{\w_P}_0(P, A) \ar[r]^-{p} & A\ar[r] & 1\,.
 }
 \end{equation}
 The exact sequence \eqref{eq:0} splits if, and only if, the top row of \eqref{eq:c1} is split exact. 
 \ Thus, if the exact sequence \eqref{eq:1} splits then there exists group homomorphism $j: \G^{\w_P}_0(P, A) \to \widehat{P}$ such that $j \circ \iota_2 =\id_{\widehat{P}}$. Since $\res : \widehat{G}_p \to \widehat{P}$ is assumed to be split injective, there exists a group homomorphism $\kappa: \widehat{P} \to \widehat{G}_p$ such that $\kappa \circ \res = \id_{\widehat{G}_p}$. One can verify directly that $\kappa\circ j\circ \pi_P\circ \iota_1 = \id_{\widehat{G}_p}$, and so the top of \eqref{eq:c1} splits.\ This completes the proof of part (i).
 
 \medskip
 As for part (ii), since $P$ is a $2$-group with a unique involution then $P$ is either cyclic or generalized quaternion.\ As pointed out at the beginning of the proof, we only have to establish the splitting condition.\ We deal with the two cases separately.\ If $P$ is generalized quaternion then $\widehat{P}\cong \BZ_2\times \BZ_2$.\ So in this case, every subgroup splits and we are done.\ 
 
 \medskip
 Now suppose that $P$ is cyclic.\ Here, it is a standard
 consequence of Burnside's normal $p$-complement Theorem \cite[Chapter 7, Theorem 4.3]{G} that $G$ is a semi-direct product $G=Q \rtimes P$ where $Q$ has odd order.\ This makes it clear that $\widehat{G}_2\cong \widehat{P}$ and in particular the splitting condition again holds.\\
 (To verify the conditions needed for Burnside's Theorem (loc.\ cit.) it suffices to show that $N_G(P)=C_G(P)$.\ To see this, notice that $N_G(P)/C_G(P)$ is a faithful group of automorphisms of $P$ having odd order.\ But $P$ is a cyclic $2$-group whence $\Aut(P)$ is a $2$-group. This means that $N_G(P)/C_G(P)$ is trivial, as required.)
\end{proof}

%%%%%%%%%%%%%%%%%%%%%%%%%%%%%%%%%
\section{$\w$-admissibility}
In this Section, $\w$ a normalized $3$-cocycle on $G$.

\subsection{Admissible cocycles}
Suppose that $A \subseteq Z(G)$ is a subgroup.\ It is established in \cite{MN2} that in order for $D^{\w}(G, A)$ to
be a quasi-Hopf algebra,
it is necessary and sufficient that the following two conditions hold:
\begin{eqnarray}
&&(a)\ A \subseteq Z_{\w}(G), \label{QHAconds}\\
&&(b)\ 1\rightarrow \widehat{G}\rightarrow \G_0^{\w}(G, A)\rightarrow A\rightarrow 1 \ \mbox{splits}.\notag
\end{eqnarray}
Additional conditions are required in order for
$D^{\w}(G, A)$ to be modular, and we defer discussion of this until Section \ref{Smodularity}.\ Following \cite{MN2}, if (a) and
(b) hold we
say that $A$ is $\w$-\emph{admissible}. 

\medskip
In this case, there exist $\t := \{\t_a\}_{a \in A} \subseteq C^1(G, \BC^\times)$ and $\nu \in C^1(A, \widehat{G})$ such that 
$\delta \t_a = \theta_a$ for all $a \in A$ and $\delta \nu = \b_\t$. We will simply call $(\t, \nu)$ an \emph{$\w$-admissible pair} for $A$.\
In general, there is more than one $\w$-admissible pair for an $\w$-admissible subgroup $A$.\ Any one of them can be used to construct a quasi-Hopf algebra $D^\w(G,A)$.

\medskip
If $A \subseteq H$ is a subgroup of $G$, then it is easily seen that $A$ is also $\w_H$-admissible, where $\w_H := \res_H^G(\w)$.\ Moreover, 
if $(\t, \nu)$ is an $\w$-admissible pair for $A$, then 
$(\t_H, \nu_H)$ is an $\w_H$-admissible pair for $A$, where $\t_{H,a} = \res_H^G(\t_a)$ and $\nu_H(a) = \res_H^G(\nu(a))$ for all $a \in A$.\ We say that $D^{\w_H}(H,A)$ is the generalized twisted double \emph{induced} from $D^\w(G,A)$.

%%%%%%%%%%%%%%%%%%%%%%%%%%%%%%%%%%%%

\subsection{Admissibility and groups with one involution}
The main result of this Subsection is the next result.

\begin{thm}\label{thmwadm} If $G$ has a \emph{unique} subgroup $A$
of order $2$, then $A$ is $\w$-admissible for all $\w\in H^3(G, \BC^\times)$.
\end{thm}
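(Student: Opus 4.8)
The plan is to establish the two conditions (a) and (b) defining $\w$-admissibility in \eqref{QHAconds} for the unique order-$2$ subgroup $A=\langle z\rangle$. Since $A$ is the only involution it is characteristic, hence normal, and a normal subgroup of order $2$ lies in $Z(G)$; so $z$ is central and the containment $A\subseteq Z(G)$ is free.

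For part (a) I must show $z\in Z_\w(G)$, i.e.\ $[\gamma_z]=0$ in $H^2(G,\BC^\times)$. Fixing $x,y\in Z(G)$ and letting $g,h$ range over $G$ in \eqref{thetagammaid} (so that $g^x=g$ and $h^x=h$) yields $\gamma_{xy}=\gamma_x\,\gamma_y\,\delta\lambda$, where $\lambda(g):=\theta_g(x,y)$; thus $x\mapsto[\gamma_x]$ is a homomorphism $Z(G)\to H^2(G,\BC^\times)$. In particular $[\gamma_z]^2=[\gamma_{z^2}]=1$, so $[\gamma_z]$ is $2$-torsion. The Sylow $2$-subgroup $P$ is cyclic or generalized quaternion, and in either case has trivial Schur multiplier $H^2(P,\BC^\times)=0$; Lemma \ref{CE} then gives $H^2(G,\BC^\times)_2=0$, forcing $[\gamma_z]=0$. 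Hence $A\subseteq Z_\w(G)$.

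For part (b) I invoke Lemma \ref{plocalred}: as $P$ has a unique involution, hypothesis (ii) holds and it suffices to split \eqref{eq:1} for $P$. Since $\G_0^{\w_P}(P,A)$ is abelian and $A\cong\BZ_2$ (acting trivially on $\widehat P$), the extension class lies in $H^2(A,\widehat P)\cong\widehat P\otimes\BZ_2$, and a splitting is equivalent to the vanishing of $\beta(z,z)$ modulo squares in $\widehat P$. The crucial simplification is that $\beta(z,z)$ is in fact a \emph{character}: putting $g=h=z$ in \eqref{thetagammaid} gives $\theta_z^{\,2}=\delta(\psi^{-1})$ with $\psi(x):=\gamma_x(z,z)$, whence $\mu:=\tau_z^{\,2}\psi\in\widehat G$; and because $\theta_g(z,z)=\gamma_g(z,z)$ for every $g$ (the cocycles $\theta_g,\gamma_g$ agree on $C(g)\ni z$), formula \eqref{eq:beta} collapses to $\beta(z,z)=\mu$. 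By Proposition \ref{p:1} the resulting class $[\mu]\in\widehat P\otimes\BZ_2$ depends only on $[\w]$, and condition (b) is precisely the assertion $[\mu]=0$.

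It remains to prove $[\mu]=0$, and here the two cases diverge. If $P$ is cyclic then $\widehat P\otimes\BZ_2\cong\BZ_2$ is detected by the value $\mu(z)$; a short computation gives $\mu(z)=\w(z,z,z)^2$, and evaluating the $3$-cocycle identity at $(z,z,z,z)$ with $z^2=1$ forces $\w(z,z,z)^2=1$, so $\mu(z)=1$ and \eqref{eq:1} splits. If $P$ is generalized quaternion then $\widehat P\cong\BZ_2\times\BZ_2$ and, since $z\in[P,P]$, the value $\mu(z)=1$ is automatic but carries no information; one must instead show that $\mu$ is trivial on the standard generators $a,b$. This generalized quaternion case is the step I expect to be the main obstacle: it is not settled by restriction to cyclic subgroups (which only reproduces $\mu(z)=1$) and genuinely uses the relation $bab^{-1}=a^{-1}$. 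Since $H^3(P,\BC^\times)\cong\BZ_{2^n}$ is cyclic, the homomorphism $[\w]\mapsto[\mu]$ into $\widehat P\otimes\BZ_2$ has cyclic image of order at most $2$, so it suffices to evaluate $[\mu]$ on a single generator of $H^3(P,\BC^\times)$; carrying this out—presumably via an explicit cocycle representative on $Q_{2^n}$, possibly aided by a shear automorphism $b\mapsto ab$ to handle $\mu(a)$—is where I expect the real difficulty to lie.
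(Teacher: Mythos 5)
Your treatment of condition (a) is correct and matches the paper's: the multiplicativity of $x\mapsto[\gamma_x]$ on $Z(G)$ extracted from \eqref{thetagammaid} is a mild variant of Lemma \ref{lemma2.3}, and the conclusion via triviality of $H^2(P,\BC^\times)$ for $P$ cyclic or generalized quaternion together with Lemma \ref{CE} is exactly Corollary \ref{cora}. Likewise, your reduction of condition (b) to the Sylow $2$-subgroup via Lemma \ref{plocalred}, the identification of the obstruction with the class of the character $\b(z,z)=\mu\in\widehat{P}$ modulo squares, and your resolution of the cyclic case by evaluating $\mu(z)=\w(z,z,z)^2=1$ are all sound; in fact the cyclic case is handled more directly than in the paper, which instead quotes structural results on $\G_0^\w(G)$ for abelian $D^\w(G)$ from \cite{MN1}.

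However, the generalized quaternion case is a genuine gap, and you have correctly diagnosed it as the crux: since $\widehat{P}\cong\BZ_2\times\BZ_2$ has no nontrivial squares, you must show $\mu(r)=\mu(s)=1$ outright, and nothing you have written accomplishes this. Your remark that the image of $[\w]\mapsto[\mu]$ is cyclic of order at most $2$ does not help, because the theorem is precisely the assertion that this image is trivial, so one must still compute $[\mu]$ for a generator of $H^3(P,\BC^\times)$. The paper's device (Theorem \ref{thmsplit1}, Steps (I)--(III)) is to embed $P=\langle r,s\rangle\cong Q_{4n}$ into the larger generalized quaternion group $E=\langle\ol r,\ol s\rangle\cong Q_{8n}$ with $r=\ol r^2$ and $s=\ol s$; by Artin--Tate the restriction $H^3(E,\BC^\times)\to H^3(P,\BC^\times)$ is surjective for groups with periodic cohomology, so $\w$ lifts to $\ol\w$ on $E$, and the corresponding character $\ol\b(z,z)\in\widehat{E}$ has order dividing $\exp(\widehat{E})=2$, hence is trivial on all squares --- in particular on $r=\ol r^2$. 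Proposition \ref{p:1} then transfers this to $\b(z,z)(r)=1$, and the same trick applied to the subgroup $\langle s,r^{n/2}\rangle\cong Q_8$ gives $\b(z,z)(s)=1$. Without this step (or an equivalent explicit cocycle computation on $Q_{2^n}$, which you only gesture at), the proof is incomplete.
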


\medskip
We must show that (a) and (b) hold.\ To this end, we
establish some preliminary results of independent interest.

\begin{lem}\label{lemma2.3} Let $n \geq 0$ be an integer and let $g \in G$.\ Then
\begin{eqnarray}\label{resform}
\res_{C(g)}^{C(g^n)} [\theta_{g^n}] = [\theta_g]^n.
\end{eqnarray}
In particular, the order of $[\theta_g] \in H^2(C(g), \BC^{\times})$ divides the order
of $g$.
\end{lem}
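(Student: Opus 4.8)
The plan is to establish the multiplicative behaviour of the classes $[\theta_{g^k}]$ under restriction to $C(g)$ and then conclude by induction on $n$. First I would note that $C(g) \subseteq C(g^k)$ for every $k$, since an element commuting with $g$ commutes with all its powers; hence $\res_{C(g)}^{C(g^n)}$ is defined, and by \eqref{thetaid} the restriction of each $\theta_{g^k}$ to $C(g)$ lies in $Z^2(C(g), \BC^\times)$. The base case $n = 0$ is immediate: $g^0 = 1$ and normalization of $\w$ gives $\theta_1 = 1$, so both sides of \eqref{resform} are trivial.

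The key step is the identity \eqref{thetagammaid}. I would apply it with the subscript element $g$ replaced by $g^m$ and the element $h$ replaced by $g^n$, and with the arguments $x, y$ restricted to $C(g)$. Because such $x, y$ centralize $g$, they centralize every power of $g$, so $(g^m)^x = g^m$ and $(g^n)^x = g^n$; the term $\gamma_y((g^m)^x,(g^n)^x)$ therefore collapses to $\gamma_y(g^m, g^n)$. After rearranging, \eqref{thetagammaid} becomes, for all $x, y \in C(g)$,
\[
\theta_{g^{m+n}}(x,y) = \theta_{g^m}(x,y)\,\theta_{g^n}(x,y)\,\frac{\gamma_x(g^m,g^n)\,\gamma_y(g^m,g^n)}{\gamma_{xy}(g^m,g^n)}.
\]

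Next I would observe that $c(x) := \gamma_x(g^m, g^n)$ is a $\BC^\times$-valued function on $C(g)$, i.e.\ a $1$-cochain for the trivial action, and that the last factor above is exactly $(\delta c)(x,y)$. Hence the cocycles $\theta_{g^{m+n}}$ and $\theta_{g^m}\theta_{g^n}$ differ by a coboundary on $C(g)$, giving
$\res_{C(g)}^{C(g^{m+n})}[\theta_{g^{m+n}}] = \res_{C(g)}^{C(g^m)}[\theta_{g^m}]\cdot \res_{C(g)}^{C(g^n)}[\theta_{g^n}]$ in $H^2(C(g), \BC^\times)$. Taking $m = 1$ and inducting on $n$ (using $\res_{C(g)}^{C(g)}[\theta_g] = [\theta_g]$) yields \eqref{resform}. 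For the final assertion, I would set $n = \ord(g)$: then $g^n = 1$ and $\theta_1 = 1$, so $[\theta_g]^{\ord(g)}$ is trivial and the order of $[\theta_g]$ divides $\ord(g)$.

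The only delicate point is verifying that the three $\gamma$-factors assemble \emph{precisely} into the coboundary $(\delta c)(x,y)$; this relies on the simplification $(g^k)^x = g^k$, which holds exactly because $x, y$ are taken in $C(g)$ rather than in the larger group $C(g^n)$. Everything else — the compatibility of the restriction maps $\res_{C(g)}^{C(g^k)}$ and the induction — is routine bookkeeping.
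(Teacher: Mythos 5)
Your proof is correct and follows essentially the same route as the paper: both rest on specializing the identity \eqref{thetagammaid} to $x,y \in C(g)$, recognizing the resulting ratio of $\gamma$-terms as the coboundary of the $1$-cochain $x \mapsto \gamma_x(g^m,g^n)$, and then inducting on $n$ (the paper works directly with $m=1$, whereas you state the general $(m,n)$ identity first before specializing, which is only a cosmetic difference). The concluding step, taking $n=\ord(g)$ and using $\theta_1=1$, is also identical.
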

\begin{proof} We prove (\ref{resform}) by induction on $n$.\ Because $\w$ is normalized then
$\theta_1 = 1$, whence the case $n = 0$ is clear.

\medskip
Take $h := g^n$ and $x, y \in C(g)$ in (\ref{thetagammaid}) to obtain
\begin{eqnarray*}
\theta_g(x, y)\theta_{g^n}(x, y)\gamma_x(g, g^n)\gamma_y(g, g^n) = \theta_{g^{n+1}}(x, y)\gamma_{xy}(g, g^n).
\end{eqnarray*}
Defining a $1$-cochain $f$ on $C(g)$ as
$f(x) := \gamma_x(g, g^n)$, the last display just says that
\begin{eqnarray*}
\theta_g(x, y)\theta_{g^n}(x, y) \delta f(x, y) = \theta_{g^{n+1}}(x, y).
\end{eqnarray*}
Using the inductive hypothesis, it then follows that
\begin{eqnarray*}
\res_{C(g)}^{C(g^{n+1})} [\theta_{g^{n+1}}] = [\theta_g] \res_{C(g)}^{C(g^n)} [\theta_{g^n}] = 
[\theta_g] [\theta_g]^n = [\theta_g]^{n+1}.
\end{eqnarray*}
This completes the proof of (\ref{resform}).\
To prove the last statement of the Lemma, take $n$ to be the order of $g$.\ Because $\theta_1 = 1$
we conclude from (\ref{resform}) that $[\theta_g]^n = 1$, as required.
\end{proof}

\medskip
We can now prove
\begin{thm}\label{thmZw} If $B \subseteq Z(G)$ is a subgroup that satisfies $\gcd(|B|, |H^2(G, \BC^{\times})|) = 1$, then
 $B \subseteq Z_{\omega}(G)$.
\end{thm}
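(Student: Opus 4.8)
The plan is to reduce the statement to an element-by-element divisibility computation, with Lemma \ref{lemma2.3} as the single substantive input. Since $Z_\w(G)$ is a subgroup of $Z(G)$ and $B \subseteq Z(G)$, it suffices to show that every $g \in B$ lies in $Z_\w(G)$; the containment $B \subseteq Z_\w(G)$ then follows immediately. So fix $g \in B$ and aim to prove $[\g_g] = 1$ in $H^2(G, \BC^{\times})$.

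First I would exploit the simplification available because $g$ is central: when $g \in Z(G)$ one has $C(g) = G$, and the remark preceding the definition of $Z_\w(G)$ tells us that $\theta_g$ and $\g_g$ agree on $C(g) = G$. Hence $[\g_g] = [\theta_g]$ as classes in $H^2(G, \BC^{\times})$, and membership of $g$ in $Z_\w(G)$ is equivalent to the vanishing of $[\theta_g]$ there. Next I would invoke Lemma \ref{lemma2.3}: since $C(g) = G$, the lemma places $[\theta_g]$ in $H^2(G, \BC^{\times})$ and asserts that its order divides $\ord(g)$. By Lagrange $\ord(g)$ divides $|B|$, so the order of $[\theta_g]$ divides $|B|$. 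On the other hand $[\theta_g]$ is an element of the finite abelian group $H^2(G, \BC^{\times})$, so its order also divides $|H^2(G, \BC^{\times})|$.

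The argument then closes with the coprimality hypothesis: the order of $[\theta_g]$ divides $\gcd(|B|, |H^2(G, \BC^{\times})|) = 1$, forcing $[\theta_g] = 1$ and hence $[\g_g] = 1$, so $g \in Z_\w(G)$. As this holds for every $g \in B$, we conclude $B \subseteq Z_\w(G)$. I do not anticipate a serious obstacle: the proof is essentially a two-line order/coprimality computation once Lemma \ref{lemma2.3} is in hand. The only point requiring care is the identification $[\g_g] = [\theta_g]$ for central $g$ together with the equality $C(g) = G$, which is precisely what allows the order bound of Lemma \ref{lemma2.3} to be read off directly in $H^2(G, \BC^{\times})$ rather than in the a priori smaller group $H^2(C(g), \BC^{\times})$.
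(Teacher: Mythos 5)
Your proposal is correct and follows essentially the same route as the paper: apply Lemma \ref{lemma2.3} to get that the order of $[\theta_g]$ in $H^2(G,\BC^{\times})$ divides $\ord(g)$, and conclude by the coprimality hypothesis that $[\theta_g]=1$. The only difference is that you spell out explicitly the identification $[\g_g]=[\theta_g]$ for central $g$ via $C(g)=G$, which the paper leaves implicit (having noted the coincidence of $\theta_g$ and $\g_g$ on $C(g)$ just before defining $Z_\w(G)$).
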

\begin{proof} We have to show that every $g \in B$ lies in $Z_{\w}(G)$.\
Because $B \subseteq Z(G)$, what we have to establish is that $\theta_g \in B^2(G, \BC^{\times})$.\
However, by Lemma \ref{lemma2.3} we know that the order of $[\theta_g] \in H^2(G, \BC^{\times})$
divides
the order of $g$.\ On the other hand, the hypothesis of the Theorem implies that the order of
$[\theta_g]$ is \emph{coprime} to the order of $g$.\ Therefore $[\theta_g]$ has order $1$, which
means
exactly that $\theta_g \in B^2(G, \BC^{\times})$.\ This completes the proof of Theorem \ref{thmZw}.
\end{proof}
\begin{cor}\label{cora} If $G$ has a unique subgroup $A$ of prime order $p$ and $A \subseteq Z(G)$,
then $A \subseteq Z_{\w}(G)$
\end{cor}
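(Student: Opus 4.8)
The plan is to deduce Corollary \ref{cora} directly from Theorem \ref{thmZw}, so the real content is checking that the hypotheses of the Theorem are met by the subgroup $A$. First I would observe that since $A$ has prime order $p$, we have $|A| = p$, and $A \subseteq Z(G)$ by assumption, so condition $B \subseteq Z(G)$ of Theorem \ref{thmZw} holds with $B = A$. It then remains to verify the coprimality condition $\gcd(|A|, |H^2(G, \BC^{\times})|) = 1$, i.e. that $p$ does not divide the order of the Schur multiplier $H^2(G, \BC^{\times})$.

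The key step is therefore to show that $p \nmid |H^2(G, \BC^{\times})|$, and this is where the uniqueness hypothesis on $A$ must be used. The plan is to invoke Lemma \ref{CE} with $n = 2$: if $P$ is a Sylow $p$-subgroup of $G$ and $H^2(P, \BC^{\times})$ is trivial, then the $p$-part $H^2(G, \BC^{\times})_p$ is trivial, which is exactly the statement that $p \nmid |H^2(G, \BC^{\times})|$. So I would reduce to computing the Schur multiplier of $P$. Since $A$ is the \emph{unique} subgroup of order $p$ in $G$ and $A \subseteq Z(G) \subseteq P$ (after conjugating so $A \subseteq P$), the Sylow $p$-subgroup $P$ contains a unique subgroup of order $p$. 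A $p$-group with a unique subgroup of order $p$ is, by the standard classification, either cyclic or (when $p = 2$) generalized quaternion. In the cyclic case the Schur multiplier is trivial, and in the generalized quaternion case it is also trivial. Either way $H^2(P, \BC^{\times}) = 1$.

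I expect the main obstacle to be the bookkeeping around the uniqueness of the order-$p$ subgroup: one must check that the uniqueness of $A$ in $G$ really forces a unique order-$p$ subgroup inside a Sylow $P$, and then correctly cite the group-theoretic fact that such $P$ is cyclic or generalized quaternion. The triviality of the Schur multiplier in both cases is classical but should be stated cleanly, since the generalized quaternion case is slightly less obvious than the cyclic one. Once $H^2(P, \BC^\times) = 1$ is in hand, Lemma \ref{CE} and Theorem \ref{thmZw} combine immediately to give $A \subseteq Z_\w(G)$, completing the argument with no further computation.
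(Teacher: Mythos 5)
Your proof is correct and follows essentially the same route as the paper: reduce to a Sylow $p$-subgroup $P$ via Lemma \ref{CE}, observe that $P$ inherits the unique-subgroup-of-order-$p$ property and is therefore cyclic or generalized quaternion with trivial Schur multiplier, and then apply Theorem \ref{thmZw}. The only blemish is the incidental claim $Z(G)\subseteq P$, which is false in general; what you actually need (and correctly use) is just that $A\subseteq P$ and that uniqueness of $A$ in $G$ forces $P$ to have a unique subgroup of order $p$.
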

\begin{proof}
Because $G$ has a unique subgroup of prime order $p$ then a Sylow $p$-subgroup $P$ of $G$ has the
same property,
and it is well-known (\cite[Theorem 4.10(ii)]{G}) that in this situation either $P$ is cyclic or
$p = 2$ and $P$ is generalized quaternion.\ In either case, $H^2(P, \BC^{\times})$ is trivial.\ (This holds whenever $P$ has periodic cohomology.)\ It follows from Lemma \ref{CE} that $H^2(G, \BC^\times)_p = 1$, and the Corollary then follows
from Theorem \ref{thmZw}.
\end{proof}

\medskip
Having dealt with condition (\ref{QHAconds})(a), we turn to the question of the splitting of the sequence in (\ref{QHAconds})(b).\
We will prove

\begin{thm}\label{thmsplit1} If $G$ has a \emph{unique} subgroup $A$ of order $2$ then the
short exact sequence
$1 \to \widehat G \to \G^\w_0(G, A) \to A \to 1$ splits.
\end{thm}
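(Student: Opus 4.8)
The plan is to reduce the splitting of the global sequence $1 \to \widehat{G} \to \G^\w_0(G,A) \to A \to 1$ to a local computation at a Sylow $2$-subgroup $P$, using the machinery of Lemma \ref{plocalred}. Since $A$ has order $2$, it is a $2$-group, and by Corollary \ref{cora} we already know $A \subseteq Z_\w(G)$, so the sequence is defined. Part (ii) of Lemma \ref{plocalred} tells us that the restriction map $\widehat{G}_2 \to \widehat{P}$ is a split injection precisely because $P$ (being a $2$-group with a unique involution) is either cyclic or generalized quaternion. Therefore, by part (i) of that Lemma, it suffices to prove that the \emph{local} sequence
\begin{equation*}
1 \to \widehat{P} \to \G^{\w_P}_0(P, A) \to A \to 1
\end{equation*}
splits, where $\w_P = \res_P^G(\w)$. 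This replaces an arbitrary group $G$ by one of just two explicit families, which is the decisive simplification.

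Next I would dispose of the two cases for $P$ separately. Since $A$ has order $2$ and $\widehat{P} = H^1(P,\BC^\times)$ is abelian, a section of $p : \G^{\w_P}_0(P,A) \to A$ exists if and only if $\G^{\w_P}_0(P,A)$ contains an element of order $2$ mapping to the nontrivial element $z \in A$; equivalently, the extension is classified by the $2$-cocycle $\b \in Z^2(A, \widehat{P})$ of \eqref{eq:beta}, and splitting amounts to the vanishing of $[\b]$ in $H^2(A, \widehat{P})$. When $P$ is generalized quaternion, one has $\widehat{P} \cong \BZ_2 \times \BZ_2$, and I would note that $H^2(\BZ_2, \BZ_2 \times \BZ_2)$ is a group of exponent $2$; more to the point, since $\exp(\widehat{P}) \mid 2$ here, Proposition \ref{p:1} already shows $\b$ is canonically defined, and one checks that the value $\b(z,z)$ is a square, hence trivial, so the sequence splits. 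When $P$ is cyclic, $\widehat{P}$ is cyclic of $2$-power order and the analysis of $\b(z,z) \in \widehat{P}$ is the crux.

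The heart of the matter, and the step I expect to be the main obstacle, is controlling the class $[\b] \in H^2(A, \widehat{P})$ in the cyclic case. The single obstruction is the self-pairing value $\b(z,z) \in \widehat{P}$, and splitting is equivalent to showing this character is a square in $\widehat{P}$. By \eqref{eq:beta} this value is built from $\theta_g(z,z)$ together with a coboundary correction coming from the $1$-cochains $\t_z$ satisfying $\delta \t_z = \theta_z$. The plan here is to exploit Lemma \ref{lemma2.3}: since $z$ is an involution, $\res_{C(z)}[\theta_z]$ has order dividing $2$, and in fact on the abelian group $P$ we have $C(z) = P$, so $[\theta_z] \in H^2(P,\BC^\times)$ has order dividing $2$. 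Because $P$ is cyclic (or generalized quaternion) it has \emph{trivial} Schur multiplier $H^2(P,\BC^\times)$, forcing $\theta_z$ itself to be a coboundary and letting us choose $\t_z$ genuinely; one must then track how the prescribed form of $\t_z$ feeds into $\b(z,z)$ and verify the resulting character is a perfect square. This is where the periodic-cohomology structure of $P$, rather than any property of the original $G$, does the real work, and assembling the cochain-level identity so that $\b(z,z)$ visibly lands in $\widehat{P}^{\,2}$ is the delicate computation that the localization was designed to make tractable.
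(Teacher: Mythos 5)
Your localization to a Sylow $2$-subgroup $P$ via Lemma \ref{plocalred} is exactly the paper's first step, and your reformulation of splitting as the vanishing of the single obstruction $\b(z,z)$ modulo squares in $\widehat{P}$ (i.e.\ in $H^2(A,\widehat{P})\cong \widehat{P}/\widehat{P}^2$) is correct. But the proposal stops precisely where the real work begins, in both cases. In the generalized quaternion case $\widehat{P}\cong\BZ_2\times\BZ_2$ has no nontrivial squares, so ``$\b(z,z)$ is a square'' just means $\b(z,z)=1$, and you give no argument for this; the remark that $H^2(\BZ_2,\BZ_2\times\BZ_2)$ has exponent $2$ is no help, since that group is nontrivial, and Proposition \ref{p:1} only tells you the value is well defined, not what it is. The paper's verification of $\b(z,z)=1$ is the genuinely hard step: it evaluates $\b(z,z)$ on the generators $r$ and $s$ separately by embedding $P$ (resp.\ a quaternion subgroup of order $8$ containing $s$) into a larger generalized quaternion group $E$ in which the relevant generator becomes a \emph{square}, uses the Artin--Tate surjectivity of $\res\colon H^3(E,\BC^\times)\to H^3(P,\BC^\times)$ to lift $\w$ to $E$, and then observes that $\ol{\b}(z,z)\in\widehat{E}$ has exponent $2$ and hence is trivial on squares; Proposition \ref{p:1} transports the conclusion back to $P$. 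Nothing in your sketch supplies a substitute for this mechanism.

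In the cyclic case you explicitly leave the evaluation of $\b(z,z)$ as ``the delicate computation'' still to be done, so the proposal is incomplete there by its own admission; the appeal to Lemma \ref{lemma2.3} and the vanishing of $H^2(P,\BC^\times)$ only re-establishes that $\t_z$ exists (condition (a) of admissibility), not that the resulting character is a square in $\widehat{P}$. The paper in fact sidesteps any cochain computation here: since $D^\w(P)$ is commutative, $\G_0^\w(P)$ is a direct sum of two nontrivial cyclic $2$-groups by \cite[Theorem 8.5]{MN1}, whereas a non-split extension would force $\G_0^\w(P,A)$, and hence $\G_0^\w(P)$, to be cyclic --- a contradiction. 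You would need either this structural input or a concrete evaluation of $\b(z,z)$ to close the gap.
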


\begin{remark} This result is generally \emph{false} if we replace $2$ by an \emph{odd} prime.
\end{remark}

\begin{proof}
After Lemma \ref{plocalred}, the Theorem will follow in general if we can prove it for a Sylow
$2$-subgroup of $G$.\ So for the remainder of the proof we assume that $G$ is a $2$-group.\ Thus, as before, $G$
is either \emph{cyclic}
or \emph{generalized quaternion}.

\medskip
Suppose first that $G$ is cyclic.\ Then $B^{\w} = Z_\w(G)= G$, so $D^{\w}(G)$ is abelian by \cite[Cor. 3.6]{MN1}.\ In particular, by Theorem 8.5 (loc.\ cit) it follows that
$\G^{\w}_0(G)$ is a direct sum of \emph{two} (nontrivial) cyclic $2$-groups.\ If the short exact sequence in the statement does not split, then $\G_0^\w(G,A)$ is a cyclic 2-group since $H^2(A, \widehat{G}) \cong \BZ_2$. This forces $\G_0^\w(G)$ to be cyclic, a contradiction.

\medskip
Now assume that $G$ is generalized quaternion.\ Thus
\begin{eqnarray*}
G = \langle r, s \mid r^n = s^2, s^4 = 1, srs\inv = r\inv\rangle
\end{eqnarray*}
where $|G| = 4n$ and $n > 1$ is a power of $2$.\ We have $Z(G)=\langle z\rangle = A$ where $z = s^2$, and
$\G_0^{\w}(G, A) = \G^{\w}_0(G)$ since $H^2(G, \BC^\times)$ is trivial.\ Moreover,
 $G/G'\cong \BZ_2\times \BZ_2$ is generated by $r G', s G'$.

 \medskip
 Let $\t_z$ be a fixed normalized 1-cochain such that $\delta \t_z = \g_z$. Then
 $$
 \b(z,z)(g) = \theta_g(z,z) \t_z (g)^2
 $$
 is a 2-cocycle in $Z^2(A, \widehat{G})$ associated to the extension $1 \to \widehat G \to \G^\w_0(G, A) \to 
 A \to 1$.

 \medskip
 Step (I).\ $\b(z,z)(r) = 1$.\ To prove this we introduce the group $E := \langle \ol r, \ol
 s\rangle$, which is a generalized quaternion group of order $8n$ in which $G$ is identified with
 the subgroup of index $2$ generated by $r:=\ol r^2, s:=\ol s$.\ Since $E$ has periodic
 cohomology, the Artin-Tate theory tells us that $\res_G^E : H^3(E, \BC^\times) \to H^3(G, \BC^\times)$ is an \emph{epimorphism}.\
 Hence, there exists $\ol \w \in Z^3(E, \BC^\times)$ such that $\ol \w_G = \w$.

\medskip
 For $g \in E$, let $\ol \theta_g$ and $\ol\g_g$ be functions associated with $\ol \w$ given by
 \eqref{eq:01} and \eqref{eq:02} and $\ol \t_z $ a normalized 1-cochain of $E$ such that $\delta
 \ol \t_z = \ol \g_{z}$.\ Then,
 $$
 \ol \b(z, z)(g) = \ol \theta_g(z, z) \ol \t_z(g)^2
 $$
 defines a 2-cocycle in $Z^2(A, \widehat {E})$.\ Since $\exp(\widehat E) = 2$ then $\ol \b(z, z)(g^2) = 1$ for
 all $g \in E$.\ In particular, $\ol \b(z, z)(r) = 1$.\ Note that $\delta \res^E_G(\ol \t_z) = \ol
 \g_{z}|_G = \g_{z}$.\ We then find that\
 $$
 1 = \ol \b(z, z)(r) = \ol \theta_r(z, z) \ol \t_z(r)^2 = \theta_r(z, z) \t_z(r)^2 = \b(z, z)(r)
 $$
 by Proposition \ref{p:1}.

 \medskip
 Step (II).\ $\b(z,z)(s) = 1$.\ Let $Q := \langle s, r^{n/2}\rangle \cong Q_8$.\ Applying the same
 argument as Step (I) to $Q$,
 we obtain $\b(z,z)(s) = 1$.

 \medskip
 Step (III).\ Since $\b(z,z)(s) = \b(z,z)(r) = 1$, then also $\b(z,z) = 1$.\ Therefore, the sequence in
 the statement of the Theorem splits, and the proof of the Theorem is complete.
\end{proof}

Combining Theorem \ref{thmsplit1} and the case $p = 2$ of Corollary \ref{cora} implies
Theorem \ref{thmwadm}.

%%%%%%%%%%%%%%%%%%%%%%%%%%%%%%%%%%%%%

\subsection{Admissibility and representation groups}\label{SSrepgp}
In this Subsection we consider $D^{\omega}(G, A)$ in the case that $G$ is a
\emph{perfect representation group}.\ The main result is Theorem \ref{thmE} below.\ It provides us with many cases when
$D^{\omega}(G, A)$ is a quasi-Hopf algebra, and in particular provides a somewhat different approach to some of the examples covered
by Theorem \ref{thmwadm}.\ We will discuss some of these in Subsection \ref{SSex}

\medskip
 Recall that $G$ is called \emph{perfect} if it coincides with its commutator subgroup,
$G = G'$.\ Alternatively, $G$ has only one (1-dimensional) character, i.e., $\widehat{G} = 1$.\ The theory of representation groups
was originally developed by Schur.\ An exposition can be found in \cite{CR}, $\S$11E.\ We develop some of the background that we need.

\medskip
\medskip
For a finite group $G$, a central extension $E$ of $G$
\begin{equation}\label{unicentext}
 1 \to A \to E \xrightarrow{p} G\to 1
\end{equation}
 is called a \emph{stem extension} if $E$ is finite and $A \subseteq E'$. There exists a stem extension $E$ with the largest order, and $E$ is called a \emph{representation group or Schur covering group} of $G$. In this case, 
$$
A = H^2(G, \BC^{\times}).
$$
It is easy to see that a representation group $E$ of $G$ is perfect if, and only if, $G$ is perfect.
 In particular, if $G$ is perfect then it has a unique representation group (up to isomorphism) and the representation group is also perfect.

\medskip
The following result is more-or-less implicit in the presentation of \cite{CR}, and in any case it is well-known.
\begin{prop}\label{propCR} Suppose that $E$ is the representation group of a perfect finite group $G$.\ Then $H^2(E, \BC^{\times}) = 1$.
\end{prop}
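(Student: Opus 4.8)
The plan is to exploit the universal property of the representation group together with the fact, established just now, that $E$ is again perfect. The statement $H^2(E,\BC^\times)=1$ says precisely that $E$ admits no nontrivial stem extensions, i.e.\ that $E$ is its own representation group. So the cleanest route is to argue by contradiction: suppose $1\to B\to \widetilde E\xrightarrow{q} E\to 1$ is a stem extension of $E$ with $B\cong H^2(E,\BC^\times)$ of order $>1$, and derive that $G$ would then possess a stem extension strictly larger than $E$, contradicting the maximality of $E$ among stem extensions of $G$.

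First I would record the setup: write $1\to A\to E\xrightarrow{p} G\to 1$ for the given representation group, with $A=H^2(G,\BC^\times)\subseteq E'$, and note (as the excerpt already states) that $E$ is perfect since $G$ is. Then I would take a hypothetical representation group $\widetilde E$ of $E$, so that $B:=H^2(E,\BC^\times)\subseteq \widetilde E{}'$ and $\widetilde E$ is perfect as well. The key step is to compose the two central extensions: set $r:=p\circ q:\widetilde E\to G$ with kernel $K=q^{-1}(A)$. I would check that $K$ is central in $\widetilde E$ — this is where perfectness of $\widetilde E$ does the work, since a perfect central extension has the property that the kernel of any surjection onto a further central extension is central (equivalently, $K/B\cong A$ is central in $E$ and $B$ is central in $\widetilde E$, and one promotes this to centrality of $K$ using $\widetilde E=\widetilde E{}'$). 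Then I would verify $K\subseteq \widetilde E{}'$, again using perfectness, so that $1\to K\to\widetilde E\xrightarrow{r} G\to 1$ is a stem extension of $G$.

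The contradiction is then a counting argument: $|K|=|A|\cdot|B|$, so if $|B|>1$ the stem extension $\widetilde E$ of $G$ has order strictly larger than $|E|$, contradicting that $E$ has the largest order among stem extensions of $G$. Hence $B=H^2(E,\BC^\times)=1$.

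The main obstacle I anticipate is the bookkeeping in the key step, namely verifying that the composite $1\to K\to\widetilde E\to G\to 1$ is genuinely a \emph{stem} extension — that $K$ is central in $\widetilde E$ and contained in $\widetilde E{}'$. Centrality is the more delicate of the two: $B$ is central in $\widetilde E$ and $A$ is central in $E$, but centrality of the extension $K$ in $\widetilde E$ does not follow formally and requires invoking perfectness of $\widetilde E$ (a standard fact is that for a perfect group every central-by-central extension is again central, because the relevant commutator maps factor through the abelianization and vanish). The containment $K\subseteq\widetilde E{}'$ similarly reduces, via the perfectness of $E$ and $\widetilde E$ and the inclusions $A\subseteq E'$, $B\subseteq\widetilde E{}'$, to routine commutator-subgroup manipulations. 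Everything else is the maximality/order comparison, which is immediate once the stem property is in hand.
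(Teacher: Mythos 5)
Your argument is correct and is essentially the paper's own proof: both compose the representation-group extension of $E$ with $E\to G$ to produce a stem extension of $G$ of order $|E|\cdot|H^2(E,\BC^\times)|$, and conclude from the maximality of $|E|$ that $H^2(E,\BC^\times)=1$. The centrality step you describe (the commutator map $\widetilde E\to B$ killing $\widetilde E{}^{\mathrm{ab}}$) is exactly the paper's invocation of Gr\"un's Lemma, $Z_2(F)=Z(F)$ for perfect $F$; the only cosmetic simplification available is that $K\subseteq\widetilde E{}'$ is immediate since $\widetilde E{}'=\widetilde E$.
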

\begin{proof}

\medskip
Let $C := H^2(E, \BC^{\times})$, with $F$ the representation group of $E$.\ It occurs in the perfect central extension given in the middle row of the diagram below.\ 

\medskip
We have a diagram (using previous notation)
 \begin{eqnarray*}
\xymatrix{
&&1\ar[r] &A\ar[r]&E\ar[r]^p\ar[d]^{\id}&G\ar[r]&1\\
&1\ar[r]&C\ar[d]\ar[r]&F\ar[d]^{\id}\ar[r]^q &E\ar[d]^p\ar[r]&1\\
&1\ar[r]&B\ar[r]&F\ar[r]&G\ar[r]&1\\
 }
\end{eqnarray*}
where $B := q\inv(A)$.\ Note that $A \subseteq Z(E)$ and we have $[F, B] \subseteq C$.\ 
Then because $C \subseteq Z(F)$ we obtain $B \subseteq Z_2(F)$, the subgroup of $F$ such that $Z_2(F)/Z(F)=Z(F/Z(F))$.\ Since $F$ is perfect, by Gr\"un's Lemma we have $Z_2(F) = Z(F)$.\ Therefore $B \subseteq Z(F)$.\
Thus we have shown that 
the bottom row or $F$ is a stem extension of $G$. Since $|F| \ge |E|$, $|F|=|E|$.\ In particular, $F$ is a representation group of $G$. Therefore, $F \cong E$ and so $C$ must be trivial. 
\end{proof}

\medskip
We can now prove
\begin{thm}\label{thmE}Suppose that $G$ is a perfect group and let $E$ be a representation group of $G$, so that
(\ref{unicentext}) is the universal central extension of $G$.\ Then $D^{\omega}(E, B)$ is a quasi-Hopf algebra for every normalized $3$-cocycle $\omega$ on $E$ and for every subgroup $B \subseteq Z(E)$.\ That is, every subgroup $B$ of $Z(E)$ is $\omega$-admissible.
\end{thm}
\begin{proof} We have $\widehat{E} = 1$, so the short exact sequence $1\rightarrow \widehat{E}\rightarrow \G_0^{\w}(E, B)\rightarrow B\rightarrow 1$ splits for trivial reasons.\ We assert that $Z(E) \subseteq Z_{\omega}(E)$.\ For this we must show that
for each $g \in Z(E)$ we have $\theta_g \in B^2(E, \BC^{\times})$.\ But this follows immediately from Proposition \ref{propCR}.\
Thus, $B$ is $\w$-admissible, and the Theorem is proved.
\end{proof}

%%%%%%%%%%%%%%%%%%
\subsection{Examples}\label{SSex} The group-theoretic classification of finite groups with a unique involution is essentially a consequence of the Brauer-Suzuki
Theorem that we mentioned in the Introduction.

\medskip
Let $G$ be a group with a unique subgroup $A$ of order $2$.\ Let $Q := O(G)$ be the largest normal subgroup of $G$ with
\emph{odd order}, and let $P$ be a Sylow 2-subgroup of $G$.\ We always have $A\subseteq P$.\ To describe the possible groups $G$ it is useful to sort them into
classes according to (i) whether $G$ is solvable or nonsolvable, and (ii) whether $P$ is cyclic or generalized quaternion.\ For later purposes
we single out the case when $P = A$, but generally we will suppress some of the details in other cases - they will not be needed.

\medskip\noindent
1.\ $P = A$.\ Then $G = Q \times A$.

\medskip\noindent
2.\ $P$ is cyclic.\ Then $G = Q \rtimes P$.

\medskip\noindent
3.\ $P$ is a generalized quaternion and $G$ is solvable.\ Then either $G = Q{\rtimes} P$ or else $|P|\leq 16$
and $G \cong Q{\rtimes}SL_2(3)$ or $Q{\rtimes}SL_2(3){\cdot}2$.\ ($SL_2(3)$ and $SL_2(3){\cdot}2$ are the binary tetrahedral and octahedral groups respectively.)

\medskip\noindent
4.\ $P$ is a generalized quaternion, $G$ is perfect and $Q \subseteq Z(G)$.\ Then either $G \cong SL_2(q)$ for some odd prime power $q\geq 5$, or
$G$ is a (nonsplit) central extension $m{\cdot}A_n$ of the alternating group $A_n$ by a cyclic group of order $m$ for $m \in \{2, 6\}$ and $n \in \{6, 7\}$.\ (In fact $2{\cdot}A_6 \cong SL_2(9)$, and there are no other 
isomorphisms among these groups.)

\medskip
The binary polyhedral groups (finite subgroups of $SU(2)$) implicitly occur in this list.\ All of them are solvable except for
the binary icosahedral group, which is isomorphic to $SL_2(5)$.

\medskip
With the exception of $q = 9$, $SL_2(q) (q\geq 5)$ is the representation group of the simple group $PSL_2(q)$.\ On the other hand
$6{\cdot}A_n (n = 6, 7)$ is the representation group
of $A_n$.

\medskip
Thus, for example, if $G = SL_2(q)\ (q\geq 5)$ or $G = m{\cdot}A_n$, we obtain quasi-Hopf algebras $D^{\omega}(G, B)$ for all subgroups
$B \subseteq Z(G)$ and all $3$-cocycles $\omega$.\ This follows from Theorems \ref{thmwadm} or Theorem \ref{thmE}.

%%%%%%%%%%%%%%%%%

\section{Modularity and Super-Modularity}\label{Smodularity}
In this Section we complete the proof of the main Theorems stated in the Introduction.\ We first recall the definitions of modularity and develop some Lemmas based on \cite{MN2}. We prove more precise versions of the main Theorems. 

\subsection{Modular and Super-modular tensor categories}
Let $\CC$ be a braided fusion category with braiding $c$ and unit object $\1$, and let $\Irr(\CC)$ denote the set of isomorphism classes of simple objects in $\CC$.    \ Generally we do not distinguish between a simple object and its \emph{isomorphism class}. Throughout this paper, all fusion categories $\CC$ are pseudounitary and hence they are spherical with respect to the canonical pivotal structure. In particular, the categorical dimension of any nonzero $V \in \CC$ is positive. (cf. \cite{ENO}). 

\medskip
Let $\AA$ be a fusion subcategory of $\CC$.\ Recall that the \emph{M\"uger centralizer} of $\AA$ in $\CC$ (over $\BC$) is the full subcategory with the objects given by
$$
C_\CC(\AA) := \{X \in \CC \mid c_{Y,X} {\circ} c_{X,Y} = \id_{X {\o} Y} \text{ for all } Y \in \AA\}\,.
$$ 
A simple object $f$ of $\CC$ is called a \emph{fermion} if $f {\o} f \cong \1$ and $c_{f,f} = {-}\id_{f \o f}$.
In this context, we can now define modularity and super-modularity. 

\medskip
The braided fusion category $\CC$ is called \emph{modular} if 
$$ 
\{X \in \irr(\CC) \mid c_{Y,X} {\circ} c_{X,Y} = \id_{X \o Y} \text{ for all } Y \in \irr(\CC)\} = \{\1\} .
$$
Following \cite{BGHN}, $\CC$ is called \emph{super-modular } if 
$$ 
\{X \in \irr(\CC) \mid c_{Y,X} {\circ} c_{X,Y} = \id_{X \o Y} \text{ for all } Y \in \irr(\CC)\} = \{\1, f\} 
$$
for some fermion $f$ of $\CC$.\ In particular, $\CC$ is a super-modular category if, and only if, $C_\CC(\CC)$ is braided tensor equivalent to the category 
$\SVec$ of super vector spaces over $\CC$.\ Note that a super-modular category is called a \emph{slightly degenerate modular category} in \cite{DGNO}.\
 A quasitriangular quasi-Hopf algebra $H$ is called \emph{modular} or \emph{super-modular} if $\R(H)$ is modular or super-modular respectively.

%%%%%%%%%%%%%%%%%%%%%%%%%%%%%%%
\subsection{Modularity of $D^\w(G,A)$}
Let $\w$ be a normalized 3-cocycle of $G$, and $A$ an $\w$-admissible subgroup of $Z(G)$.\ Then the quasi-Hopf algebra $D^\w(G,A)$ is defined and is a homomorphic image of $D^\w(G)$.\ In particular, $\CC := \R(D^\w(G))$ is a modular tensor category and $\BB := \R( D^\w(G,A))$ is a (braided) fusion subcategory of $\CC$.

\medskip
Suppose $(\t, \nu)$ is an $\w$-admissible pair of $A$. The map
 $$
 \psi_{\t, \nu}(a) : e_g x \mapsto \delta_{a,g}\frac{\t_a(x)}{\nu(a)(x)}\ \ (g, x \in G, a \in A)
$$ 
defines a 1-dimensional character of $D^\w(G)$. We let $\widehat a$ denote the isomorphism class of 1-dimensional representations of 
$D^\w(G)$ which afford the character $\psi_{\t, \nu}(a)$. 

\medskip
The set of isomorphism classes of 1-dimensional representation of $D^\w(G)$ form a group, denoted by\footnote{SC stands for `simple currents'} $\SC(G,\w)$, under the tensor product of 
$\CC$, and 
$$
\tilde{p}_\nu : A \to \SC(G,\w),\\ a \mapsto \widehat{a}
$$ defines an injective group homomorphism.\ The braided monoidal structure on $\CC$ induces a braided monoidal structure on the full $\BC$-linear subcategory $\AA$ of $\CC$ generated by $\tilde{p}_\nu(A)$.\ In particular, 
$A \cong \irr(\AA)$ as groups.\ It is worth noting that $\AA$ and $\R(A)$ are equivalent $\BC$-linear abelian categories and they have the same fusion rules, but they may not be equivalent as tensor categories.

\medskip
The associativity and braiding on $\AA$ furnish an Eilenberg-MacLane 3-cocycle on $A$ via $\tilde{p}_\nu$ 
 which is given by 
$$
(\res_A^G \w, d_{\t, \nu}),\ \ d_{\t, \nu}(a,b) = \frac{\t_b(a)}{\nu(b)(a)}
$$ 
where $d_{\t, \nu}(a,b)$ is the scalar determined the braiding $\widehat{a} {\o} \widehat{b} \to \widehat{b} {\o} \widehat{a} $. By \cite{EM},
 $$
 q_{\t, \nu}(a) := d_{\t, \nu}(a,a)
 $$ 
 is a quadratic form on $A$ and it uniquely determines the (Eilenberg-MacLane) cohomology class of $(\res^G_A \w, d_{\t, \nu})$. Then
 \begin{equation}\label{eq:bicharacter}
 (a,b)_{\t, \nu} = \delta q_{\t, \nu} (a,b) = \frac{\t_b(a)\t_a(b)}{\nu(b)(a)\nu(a)(b)} \ \ (a, b \in A).
 \end{equation}
 defines a symmetric bicharacter of $A$.\ Moreover, the $S$-matrix of $\AA$ is given by
$$
 \ol S_{\widehat{a}, \widehat{b}} := d_{\t, \nu}(a,b) \cdot d_{\t, \nu}(b,a) = (a,b)_{\t, \nu} \ \ \ (a, b \in A).
$$
Therefore, $\AA$ is a modular subcategory of $\CC$ if, and only if, the symmetric bicharacter $(\cdot, \cdot)_{\t, \nu}$ is nondegenerate. 
It has also been proved \cite[Theorem 5.5]{MN2} that 
$$
C_\CC(\AA) = \BB \quad\text{and}\quad C_\CC(\BB) = \AA, 
$$ 
and the modularity of $\BB$, as well as $\AA$, is determined by the nondegeneracy of the bicharacter $(\cdot, \cdot)_{\t, \nu}$. 

\begin{remark}
If the bicharacter $(\cdot, \cdot)_{\t, \nu}$ of $A$ is totally degenerate, i.e., $(a, b)_{\t, \nu} = 1$ for all $a, b \in A$, then $\AA$ is a symmetric fusion category. It follows from a theorem of Deligne (cf. \cite{O}) that there are only two possibilities in this situation:
\begin{enumerate}
\item[\rm (i)] $\AA$ is equivalent to $\Rep(A)$ as braided tensor category. In this case, $\AA$ is called \emph{Tannakian}.
\item[\rm (ii)] $\AA$ is equivalent to $\Rep(A, u)$ as braided fusion category, where $\Rep(A, u)$ is the braided fusion category $\Rep(A)$ equipped with a symmetric braiding given by the order 2 element $u \in A$. In this case, $\AA$ is called \emph{super-Tannakian}.
\end{enumerate} 

The question of whether $\AA$ is modular, super-Tannakian or Tannakian is completely determined by the quadratic form $q_{\t, \nu} : A \to \BC^\times$.\
Indeed, $\AA$ is modular (resp.\ Tannakian) if, and only if, the quadratic form $q_{\t, \nu}$ is nondegenerate (resp. trivial), while $\AA$ is super-Tannakian if $q_{\t, \nu}$ is an order 2 character of $A$.

\medskip
In particular, if $A:=\langle a \rangle$ has order 2, then whether $\AA$ is modular, super-Tannakian or Tannakian is determined by the value $q_{\t, \nu}(a)= \pm i, -1$ and $1$ respectively. In particular, $\AA$ is super-Tannakian if, and only if, $\AA$ is equivalent to $\SVec$ as braided tensor categories. 
\end{remark}

 It is possible that the bicharacter $(\cdot,\cdot)_{\t,\nu}$ is \emph{independent of $\tau$ and $\nu$}.\ We illustrate this possibility in the following Example and Proposition. 
\begin{example} \label{ExE}
Let $E$ be the representation group of a perfect group $G$ and $\w$ a normalized 3-cocycle on $E$.\ Note that if
$A \subseteq Z(E)$, then $A$ is $\w$-admissible for all $\w \in Z^3(E, \BC^\times)$ by Theorem \ref{thmE}.\ Since $E$ is also perfect, $\widehat{E}$ is trivial. Therefore, $C^1(A, \widehat{E}) = 1$, and there exists a unique family $\t = \{\t_a\}_{a \in A} \subseteq C^1(E, \BC^\times)$ such that $\delta \t_a = \theta_a$ for all $a \in A$.\ Therefore, there is only one $\w$-admissibility pair $(\t, \nu)$ for $A$, where $\nu$ is unique element of $C^1(A, \widehat{E})$, and its associated bicharacter is given by
$$
(a,b)_{\t,\nu} = \frac{\t_a(b)\t_b(a)}{\nu(a)(b)\nu(b)(a)} = \t_a(b)\t_b(a) \quad (a,b \in A).
$$ 
\end{example}

\medskip
\begin{prop} \label{p:nondegenrate} Let $\w$ be a normalized 3-cocycle on $G$.\ Suppose $A := \langle a \rangle \subseteq G$ is an $\w$-admissible subgroup of order 2 and $(\t,\nu)$ an $\w$-admissible pair of $A$. Then 
\begin{equation} \label{eq:b2}
(x,y)_{\t, \nu} = \frac{\t_x(y)\t_y(x)}{\nu(x)(y)\nu(y)(x)} = \left\{\begin{array}{ll} \t_a(a)^2 & \text{if } x = y = a,\\
 1 & \text{otherwise}. 
 \end{array}\right. 
\end{equation}
In particular, $(\cdot,\cdot)_{\t, \nu}$ is nondegenerate (resp. totally degenerate) on $A$ if, and only if, $\t_a(a)^2 = {-}1$ (resp. $\t_a(a)^2 = 1$).\
 Moreover, $D^\w(G,A)$ is modular if, and only if, $\res_A^G(\w)$ is a not coboundary of $A$. 
\end{prop}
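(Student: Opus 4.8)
The plan is to reduce the whole statement to the single scalar $(a,a)_{\t,\nu}$, exploiting throughout that $a$ has order $2$. Since \eqref{eq:bicharacter} already guarantees that $(\cdot,\cdot)_{\t,\nu}$ is a symmetric bicharacter of $A=\{1,a\}\cong\BZ_2$, bimultiplicativity forces $(1,y)_{\t,\nu}=(x,1)_{\t,\nu}=1$ with no computation, which disposes of the ``otherwise'' line of \eqref{eq:b2}. For the diagonal value I would first observe that $\nu(a)(a)^2=1$: indeed $\nu(a)\in\widehat G$ is a homomorphism $G\to\BC^\times$ and $a^2=1$, so $\nu(a)(a)=\pm1$. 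Substituting this into the defining formula gives $(a,a)_{\t,\nu}=\t_a(a)^2/\nu(a)(a)^2=\t_a(a)^2$, completing \eqref{eq:b2}. It is worth stressing that this is exactly the step where the dependence on $\nu$ evaporates, and that it is special to $|A|=2$.

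Next I would read off the (non)degeneracy dichotomy. Being a bicharacter value on $\BZ_2$, $(a,a)_{\t,\nu}=\t_a(a)^2$ satisfies $(a,a)_{\t,\nu}^2=(a,a^2)_{\t,\nu}=1$, so $\t_a(a)^2\in\{1,-1\}$. A symmetric bicharacter on $\BZ_2$ has trivial radical precisely when its value on the generator is $-1$, and is totally degenerate precisely when that value is $1$; this yields the stated equivalences ``nondegenerate $\iff\t_a(a)^2=-1$'' and ``totally degenerate $\iff\t_a(a)^2=1$''.

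For the modularity statement I would assemble three facts. (i) Evaluating the cocycle relation $\delta\nu=\b_\t$ at $(a,a)$ and then at the point $g=a$, and using the normalizations $\t_1=1$ and $\t_{a^2}=\t_1$ in \eqref{eq:beta} together with the direct computation $\theta_a(a,a)=\w(a,a,a)$ (immediate from \eqref{eq:01} since $a$ is central), one obtains $\nu(a)(a)^2=\w(a,a,a)\,\t_a(a)^2$; combined with $\nu(a)(a)^2=1$ this gives $\t_a(a)^2=\w(a,a,a)^{-1}$, hence $\t_a(a)^2=-1\iff\w(a,a,a)=-1$. (ii) From the standard description $H^3(\BZ_2,\BC^\times)\cong\BZ_2$, a normalized $3$-cocycle on $A$ is a coboundary iff its value at $(a,a,a)$ equals $1$ (a normalized $2$-cochain on $\BZ_2$ has trivial coboundary at $(a,a,a)$), so $\res_A^G\w$ is \emph{not} a coboundary iff $\w(a,a,a)=-1$. (iii) By the criterion recalled just before the Proposition (\cite[Theorem 5.5]{MN2}), $\BB=\R(D^\w(G,A))$ is modular iff $(\cdot,\cdot)_{\t,\nu}$ is nondegenerate. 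Chaining (iii), the dichotomy, (i) and (ii) gives $D^\w(G,A)$ modular $\iff\t_a(a)^2=-1\iff\w(a,a,a)=-1\iff\res_A^G\w$ is not a coboundary, as required.

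The only genuinely non-formal step is (i): correctly extracting $\nu(a)(a)^2=\w(a,a,a)\,\t_a(a)^2$ from $\delta\nu=\b_\t$, which requires matching the normalization conventions in \eqref{eq:beta} and evaluating $\theta_a(a,a)$ with care. Everything else is either an immediate consequence of $a$ having order $2$ (the vanishing of the $\nu$-contribution and the $\{\pm1\}$ dichotomy) or a citation of the structure of $H^3(\BZ_2,\BC^\times)$ and of the modularity criterion of \cite{MN2}.
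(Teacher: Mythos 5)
Your proposal is correct and follows essentially the same route as the paper: reduce everything to the single value $(a,a)_{\t,\nu}=\t_a(a)^2$, relate it to $\w(a,a,a)$, and invoke $H^3(\BZ_2,\BC^\times)\cong\BZ_2$ together with \cite[Theorem 5.5]{MN2}. The only (harmless) deviation is in your step (i): the paper gets the key identity directly from $\delta\t_a=\theta_a$, namely $\t_a(a)^2=\delta\t_a(a,a)=\theta_a(a,a)=\w(a,a,a)$, whereas you detour through $\delta\nu=\b_\t$ and land on $\t_a(a)^2=\w(a,a,a)^{-1}$; since both sides lie in $\{\pm1\}$ the two identities coincide and your conclusion is unaffected.
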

\begin{proof}
Since $\nu \in C^1(A, \widehat{G})$ is normalized, $\nu(x)(y) \nu(y)(x) = 1$ all $x,y \in A$.\ Thus, \eqref{eq:b2} follows immediately from this observation.\
 Since $(\cdot,\cdot)_{\t,\nu}$ is a bicharacter of $A$, $(a,a)_{\t,\nu} = \pm 1$ which completely determines the nondegeneracy of $(\cdot,\cdot)_{\t,\nu}$.\ The second statement of the Proposition follows. Note that the value $\w(a,a,a)$ depends on the cohomology class of $\res_A^G(\w)$ of $A$. Since
$$
\t_a(a)^2 = \theta_a(a,a) = \w(a,a,a)\,,
$$
the bicharacter $(\cdot,\cdot)_{\t,\nu}$ on $A$ is nondegenerate if, and only if, $\w(a,a,a) = {-}1$ which is equivalent to that $\res_A^G(\w)$ is not a coboundary of $A$.\ The last statement now follows directly from \cite[Theorem 5.5]{MN2}.
\end{proof}

The following proposition addresses the super-modularity of $D^\w(G,A)$ when $A \cong \BZ_2$.

\begin{prop} \label{p:degenerate} Let $A \subseteq Z(G)$ be an $\w$-admissible subgroup for some normalized $3$-cocycle $\w$ on $G$, and let
$(\t, \nu)$ be an $\w$-admissible pair for $A$.\ Set $\CC := \Rep(D^{\omega}(G)),\ \BB := \Rep(D^{\omega}(G, A)) $ and $\AA := C_{\CC}(\BB)$.\
Then the following hold : 
\begin{enumerate}
\item[{\rm (i)}] 
The bicharacter $(\cdot, \cdot)_{\t, \nu}$ of $A$ is totally degenerate if, and only if, 
$$
C_\BB(\BB) = \AA.
$$
In this case, $\res_A^G(\w)$ is a coboundary of $A$.
\item[{\rm (ii)}]Suppose $A := \langle a\rangle$ has order $2$. Then the following statements are equivalent: 
\begin{enumerate}
\item[\rm (a)] $\BB$ is super-modular.
\item[\rm (b)] $\AA$ is equivalent to $\SVec$ as braided tensor categories.
\item[\rm (c)] $d_{\t, \nu}(a,a) = \frac{\t_a(a)}{\nu(a)(a)} = {-}1$. 
\end{enumerate}
\end{enumerate}
\end{prop}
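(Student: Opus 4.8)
The plan is to reduce everything to the double-centralizer relations $C_\CC(\AA) = \BB$ and $C_\CC(\BB) = \AA$ of \cite[Theorem 5.5]{MN2}, together with the identification of the $S$-matrix of $\AA$ with the bicharacter, $\ol S_{\widehat a,\widehat b} = (a,b)_{\t,\nu}$. The single observation driving both parts is that the Müger center of $\BB$ may be computed inside $\CC$: an object of $\BB$ centralizes all of $\BB$ precisely when it lies in $C_\CC(\BB) = \AA$, so
\[
C_\BB(\BB) \;=\; \BB \cap C_\CC(\BB) \;=\; \BB \cap \AA .
\]

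For part (i), I would first show that $(\cdot,\cdot)_{\t,\nu}$ is totally degenerate if and only if $\AA \subseteq \BB$. Since $\AA$ is generated by the invertible objects $\widehat a$, the inclusion $\AA \subseteq \BB = C_\CC(\AA)$ holds exactly when each generator centralizes $\AA$, i.e.\ when every double braiding inside $\AA$ is trivial; as the corresponding scalars are the $S$-matrix entries $(a,b)_{\t,\nu}$, this is precisely total degeneracy. Combining this with $C_\BB(\BB) = \BB\cap\AA$ yields $C_\BB(\BB) = \AA \iff \AA\subseteq\BB \iff$ total degeneracy, which is the asserted equivalence. For the final clause, total degeneracy makes $\AA$ a symmetric fusion category, so by Deligne's theorem (as recalled in the preceding Remark) $\AA$ is either Tannakian or super-Tannakian; in both cases the underlying fusion category is $\Rep(A)$, whose associator is trivial. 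Since the associator of $\AA$ is the class of $\res_A^G\w$, we conclude that $\res_A^G\w$ is a coboundary of $A$.

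For part (ii), with $A = \langle a\rangle$ of order $2$, I would read the equivalence (b) $\iff$ (c) directly off the preceding Remark: there $\AA \simeq \SVec$ exactly when $\AA$ is super-Tannakian, and for $|A| = 2$ this occurs exactly when the quadratic form satisfies $q_{\t,\nu}(a) = {-}1$; since $q_{\t,\nu}(a) = d_{\t,\nu}(a,a) = \t_a(a)/\nu(a)(a)$, this is statement (c). For (a) $\iff$ (b) I would again invoke the Müger-center computation: $\BB$ is super-modular iff $C_\BB(\BB) \simeq \SVec$ as braided tensor categories, and $C_\BB(\BB) = \BB\cap\AA$ is a fusion subcategory of $\AA$. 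Because $A \cong \irr(\AA)$ has order $2$, the category $\AA$ has exactly two simple objects, so any fusion subcategory equivalent to $\SVec$ (which also has two simple objects) must equal $\AA$. Hence super-modularity forces $C_\BB(\BB) = \AA \simeq \SVec$, giving (b); conversely, if $\AA \simeq \SVec$ then $\AA$ is super-Tannakian, hence symmetric, so $(\cdot,\cdot)_{\t,\nu}$ is totally degenerate and part (i) gives $C_\BB(\BB) = \AA \simeq \SVec$, i.e.\ $\BB$ is super-modular.

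The main obstacle — really the only nonformal point — is establishing the Müger-center identity $C_\BB(\BB) = \BB\cap\AA$ and the translation of ``$\AA \subseteq \BB$'' into total degeneracy of the bicharacter. Once these are in place, both parts follow by combining the double-centralizer relations with the object-counting in $\AA$ and the Tannakian/super-Tannakian dichotomy quoted in the Remark.
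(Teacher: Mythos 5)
Your proposal is correct and follows essentially the same route as the paper: both rest on the double-centralizer relations $C_\CC(\AA)=\BB$, $C_\CC(\BB)=\AA$ from \cite[Theorem 5.5]{MN2}, the identification of the double-braiding scalars in $\AA$ with the bicharacter $(\cdot,\cdot)_{\t,\nu}$, Deligne's theorem for the coboundary claim, and a counting argument ($\FPdim$ in the paper, number of simple objects in yours) to force $C_\BB(\BB)=\AA$ in the super-modular case. Your packaging of the key step as the identity $C_\BB(\BB)=\BB\cap\AA$ is a clean reorganization of the paper's two-inclusion argument, but not a different proof.
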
 
\begin{proof}

\noindent (i) The totally degeneracy of $(\cdot, \cdot)_{\t, \nu}$ on $A$ implies $\AA$ is a symmetric fusion category, i.e. $C_\AA(\AA) = \AA$. By a theorem of Delign (cf. \cite{O}), $\AA$ is tensor equivalent to $\Rep(K)$ for some finite group $K$. It is immediate to see that $K \cong A$. Since the associativity on $\AA$ is given by $\res_A^G(\w\inv)$, $\res_A^G(\w\inv)$ must be a coboundary.

Since $C_\AA(\AA)$ is a fusion subcategory of $C_\CC(\AA)$, by \cite[Theorem 5.5]{MN2}, we have $\AA$ is a fusion subscategory of $\BB$, and so 
$\AA$ is also a fusion subscategory of $C_\BB(\BB)$. Since $C_\BB(\BB)$ is a fusion subscategory $C_\CC(\BB)=\AA$, we have $C_\BB(\BB)=\AA$. 

Conversely, if $C_\BB(\BB)=\AA$, then $\AA$ is a braided fusion subcategory of $\BB$ and $\AA$ is symmetric. In particular, 
$c_{\widehat{b}, \widehat{a}} \circ c_{\widehat{a}, \widehat{b}} = \id_{\widehat{a}\o\widehat{b}}$ for all $\widehat{a}, \widehat{b} \in \irr(\AA)$. In terms of the Eilenberg-MacLane 3-cocycle $(\res_A^G(\w), d_{\t, \nu})$ of $A$, we have
$$
1= d_{\t, \nu}(a, b) d_{\t, \nu}(b,a) = (a,b)_{\t, \nu} \quad\text{for } a, b \in A\,.
$$

\noindent (ii) (a) $\Leftrightarrow$ (b): If $\BB$ is super-modular, then $C_\BB(\BB)$ is a braided fusion subcategory of $\BB$ equivalent $\SVec$. In particular, $\FPdim( C_\BB(\BB)) = 2$. Since
$C_\BB(\BB)$ is a braided fusion subcategory of $C_\CC(\BB) = \AA$ and $\FPdim(\AA) = 2$, $C_\BB(\BB) = \AA$. Conversely, if $\AA$ is braided tensor equivalent to $\SVec$, then each entry of the $S$-matrix of $\AA$ is 1 and hence the bicharacter $(\cdot, \cdot)_{\t, \nu}$ on $A$ is totally degenerate. By (i), $C_\BB(\BB) = \AA$ and so $\BB$ is super-modular. 
 
\noindent (b) $\Leftrightarrow$ (c): $\AA$ is equivalent to the category $\SVec$ if, and only if, the nonunit simple object $\widehat{a}$ is a fermion, i.e. $c_{\widehat{a}, \widehat{a}} = {-}\id_{\widehat{a} \o \widehat{a} }$.\ In term of the Eilenberg-MacLane 3-cocycle $(\res_A^G\w, d_{\t, \nu})$ of $A$, the last equality is equivalent to
$$
-1 = d_{\t, \nu}(a, a) = \frac{\t_a(a)}{\nu(a)(a)}\,. \qedhere
$$
\end{proof}

Recall from Section 3 that if $A$ is an $\w$-admissible subgroup of $G$, then $A$ is an $\w_H$-admissible subgroup of $H$ for any subgroup $H$ of $G$ containing $A$. The following lemma shows that $D^\w(G,A)$ and the induced $D^{\w_H}(H,A)$ have the same modularity or super-modularity.

\begin{lem}\label{l:induced}
Let $G$ be a group with subgroups $A \subseteq H \subseteq G$, and let $\w$ be a normalized 3-cocycle on $G$ such that $A$ is $\omega$-admissible. Set $\CC_G = \Rep(D^\w(G))$, $\CC_H = \Rep(D^{\w_H}(H))$, $\BB_G = D^\w(G,A)$ and $\BB_H = D^{\w_H}(H,A)$, where $D^{\w_H}(H,A)$ is induced from $D^\w(G,A)$.\ Then, $C_{\CC_G}(\BB_G)$ is equivalent to $C_{\CC_H}(\BB_H)$ as braided tensor categories.\ In particular, $\BB_G$ is modular if, and only if, $\BB_H$ is modular. If $A$ is of order 2, then $\BB_G$ is super-modular if, and only if, $\BB_H$ is super-modular.
\end{lem}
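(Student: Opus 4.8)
The plan is to reduce everything to the explicit description of the Müger centralizer $C_\CC(\AA)=\BB$ already established in the excerpt, together with the fact (Example \ref{ExE} and Proposition \ref{p:nondegenrate}) that the modularity of $\BB$ is controlled entirely by the quadratic form $q_{\t,\nu}$ on $A$, equivalently by the restricted bicharacter $(\cdot,\cdot)_{\t,\nu}$. The key structural input I would use is that $A$ is $\w_H$-admissible with induced admissible pair $(\t_H,\nu_H)$, where $\t_{H,a}=\res_H^G(\t_a)$ and $\nu_H(a)=\res_H^G(\nu(a))$. First I would compute the bicharacter attached to the induced pair and observe that restriction commutes with the formulas \eqref{eq:bicharacter}: for $a,b\in A\subseteq H$ we have $\t_{H,a}(b)=\t_a(b)$ and $\nu_H(a)(b)=\nu(a)(b)$, since these are values at elements of $A$, which already lie in $H$. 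Hence $(\cdot,\cdot)_{\t_H,\nu_H}=(\cdot,\cdot)_{\t,\nu}$ as bicharacters on $A$, and likewise $q_{\t_H,\nu_H}=q_{\t,\nu}$ and $d_{\t_H,\nu_H}(a,a)=d_{\t,\nu}(a,a)$.

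Next I would invoke the categorical dictionary. Applying the analysis of the excerpt to both $G$ and $H$ produces the small braided categories $\AA_G=C_{\CC_G}(\BB_G)$ and $\AA_H=C_{\CC_H}(\BB_H)$, each generated by the image of $A$ under $\tilde p_\nu$ (respectively $\tilde p_{\nu_H}$). Both are braided categories on the same fusion ring $\Irr\cong A$, with Eilenberg–MacLane data $(\res_A^G\w,d_{\t,\nu})$ and $(\res_A^H\w_H,d_{\t_H,\nu_H})$. Since $\res_A^H\w_H=\res_A^H\res_H^G\w=\res_A^G\w$ and the $d$-part agrees by the previous paragraph, the two abelian 3-cocycles coincide. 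By the Eilenberg–MacLane classification cited in the excerpt, a braided pointed category on $A$ is determined up to braided tensor equivalence by its abelian 3-cocycle class, so $\AA_G\simeq\AA_H$ as braided tensor categories. This is precisely the claimed equivalence $C_{\CC_G}(\BB_G)\simeq C_{\CC_H}(\BB_H)$.

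The modularity and super-modularity statements then fall out formally. Modularity of $\BB_G$ is equivalent to nondegeneracy of $(\cdot,\cdot)_{\t,\nu}$ on $A$ (this is the characterization recorded just before the Remark, coming from \cite[Theorem 5.5]{MN2}), and likewise $\BB_H$ is modular iff $(\cdot,\cdot)_{\t_H,\nu_H}$ is nondegenerate; since the two bicharacters are equal, the equivalence is immediate. When $|A|=2$, super-modularity of $\BB_G$ is equivalent by Proposition \ref{p:degenerate}(ii) to the condition $d_{\t,\nu}(a,a)=-1$, and the same proposition applied to $H$ gives super-modularity of $\BB_H$ iff $d_{\t_H,\nu_H}(a,a)=-1$; these scalars agree, so we are done.

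I expect the only genuine point requiring care—the main obstacle—to be the verification that $(\t_H,\nu_H)$ really is an $\w_H$-admissible pair realizing the induced double and, more delicately, that passing to the subgroup $H$ does not enlarge the Müger centralizer in a way the restricted data fails to detect. The reassuring fact is that the relevant invariants $(\cdot,\cdot)_{\t,\nu}$, $q_{\t,\nu}$, and $d_{\t,\nu}(a,a)$ depend only on values of $\t_a,\nu(a)$ at arguments in $A$, all of which lie in $H$; thus restriction to $H$ leaves them untouched. Consequently no degeneracy can be created or destroyed by induction, and the bicharacter—and with it modularity and super-modularity—is genuinely preserved. Once this invariance is pinned down, the remainder of the argument is a direct application of the results already assembled in the excerpt.
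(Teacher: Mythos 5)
Your proposal is correct and follows essentially the same route as the paper: both arguments reduce to the observation that the induced pair $(\t_H,\nu_H)$ restricts $\t_a$ and $\nu(a)$ only at arguments lying in $A\subseteq H$, so the quadratic form $q_{\t,\nu}$ (equivalently the Eilenberg--MacLane data on $A$) is unchanged, whence $C_{\CC_G}(\BB_G)\simeq C_{\CC_H}(\BB_H)$ by the classification of pointed braided categories, with modularity and super-modularity then read off from \cite[Theorem 5.5]{MN2} and Proposition \ref{p:degenerate}. Your explicit check that the two abelian $3$-cocycles $(\res_A^G\w,d_{\t,\nu})$ and $(\res_A^H\w_H,d_{\t_H,\nu_H})$ literally coincide is just a slightly more detailed version of the paper's one-line computation that $q_{\t_H,\nu_H}=q_{\t,\nu}$.
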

\begin{proof}
Let $(\t,\nu)$ be an $\w$-admissible pair of $A$ for $D^\w(G,A)$, and let $(\t_H,\nu_H)$ be the $\w_H$-admissibility pair of $A$ for $D^{\w_H}(H,A)$ induced from $D^\w(G,A)$ (cf. Section 3). Let $\AA_G = C_{\CC_G}(\BB_G)$ and $\AA_H = C_{\CC_H}(\BB_H)$. By \cite[Theorem 5.5]{MN2}, $\AA_G$ and $\AA_H$ are pointed braided fusion categories determined by quadratic forms $q_{\t, \nu}:A \to \BC^\times$ and $q_{\t_H, \nu_H}:A \to \BC^\times$ respectively.\ However, for $a \in A$,
 $$
 q_{\t_H, \nu_H}(a) = \frac{\t_{H,a}(a)^2}{\nu_H(a)(a)^2} = \frac{\t_{a}(a)^2}{\nu(a)(a)^2} = q_{\t, \nu}(a)\,.
 $$
 Therefore, $\AA_G$ and $\AA_H$ are equivalent as braided tensor categories. In particular, $\BB_G$ is modular if, and only if, $\BB_H$ is modular by \cite[Theorem 5.5]{MN2}. Similarly, if $|A|=2$ then $\BB_G$ is super-modular if, and only if, $\AA_G$ and $\AA_H$ are equivalent to $\SVec$ as braided tensor categories. By Proposition \ref{p:degenerate}, this is equivalent to the super-modularity of $\BB_H$.
 \end{proof}
 
Note that $\b$ is defined in terms of the family $\t = \{\t_a\}_{a\in A}$, which is quite arbitrary, and the choice of $\nu$ depends on $\t$. We will denote $\b$ as $\b_\t$ in the following Lemma which describes the relation of these two parameters. 

\begin{lem} \label{l:relation}
Let $\w$ be a normalized 3-cocycle of $G$ and $A \subseteq Z(G)$ an $\w$-admissible subgroup.\ Suppose $(\t, \nu)$ is an $\w$-admissible pair of $A$.\ Then we have
\begin{equation}
\theta_x(a,b) = \frac{\t_{ab}(x)}{\t_a(x) \t_b(x)} \frac{\nu(a)(x)\nu(b)(x)}{\nu(ab)(x)}\ \ (x \in G, a,b \in A).
\end{equation}
If $(\t', \nu')$ is another $\w$-admissible pair of $A$, then there exists $f \in Z^1(A, \widehat{G})$ such that 
\begin{equation}
\frac{\t'_a(x)}{\nu'(a)(x)} = \frac{\t_a(x)}{\nu(a)(x)}\cdot f(a)(x)\,.
\end{equation}
\end{lem}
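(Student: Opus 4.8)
The plan is to work directly from the definitions of $\theta_x$, $\b_\t$, and the admissibility data. For the first identity, I would start from the defining relation $\delta \nu = \b_\t$ in $C^1(A,\widehat G)$, unpacked pointwise at an element $x \in G$. By definition of the coboundary on the cochain complex computing $H^*(A,\widehat G)$, we have
\begin{equation*}
\delta\nu(a,b)(x) = \frac{\nu(b)(x)\,\nu(a)(x)}{\nu(ab)(x)},
\end{equation*}
since $A$ is central and acts trivially on $\widehat G$ (each $\nu(a) \in \widehat G$ is just evaluated at $x$). On the other hand, formula \eqref{eq:beta} gives
\begin{equation*}
\b_\t(a,b)(x) = \theta_x(a,b)\,\frac{\t_a(x)\,\t_b(x)}{\t_{ab}(x)}.
\end{equation*}
Equating these and solving for $\theta_x(a,b)$ yields the asserted formula. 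This is essentially bookkeeping: the only thing to be careful about is the direction of the various fractions and that the roles of $a,b$ (elements of $A$) and $x$ (the evaluation point in $G$) are not swapped. I expect this part to be a short unwinding of notation rather than a substantive argument.

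\textbf{The uniqueness/torsor statement.} For the second identity, the claim is that two admissible pairs $(\t,\nu)$ and $(\t',\nu')$ differ by a cocycle $f \in Z^1(A,\widehat G)$ in the precise sense stated. The natural strategy is to define $f$ by the very formula we want to verify, namely set
\begin{equation*}
f(a)(x) := \frac{\t'_a(x)}{\nu'(a)(x)} \cdot \frac{\nu(a)(x)}{\t_a(x)},
\end{equation*}
and then show two things: (i) for each fixed $a$, the function $x \mapsto f(a)(x)$ is a genuine character of $G$, i.e.\ $f(a) \in \widehat G$; and (ii) the assignment $a \mapsto f(a)$ is a $1$-cocycle in $Z^1(A,\widehat G)$, which since $A$ acts trivially just means $f$ is a homomorphism $A \to \widehat G$.

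\textbf{Why $f(a)$ is a character.} For (i), I would exploit the first part of the Lemma. Both pairs satisfy the identity just proved, so comparing
\begin{equation*}
\theta_x(a,b) = \frac{\t_{ab}(x)}{\t_a(x)\t_b(x)}\frac{\nu(a)(x)\nu(b)(x)}{\nu(ab)(x)}
= \frac{\t'_{ab}(x)}{\t'_a(x)\t'_b(x)}\frac{\nu'(a)(x)\nu'(b)(x)}{\nu'(ab)(x)}
\end{equation*}
(the right-hand sides agree because $\theta_x$ depends only on $\w$, not on the chosen pair) shows that the quantity $g_a(x) := \frac{\t'_a(x)}{\nu'(a)(x)}\cdot\frac{\nu(a)(x)}{\t_a(x)} = f(a)(x)$ is multiplicative in $a$: $f(ab) = f(a)f(b)$ as functions of $x$. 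This simultaneously delivers the cocycle condition (ii). Establishing that each $f(a)$ is itself a homomorphism in the variable $x$ is the one point requiring genuine input: I would argue that since $\delta\t_a = \theta_a$ and $\delta\t'_a = \theta_a$ coincide (both equal the $\w$-determined $\theta_a$), the ratio $\t'_a/\t_a$ is a $1$-cocycle on $G$, hence a character; and since $\nu(a),\nu'(a) \in \widehat G$ already, the full ratio $f(a)$ lands in $\widehat G$.

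\textbf{Main obstacle.} The computations themselves are routine cocycle manipulations; the one place to proceed with care is reconciling the two different coboundary conventions in play — the $G$-level condition $\delta\t_a = \theta_a$ (on the cochain complex of $G$) versus the $A$-level condition $\delta\nu = \b_\t$ (on the cochain complex of $A$ with coefficients in $\widehat G$). Keeping straight which $\delta$ acts on which variable, and verifying that $\t'_a/\t_a$ is closed on $G$ so that it genuinely yields an element of $\widehat G$, is the crux. Once that is settled, multiplicativity of $f$ in $a$ follows formally from the $a,b$-independence of $\theta_x$ established in the first half, and the proof closes.
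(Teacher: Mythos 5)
Your proposal is correct and follows essentially the same route as the paper: the first identity is just the pointwise unwinding of $\delta\nu=\b_\t$ against \eqref{eq:beta}, and for the second part both arguments rest on the same two facts — that $\t'_a/\t_a$ is a character of $G$ because $\delta\t_a=\delta\t'_a=\theta_a$, and that the cocycle (homomorphism) condition on $f$ follows from comparing the two instances of the $\b$-relation. The only cosmetic difference is that the paper factors through an intermediate cochain $\chi$ with $\t'=\t\chi$ and deduces $\delta(\nu'\chi\inv)=\delta\nu$, whereas you define $f$ directly as the ratio and verify multiplicativity in $a$; the computation is the same.
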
 
\begin{proof}
The first equality follows immediately from the definition of $\b_\t$ and that $A$ is $\w$-admissible. By definition, 
$$
\b_{\t'}(a,b)(x) = \theta_x(a,b)\frac{\t'_a(x)\t'_b(x)}{\t'_{ab}(x)}
$$
and $\t'_a = \t_a \chi(a)$ for some $\chi \in C^1(A,\widehat{G})$ for each $a \in A$. Thus, we have
$$
\b_{\t'}(a,b)(x) = \theta_x(a,b)\frac{\t_a(x)\t_b(x)}{\t_{ab}(x)} \frac{\chi(a)(x)\chi(b)(x)}{\chi(ab)(x)} = \frac{\nu'(a)(x)\nu'(b)(x)}{\nu'(ab)(x)} 
$$
for all $a,b \in A$ and $x \in G$. This implies
$\delta (\nu' \chi\inv) = \delta \nu$.
Therefore, $\nu' \chi\inv f = \nu$ for some $f \in Z^1(A, \widehat{G})$ and hence
$$
\frac{\t'_a(x)}{\nu'(a)(x)} = \frac{\t_a(x)}{\nu(a)(x)} \cdot f(a)(x)\quad\text{for all} a,b \in A, x \in G\,. \qedhere
$$ 
\end{proof}
% The next Lemma follows directly from \cite[Lemma 4.4]{LN}.
%\begin{lem} Let $A$ be a $\w$-admissible subgroup of $G$ for some 3-cocycle $\w$ on $G$, $\{t_a\}_{a \in A} \subset C^1(G, \BC^\times)$, and
% $\nu \in C^1(A, \widehat{G})$ such that $\delta \t_a = \theta_a$ and $\delta \nu = \b$. Then $\res^G_A \w$ is a $3$-coboundary if, and only if, $q_{\nu, t} = b(a,a)$ for some bicharacter $b$ on $A$. If $A$ has odd order, then $\res^G_A \w$ is a $3$-coboundary.
%\end{lem} 

\subsection{Proof of the Main Theorem}
We now prove a
precise version of our first main theorem based on the results of \cite{MN2}.
\begin{thm}\label{thmmodular} Suppose that $G$ is a finite group with a unique subgroup $A$ of order
$2$. Then for any normalized $3$-cocycle
$\w$ on $G$, $A$ is $\w$-admissible, i.e. there is a commuting diagram of quasi-Hopf algebras and morphisms
 $$\
 \xymatrix{
 \BC_{\w}^{G}\ar[d]^-{id} \ar[r]^-{i} & D^{\w}(G) \ar[d]^-{\pi} \ar[r]^-{p} & \BC G
 \ar[d]^-{\pi_{\bar{G}}} \\
 \BC_{\w}^{G} \ar[r]^-{i} & D^{\w}(G, A) \ar[r]^-{p'} & \BC(\bar{G})\,.
 }
 $$
 Moreover, $D^{\w}(G, A)$ is a modular quasi-Hopf algebra if, and only if, $[\w]$ contains a
 $2$-generator of $H^3(G, \BC^{\times})$. Exactly one half of $[\w] \in H^3(G, \BC^\times)$such that quasi-Hopf algebras $D^\w(G,A)$ are modular.
 \end{thm}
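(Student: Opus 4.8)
The plan is to establish Theorem~\ref{thmmodular} in three logically separate pieces. The first assertion, that $A$ is $\w$-admissible together with the existence of the commuting diagram of quasi-Hopf algebras, has essentially already been secured: $\w$-admissibility is exactly Theorem~\ref{thmwadm}, and once $A$ is $\w$-admissible the quasi-Hopf algebra $D^\w(G,A)$ is defined as a homomorphic image of $D^\w(G)$, so the diagram is furnished by the construction of \cite{MN2}. The remaining content is the modularity criterion and the counting statement ``exactly one half,'' and it is on these that I would concentrate.

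\textbf{The modularity criterion.} By Proposition~\ref{p:nondegenrate}, once $A=\langle a\rangle$ has order $2$ the modularity of $D^\w(G,A)$ is equivalent to $\res_A^G(\w)$ \emph{not} being a coboundary of $A$, equivalently $\w(a,a,a)=-1$. So I would reduce the whole modularity question to: when does the class $[\w]\in H^3(G,\BC^\times)$ restrict nontrivially to $H^3(A,\BC^\times)\cong\BZ_2$? The restriction map $\res_A^G:H^3(G,\BC^\times)\to H^3(A,\BC^\times)$ is a group homomorphism, and $D^\w(G,A)$ is modular precisely when $[\w]$ lands in the nonzero element. The key is to translate ``$\res_A^G[\w]\ne 0$'' into the language of $2$-generators. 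Passing to additive coefficients via $H^3(G,\BC^\times)\cong H^4(G,\BZ)$, the Artin--Tate periodicity (invoked in the Introduction) makes $H^4(G,\BZ)_2$ cyclic of order $|G|_2$, and $A$ being the unique involution gives $H^4(A,\BZ)\cong\BZ_2$. The plan is to show that $\res_A^G$ restricted to the $2$-torsion $H^4(G,\BZ)_2\to H^4(A,\BZ)$ is the unique surjection of a cyclic $2$-group onto $\BZ_2$, so that $\res_A^G[\w]\ne 0$ holds exactly when $[\w]$ generates the top $\BZ_2$-quotient, i.e. when $\langle[\w]\rangle$ contains a $2$-generator. I would verify surjectivity by reducing to a Sylow $2$-subgroup $P$ (cyclic or generalized quaternion) using Lemma~\ref{CE}: since $A\subseteq P$ and $\res_P^G$ is injective on $2$-torsion, it suffices to check that $\res_A^P:H^4(P,\BZ)\to H^4(A,\BZ)$ is onto, which follows from the explicit periodicity generator of these groups with periodic cohomology.

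\textbf{The counting statement.} For the final sentence, I would argue as follows. The modular classes are exactly the fibre $(\res_A^G)^{-1}(\text{nonzero})$ of the surjective homomorphism $\res_A^G:H^3(G,\BC^\times)\to H^3(A,\BC^\times)\cong\BZ_2$ composed with projection onto the relevant $\BZ_2$. A surjective homomorphism of finite abelian groups onto $\BZ_2$ has both fibres of equal size, each containing exactly half of the domain. Thus exactly one half of the classes $[\w]$ are modular. The only subtlety is to make sure that the target really is $\BZ_2$ and the map really is onto, which is precisely the surjectivity established in the previous step; granting that, the ``one half'' count is immediate from the homomorphism being surjective onto a group of order $2$.

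\textbf{Main obstacle.} The routine pieces are admissibility and the final counting; the genuine work is the second step, namely proving that $\res_A^G$ is surjective onto $H^3(A,\BC^\times)$ and identifying its nonzero fibre with the classes containing a $2$-generator. The hard part will be pinning down the behaviour of the Artin--Tate periodicity isomorphism under restriction to the unique involution: one must confirm that a $2$-generator of $H^4(G,\BZ)_2$ restricts to a generator of $H^4(A,\BZ)$, rather than merely checking surjectivity of the map on some element. I expect to handle this by the Sylow reduction to $P$ (cyclic or generalized quaternion) and an explicit description of the periodicity class on $P$, where the restriction to $A=\langle z\rangle$ with $z$ the unique involution can be computed directly.
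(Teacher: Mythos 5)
Your proposal is correct and follows essentially the same route as the paper's proof: admissibility via Theorem \ref{thmwadm}, reduction of modularity to the nontriviality of $\res_A^G[\w]$ via Proposition \ref{p:nondegenrate}, identification of the classes restricting nontrivially to $A$ with those containing a $2$-generator by passing through the Sylow $2$-subgroup $P$ and the Artin--Tate periodicity (the paper likewise asserts that $\res: H^3(G,\BC^\times)_2 \to H^3(P,\BC^\times)$ is an isomorphism onto a cyclic group of order $|P|$ and that $\res: H^3(P,\BC^\times)\to H^3(A,\BC^\times)\cong\BZ_2$ is surjective), and the half-count from the fibres of the resulting surjection onto $\BZ_2$. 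The only cosmetic difference is that you phrase the periodicity step in $H^4(\cdot,\BZ)$ rather than $H^3(\cdot,\BC^\times)$, which the paper explicitly treats as interchangeable.
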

\begin{proof} The existence of the diagram is equivalent to the $\w$-admissibility of $A$ \cite{MN2},
which we have already established in Theorem \ref{thmsplit1} for all $\w$. So it remains to determine which $\w$ lead to a
modular quasi-Hopf algebra.

\medskip Let $\w$ be a normalized 3-cocycle of $G$. Proposition \ref{p:degenerate} gives necessary and sufficient conditions for the modularity of $D^\w(G,A)$. Namely, $\res^G_A (\w)$ is a nontrivial 3-cocycle of $A$. Since $G$ is a unique involution, the Artin-Tate theory says that $\res: H^3(G, \BC^\times)_2 \to H^3(P, \BC^\times)$ is an isomorphism and $H^3(P, \BC^\times)$ is a cyclic group of order $|P|$, where $P$ is a Sylow 2-subgroup of $G$. Moreover, $\res: H^3(P, \BC^\times)_2 \to H^3(A, \BC^\times) \cong \BZ_2$ is surjective. Therefore, $\res_A^G\w$ is nontrivial if, and only if, $[\w_P]$ is a generator of $H^3(P, \BC^\times)$, i.e., $[\w]$ contains 2-generator.\ On the other hand, $\res_A^G\w$ is nontrivial if, only only if, $[\w] \not\in \ker \res_A^G$. 
Therefore, exactly one half of the classes $[\w]$ give modular quasi-Hopf algebras. 
\end{proof}

\begin{remark}
The $\w$-admissibility of $A$ implies the existence of $D^\w(G,A)$ which, in turn, depends on the choice of an $\w$-admissible pair of $A$.\ The preceding statement holds for any $\w$-admissible pair of $A$.\ However, this is not the case for super-modularity of $D^\w(G,A)$.\ The following example demonstrates this difference. 
\end{remark}

\begin{example}
Let $G = \langle z \rangle$ be a multiplicative group of order 2, $A = G$ and $\w = 1$, the constant 3-cocycle of $G$.\ Then 
$D^\w(G) = D(G)$, $\theta_z = \g_z = 1$. We simply take $\t_g = 1$ for all $g \in G$.\ Then $\b(a,b)$ is the trivial character $G$ for all $a, b \in G$.\ Therefore, $\delta \nu = \b$ for $\nu \in \Hom(G, \widehat{G})$.\ Note that for any $\nu \in\Hom(G, \widehat{G})$, the bicharacter $(\cdot, \cdot)_{\nu, \t}$ on $A$ is totally degenerate.\ Since $\FPdim \AA = \FPdim D^\w(G,A) = 2$, it follows from Proposition \ref{p:degenerate} that $\AA = \R(D^\w(G,A))$.

If $\nu(z)(z) = 1$, then $d_{\t, \nu}(z,z) = 1$ and so $D^\w(G,A) = \AA$ is braided tensor equivalent to $\R(G)$.\ However, 
if $\nu(z)(z) = {-}1$, then $d_{\t, \nu}(z,z) = {-}1$ and so $D^\w(G,A) = \AA$ is braided tensor equivalent to $\SVec$. 
\end{example}

The next Theorem demonstrates that the preceding example is almost the only exception.\ This is mainly due to the following easy Lemma about 2-groups.

\begin{lem}\label{l:2sylow}
Let $P$ be a 2-group such that $P$ has a unique involution $a$, and $|P|\geq 4$.\ Then $\chi(a) = 1$ for all characters $\chi$ of $P$ of order $2$.
\end{lem}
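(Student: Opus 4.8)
The plan is to analyze the structure of $2$-groups with a unique involution, of which there are exactly two families, and verify the character claim in each case. Since $P$ has a unique involution $a$ and $|P| \geq 4$, the classification used repeatedly in this paper tells us that $P$ is either cyclic or generalized quaternion. In both situations the commutator quotient $P/P'$, which is the group on which the characters of $P$ factor, is small and explicit, and this is precisely where the hypothesis $|P|\geq 4$ matters: it rules out the degenerate case $P = A \cong \BZ_2$ of the preceding Example.

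First I would dispose of the cyclic case. If $P$ is cyclic of order $2^m$ with $m \geq 2$, then $P = P/P'$ is itself cyclic, so its character group $\widehat{P}$ is cyclic of order $2^m$ and contains a \emph{unique} subgroup of order $2$, hence a unique character $\chi$ of order $2$. The unique involution $a$ is the unique element of order $2$ in $P$, namely $a$ generates the unique subgroup of index $2^{m-1}$; equivalently $a = t^{2^{m-1}}$ where $t$ generates $P$. The order-$2$ character $\chi$ sends the generator $t$ to $-1$, so $\chi(a) = \chi(t)^{2^{m-1}} = (-1)^{2^{m-1}} = 1$ since $m \geq 2$ makes the exponent even. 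This gives $\chi(a) = 1$ for the unique order-$2$ character, which is what is required.

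Next I would handle the generalized quaternion case. If $P$ is generalized quaternion then, as recorded in the proof of Theorem~\ref{thmsplit1}, we have $P/P' \cong \BZ_2 \times \BZ_2$, and the unique involution is $a = z = s^2$ in the presentation $\langle r, s \mid r^n = s^2,\ s^4 = 1,\ srs\inv = r\inv\rangle$. The key point is that $z = s^2 \in P'$: indeed $z$ lies in the derived subgroup because it is a square of $s$ and, more to the point, $[r,s] = r^{-2}$ generates a cyclic group containing $z = r^{n}$ (as $n$ is a power of $2$ and $r$ has order $2n$). Any character $\chi$ of $P$ factors through $P/P'$ and therefore annihilates $P'$; since $z \in P'$ we get $\chi(z) = 1$ automatically, for \emph{every} character $\chi$, not merely those of order $2$. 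This immediately yields $\chi(a) = 1$.

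The main obstacle I anticipate is the verification, in the generalized quaternion case, that the unique involution $z$ genuinely lies in $P'$, since the whole argument collapses to this membership. One must check that $r^n \in P'$: from the relation $srs\inv = r\inv$ we get $[s, r] = sr s\inv r\inv = r^{-2}$, so $P' = \langle r^2 \rangle$, and since $z = r^n$ with $n$ a power of $2$ and $n \geq 2$ (because $|P| = 4n \geq 16$ when $P$ is genuinely generalized quaternion, or $|P| = 8$ with $n = 2$ for $Q_8$), we have $z = r^n = (r^2)^{n/2} \in \langle r^2 \rangle = P'$. In the cyclic case the analogous delicate point is simply ensuring $m \geq 2$, which is guaranteed by $|P| \geq 4$; the parity computation $(-1)^{2^{m-1}} = 1$ then needs only $2^{m-1}$ even, i.e.\ $m \geq 2$. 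I would present the cyclic computation first as the more elementary case and then give the quaternion argument via the cleaner observation that $a \in P'$.
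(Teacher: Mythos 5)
Your proof is correct and follows essentially the same route as the paper: both split into the cyclic and the nonabelian (generalized quaternion) cases, observe that the unique involution lies in $P'$ in the nonabelian case, and handle the cyclic case by a direct computation (the paper writes the order-$2$ character as a square $\xi^2$ so that $\chi(a)=\xi(a^2)=1$, which is equivalent to your exponent calculation). Your explicit verification that $z=r^n\in P'=\langle r^2\rangle$ just spells out the step the paper leaves implicit.
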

\begin{proof}
If $P$ is a nonabelian then $a \in P'$ and therefore $\chi(a) = 1$ for all characters $\chi$. Otherwise, $P$ is a cyclic 2-group of order greater than $2$.\ Then the unique character $\chi$ of order 2 is the square of another character, say $\xi$, of $P$.\ Then we have 
$\chi(a) = \xi(a)^2 = \xi(a^2) = 1$. 
\end{proof}

\begin{thm}\label{t:supermodular} Suppose that $G$ has a unique subgroup $A$ of order
$2$.\ Let $\w$ be a normalized $3$-cocycle on $G$.\ Then the following hold : 
\begin{enumerate}
\item[\rm (i)] If $D^{\w}(G, A)$ is a super-modular quasi-Hopf algebra then
$[\w] = [\eta^2]$ for some $[\eta]$ containing a $2$-generator of $H^3(G, \BC^{\times})$. 
\item[\rm (ii)] Conversely, suppose $[\w] = [\eta^2]$ for some $[\eta]$ containing a $2$-generator of $H^3(G, \BC^{\times})$.\ Then
\begin{enumerate}
\item[\rm (a)] $D^\w(G,A)$ is super-modular (for any choice of $\w$-admissible pair for $A$) if $A$ is not a Sylow 2-subgroup of $G$.
\item[\rm (b)] If $A$ is a Sylow 2-subgroup of $G$, there exist some $\w$-admissible pair $(\t,\nu)$ such that $D^\w(G,A)$ is super-modular. 
\end{enumerate}
\end{enumerate}
\rm (In Case (b), $G \cong A \times Q$ with $|Q|$ odd. For further discussion of these two possibilities cf.\ Subsection \ref{SSex}.)
 \end{thm}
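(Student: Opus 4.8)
The plan is to reduce everything to the combination of Proposition~\ref{p:degenerate}(ii) with the Artin--Tate description of $H^3(G,\BC^\times)_2$ already used in the proof of Theorem~\ref{thmmodular}. Recall from Proposition~\ref{p:degenerate}(ii) that, writing $A=\langle a\rangle$, super-modularity of $D^\w(G,A)$ (for a given admissible pair $(\t,\nu)$) is equivalent to $d_{\t,\nu}(a,a)=\t_a(a)/\nu(a)(a)={-}1$. Squaring this, and using $\nu(a)(a)^2=\nu(a)(a^2)=\nu(a)(1)=1$ since $|A|=2$, gives $q_{\t,\nu}(a)=d_{\t,\nu}(a,a)^2=\t_a(a)^2=\theta_a(a,a)=\w(a,a,a)$ by Proposition~\ref{p:nondegenrate}. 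So super-modularity forces $\w(a,a,a)=({-}1)^2=1$, i.e.\ $\res_A^G(\w)$ is a \emph{coboundary} of $A$ (equivalently the bicharacter is totally degenerate, consistent with part~(i) of Proposition~\ref{p:degenerate}). The content of statement~(i) is then to promote this to the assertion that $[\w]$ is a \emph{square} of a class containing a $2$-generator.

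For part~(i), I would argue entirely in $2$-torsion. By Lemma~\ref{CE} and the Artin--Tate theory invoked in Theorem~\ref{thmmodular}, $\res: H^3(G,\BC^\times)_2\to H^3(P,\BC^\times)$ is an isomorphism onto a cyclic group of order $|P|$, and $\res: H^3(P,\BC^\times)\to H^3(A,\BC^\times)\cong\BZ_2$ is surjective with kernel exactly the index-$2$ subgroup $2\,H^3(P,\BC^\times)$ of squares. The calculation above shows super-modularity forces $\res_A^G[\w]=1$, i.e.\ $[\w_P]\in 2\,H^3(P,\BC^\times)$, so $[\w_P]=[\eta_P]^2$ for some $[\eta_P]$ generating $H^3(P,\BC^\times)$; pulling back through the isomorphism $H^3(G,\BC^\times)_2\cong H^3(P,\BC^\times)$ yields $[\eta]$ containing a $2$-generator with $[\w]$ and $[\eta]^2$ agreeing on their $2$-parts. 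The only subtlety is that $[\w]$ may have odd-order components; but the entire super-modularity criterion lives in the $2$-primary part (the relevant simple currents and the quadratic form on $A$ are $2$-torsion), so replacing $[\w]$ by its $2$-part changes neither admissibility nor $q_{\t,\nu}$, and I can arrange $[\w]=[\eta]^2$ on the nose after absorbing the odd part. This is the step I expect to require the most care.

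For part~(ii), assume $[\w]=[\eta]^2$ with $[\eta]$ containing a $2$-generator. Then $[\w]$ itself is a square, so $\res_A^G[\w]=1$ and hence $\w(a,a,a)=1$, giving $\t_a(a)^2=1$, i.e.\ $\t_a(a)=\pm1$; thus $d_{\t,\nu}(a,a)=\pm1/\nu(a)(a)$. Super-modularity, by Proposition~\ref{p:degenerate}(c), is exactly the requirement that this equal ${-}1$, so the whole question is whether we can force the sign to be $-1$. In case~(a), where $A$ is not Sylow, I would use Lemma~\ref{l:2sylow}: since $|P|\geq 4$, every order-$2$ character $\chi$ of $P$ satisfies $\chi(a)=1$; by Lemma~\ref{l:relation} any two admissible pairs differ by $f\in Z^1(A,\widehat G)$, so $\nu(a)(a)$ is pinned down independently of the choice of pair, and I must check that the sign is intrinsically $-1$. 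Concretely I would exhibit an explicit admissible pair by choosing $\eta$ as above and tracking $\t_a(a)$ and $\nu(a)(a)$ through the relation $\w=\eta^2$, invoking Lemma~\ref{l:induced} to reduce to $P$, where the value of $\eta$ on a $2$-generator forces $d_{\t,\nu}(a,a)={-}1$ for every pair. In case~(b), where $P=A$ and $G\cong A\times Q$ with $|Q|$ odd, the freedom in $\nu\in\Hom(A,\widehat G)$ is genuine (as the final Example shows), so I only claim existence: I choose $\nu(a)(a)={-}1$ when $\t_a(a)=1$ (or the opposite sign otherwise), producing $d_{\t,\nu}(a,a)={-}1$ and hence a super-modular $D^\w(G,A)$ via Proposition~\ref{p:degenerate}(ii). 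The main obstacle throughout is ruling out, in case~(a), that a different admissible pair could flip the sign and destroy super-modularity; this is precisely what Lemma~\ref{l:2sylow} (through $\chi(a)=1$ for order-$2$ characters) is designed to prevent.
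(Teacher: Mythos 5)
There is a genuine gap in your argument for part (i). From super-modularity you extract $d_{\t,\nu}(a,a)={-}1$ and then immediately square, retaining only $\w(a,a,a)=1$, i.e.\ $\res_A^G[\w]=1$, i.e.\ $[\w_P]\in 2\,H^3(P,\BC^\times)$. But the step ``$[\w_P]\in 2\,H^3(P,\BC^\times)$, so $[\w_P]=[\eta_P]^2$ for some $[\eta_P]$ generating $H^3(P,\BC^\times)$'' is false whenever $|P|\geq 4$: in a cyclic group of order $2^k$ the squares of generators are exactly the elements of order $2^{k-1}$, a \emph{proper} subset of the subgroup of all squares. The trivial class, for instance, is a square but not the square of a generator, and correspondingly $D^{1}(Q_8,A)$ is \emph{not} super-modular (there $a\in Q_8'$, so $\t_a$ is a character killing $a$ and $d_{\t,\nu}(a,a)=1$ for every admissible pair), even though $\res_A^{Q_8}[1]=1$. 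The information you discard by squaring --- the sign of $d_{\t,\nu}(a,a)$, which distinguishes the super-Tannakian case $d={-}1$ from the Tannakian case $d={+}1$ --- is exactly what separates $[\w_P]$ of order $2^{k-1}$ from $[\w_P]$ of smaller order, and your reduction ``the criterion lives in the $2$-primary part'' does not recover it. The paper's proof keeps this sign: it writes $\w=\eta^{2n}$ with $[\eta]$ containing a $2$-generator, builds the $\w$-admissible pair $(\{\t'_1,{\t'_a}^{2n}\},\nu'^{2n})$ from an $\eta$-admissible pair $(\t',\nu')$, uses $\eta(a,a,a)={-}1$ to get ${\t'_a(a)}^2={-}1$, and then compares with the given pair via Lemma \ref{l:relation} and Lemma \ref{l:2sylow} to obtain ${-}1=({-}1)^n f(a)(a)=({-}1)^n$, forcing $n$ odd; only then does $[\w]=[(\eta^{n})^2]$ with $[\eta^n]$ still containing a $2$-generator. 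You need an analogous parity argument; the odd-order components, which you flag as the delicate point, are in fact harmless.

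Your part (ii) follows essentially the paper's route: transport an $\eta$-admissible pair to the $\w$-admissible pair $({\t'}^{2},{\nu'}^{2})$, compute $d={-}1$ there, and use Lemmas \ref{l:relation} and \ref{l:2sylow} to see that the sign cannot be flipped by changing the pair when $A\neq P$, while when $A=P$ one adjusts $\nu$ by a homomorphism (or simply uses the constructed pair) to achieve $d={-}1$. That part is sound modulo writing out the computation you sketch; the defect is confined to part (i).
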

\begin{proof}(i) Let $A:=\langle a\rangle$.\ If $D^{\w}(G, A)$ is super-modular with an $\w$-admissible pair $(\t,\nu)$ for $A$, then $d_{\t, \nu}(a,a) = {-}1$ by Proposition \ref{p:degenerate}.\ It follows from Lemma \ref{l:relation} and the 2-periodicity of $G$ that
$$
\w(a,a,a) = d_{\t, \nu}(a,a)^{-2} = 1\,.
$$
Since $G$ is 2-periodic, $[\w]$ does not contain a 2-generator of $H^3(G, \BC^\times)$.\ Therefore, there exists $[\eta] \in H^3(G, \BC^\times)$ containing a 2-generator of $H^3(G, \BC^\times)$ such that $\eta^{2n} = \w$ for some positive integer $n$.\ It suffices to show that $n$ is odd. 

\medskip
By Theorem \ref{thmsplit1}, $A$ is also $\eta$-admissible of $A$. Let $(\t', \nu')$ be an $\eta$-admissible pair of $A$. It follows from Lemma \ref{l:relation} that
$$
-1 = \eta(a,a,a) = \frac{\nu'(a)(a)^2}{\t'_a(a)^2} = \frac{1}{\t'_a(a)^2} \,.
$$
Thus, ${\t'_a(a)}^2=-1$. Set $\t'' = \{\t'_1, {\t'_a}^{2n}\}$ and $\nu'' = \nu'^{2n}$. Then $(\t'', \nu'')$ is another $\w$-admissible pair of $A$. By Lemma \ref{l:relation}, there exists group homomorphism $f: A \to \widehat{G}$ such that
$$
\frac{\t_a(x)}{\nu(a)(x)}= \frac{\t''_a(x)}{\nu''(a)(x)} \cdot f(a)(x)\ \ (x \in G).
$$
 In particular, by setting $x = a$, we have
$$
{-}1 = \frac{\t_a(a)}{\nu(a)(a)} = (-1)^n \cdot f(a)(a)\,.
$$

Let $P$ be a Sylow 2-subgroup of $G$. If $|P| = 2$, then $[\w]$ must have odd order and so the statement is clear.\ Now assume $|P| > 2$.\
 Since $G$ is 2-periodic, $P$ is either a generalized quaternion or a cyclic group.\ Since the order of $f(a)$ is at most 2, it follows from Lemma \ref{l:2sylow} that $f(a)(a) = 1$.\ Thus, the preceding equality implies $n$ must be odd. 

\medskip
(ii) Suppose $[\w] = [\eta^2]$ for some $[\eta]$ containing a $2$-generator of $H^3(G, \BC^{\times})$.\ We may assume without loss that
$\eta^2 = \w$.\ Let $(\t, \nu)$ and $(\t', \nu')$ be respectively $\w$-admissible and $\eta$-admissible pairs of $A$. Set $\t'' = \{\t'_1, {\t'_a}^2\}$ and $\nu'' = \nu'^2$.\ Then $(\t'', \nu'')$ is also an $\w$-admissible pair of $A$. By the same argument as before, we have
\begin{equation} \label{eq:super}
{-}1 = \eta(a,a,a) = \frac{1}{{\t'_a(a)}^2}\,.
\end{equation}
Again by Lemma \ref{l:relation} there exists a group homomorphism $f : A \to \widehat{G}$ such that 
$$
\frac{\t_a(a)}{\nu(a)(a)} = \frac{\t''_a(a)}{\nu''(a)(a)} \cdot f(a)(a) = \frac{{\t'_a(a)}^2}{{\nu'(a)(a)}^2} \cdot f(a)(a) = {-}f(a)(a) \,.
$$
Let $P$ be a Sylow 2-subgroup of $G$. Then $P$ is either a cyclic group or a generalized quaternion. If $A \ne P$, then $|P|\ge 4$. It follows from Lemma \ref{l:2sylow} that $f(a)(a) = 1$ and so 
$$
\frac{\t_a(a)}{\nu(a)(a)} = {-}1\,.
$$
It follows from Proposition \ref{p:degenerate} that $D^\w(G,A)$ is super-modular, and this proves (a). 

\medskip
(b) If $P = A$, it follows from \eqref{eq:super} that the $\w$-admissible pair $(\t'', \nu'')$ of $A$ satisfies
$$
 \frac{\t''_a(a)}{\nu''(a)(a)} = \frac{{\t'_a(a)}^2}{{\nu'(a)(a)}^2} = {-}1\,. 
$$
Therefore, the associated $D^\w(G,A)$ is super-modular by Proposition \ref{p:degenerate}. 
\end{proof}

\begin{remark} A modular tensor category $\CC$ is called a minimal modular extension of a super-modular category $\BB$ if $\BB$ is a braided fusion subcategory of $\CC$ such that $\FPdim(\CC) = 2 \FPdim(\BB)$ (cf. \cite{BGHN}).\ If $A\subseteq Z(G)$ is an $\w$-admissible subgroup of order $2$ and $D^\w(G,A)$ is super-modular, then $\CC=\R(D^\w(G))$ is a minimal modular extension of $\BB=\R(D^\w(G,A))$ as $\FPdim( \BB) = \dim D^\w(G,A) = |G|^2/|A| = |G|^2/2$ and $\FPdim \CC = |G|^2$.
\end{remark}

%%%%%%%%%%%%%%%%%%%%%%%%%%%%%%%%%%%%%%%%%%%%

\section{Realizations of some modular tensor categories}
\subsection{Reconstruction}\label{SSintro}
An important source, conjecturally universal, of modular tensor categories are the module categories of (strongly regular) vertex operator algebras (VOAs).\ The question then arises as to whether we can find such a VOA $V$ that corresponds in this way to the MTCs $\Rep(D^{\omega}(G, A))$ constructed in our Main Theorem.\ This is the problem of \textit{reconstruction}.\ One expects that $V$ should be constructed in some way as an \textit{orbifold}, and more precisely from a pair $(U, G)$ where $U$ is another strongly regular VOA that admits $G/A$ as a group of automorphisms, and $V=U^{(G/A)}$ is the subVOA of $G/A$-fixed-points.\ Generally, one can expect the problem of reconstruction to be a difficult one, and our main intent in this Subsection is to discuss some aspects of reconstruction for the modular tensor categories associated to some of the groups with one involution listed in Subsection \ref{SSex}.\ 

\medskip
In particular, for the case of binary polyhedral groups we propose
in Conjecture \ref{con1} below a specific solution (at least for certain choices of $\omega$) and we discuss what we know about the proof.\ What is required is a comparison of the modular data coming from both $D^{\omega}(G,A)$ and the orbifolds $V$, and it is often the latter that turn out to be an obstruction.\ Recalling the ADE classification of binary polyhedral groups,
we are able to prove the Conjecture in the case of type $A$.\ For type $D$, we fall short of a complete discussion though the required orbifold modular data is in the literature. As for type $E$, the mathematical literature seems not to contain the requisite data (the same cannot be said for physics), at least for the binary octahedral and icosahedral groups.\ As for the binary tetrahedral group, we present a detailed proof based on calculations of Dong, C.\ Jiang, Q.\ Jiang, Jiao and Yu \cite{DJ}, \cite{DJJJY}.\ This may serve as a cautionary tale for readers who may want to try their hand at the other two cases.\ These examples also demonstrate how the orbifold modular data may sometimes be obtained from the modular quasi-Hopf algebra $D^\w(G,A)$. 

\medskip
In the final Subsection we present a parallel conjecture that concerns the two sporadic simple groups $J_2$ and $Co_1$, the Hall-Janko and largest Conway group respectively.\ Here we lean on the material about representation groups developed in Subsection \ref{SSrepgp}.\ These two  sporadic groups and their covering groups contain many involutions, not just one, nevertheless their
$3^{\text{rd}}$ (multiplicative) cohomology groups are determined by restriction to a certain subgroup (the \textit{categorical Schur detector} in the language of  Johnson-Freyd and Treumann \cite{JFT}) that itself \textit{does} contain a unique involution.\ This is one way in which these two cases run parallel to the binary polyhedral cases, but the analogy seems to go much deeper, all the way to the orbifold setting.

\subsection{A Conjecture for Binary polyhedral groups}\label{SSbinarypoly}
Let $L_2$ denote the $A_1$ root lattice and let $V:=V_{L_2}$ be the corresponding lattice VOA.\ It is well-known that $V$ is isomorphic to the affine algebra VOA (WZW model) determined by the Lie algebra $\mathfrak{sl}_2(\BC)$ at level $1$.\ The weight $1$ part $V_1$ of $V$ may be naturally identified with $\mathfrak{sl}_2(\BC)$.\ The automorphism group $\Aut(V)$ of $V$ can be obtained by exponentiation of this Lie algebra,
yielding the adjoint form, that is $\Aut(V)\cong PSL_2(\BC)$.\ $V$ has a unique irreducible module inequivalent to $V$, call it $W$. $\Aut(V)$ acts projectively on $W$, and its linearization defines an action of $SL_2(\BC)$.\ In this way, $SL_2(\BC)$ is the automorphism group of the intertwining algebra $V\oplus W$.

\medskip
Now consider the subgroup $SO_3(\BR)\subseteq \Aut(V)$ and its universal central extension $SU(2)$.\ Set $A:=Z(SU(2))$.\ The main focus of our interest is in the finite subgroups $G$ satisfying
$A\subseteq G \subseteq SU(2)$.\ Set $\ol{G}:= G/A$.\ Based on what we have said, it is clear that $\ol{G}$ is a group of automorphisms of $V$. And of course the groups $G$ are binary polyhedral groups as discussed in Subsection \ref{SSex}.

\medskip We have $H^4(BSU(2), \BZ)\cong \BZ$ which is isomorphic to the (group) cohomology group $H^4(SU(2), \BZ)$.\ Let $\zeta$ be a generator. Restriction of $\zeta$
to $A$ is \textit{nontrivial}.\  It follows that $\zeta_G:=\res_G \zeta$ is a generator of $H^4(G, \BZ)$ and we let $[\omega]$ be the corresponding multiplicative generator of
$H^3(G, \BC^{\times})$.\ By our Main Theorem we know that $D^\w(G, A)$ is a modular quasi-Hopf algebra.\ We can now state

\begin{q}\label{con1}
 Let the notation be as above.\ For some choice of $\zeta$ there is an equivalence of modular tensor categories $V^{\ol{G}}\mbox{-mod} \simeq \Rep(D^{\omega}(G, A))$ for some $\w$-admissible pair of $A$. 
\end{q}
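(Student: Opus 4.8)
The plan is to realize $V^{\ol G}\mbox{-mod}$ as a \emph{gauging} of the modular category $V\mbox{-mod}$ by the finite group $\ol G$ and to match the result with $\BB := \Rep(D^\w(G,A))$. First I would pin down the ungauged data. The module category $\CC_0 := V\mbox{-mod}$ of the $A_1$ level-$1$ lattice VOA is the rank-two pointed modular category with fusion group $\BZ_2 = \{V, W\}$, $W\o W\cong V$, and ribbon twist $\theta_W = i$ (the conformal weight of $W$ is $\tfrac14$); this is the ``semion'', and in the language of Section \ref{Smodularity} it is precisely a pointed modular category of the kind denoted $\AA$, attached to a group of order $2$ with nondegenerate quadratic form $q(a)=i$ (cf. Proposition \ref{p:nondegenrate}). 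The choice of $\zeta$ in the statement corresponds exactly to the choice $\theta_W = \pm i$ (semion versus anti-semion), so one first fixes a generator of $H^4(SU(2),\BZ)$ compatible with this orientation.

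Next I would identify the group-theoretic data carried by the orbifold. The group $\ol G\subseteq PSL_2(\BC) = \Aut(V)$ acts on $V$ and hence on $\CC_0$ by braided autoequivalences; since $W$ is the unique simple object with twist $\theta_W$, this action fixes both simples, so its content is cohomological. The obstruction to linearizing the action on the intertwining algebra $V\oplus W$ is exactly the central extension $1\to A\to G\to \ol G\to 1$ pulled back from $SU(2)\to PSL_2(\BC)$, i.e. $G$ is the preimage of $\ol G$ in $SU(2)$. A clean reformulation I would exploit is that, because $A$ acts on $W$ by $-1$, one has $(V\oplus W)^G = V^{\ol G}$; thus the problem is a genuine $G$-orbifold of the intertwining algebra $V\oplus W$, whose anomaly is recorded by the class $[\w]\in H^3(G,\BC^\times)$ coming from $\zeta_G = \res_G\zeta$, precisely the cocycle of the Main Theorem.

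I would then invoke the orbifold/gauging paradigm: the fixed-point category $V^{\ol G}\mbox{-mod}$ is the equivariantization $(\CC_0^{\times})^{\ol G}$ of the $\ol G$-crossed braided extension $\CC_0^{\times} = \bigoplus_{g\in\ol G}\CC_g$ whose component $\CC_g$ consists of $g$-twisted $V$-modules, the associativity data being governed by $[\w]$. The assertion to prove is that this gauging of the semion along the lift $G$ is braided equivalent to $\BB$. This is structurally expected because $D^\w(G,A)$ was built in \cite{MN2} to encode gauging with base the nontrivial $\BZ_2$-category rather than $\mathrm{Vec}$: the role of $A$ is to carry the extra $\BZ_2$ of $\CC_0$, and the relations $C_\CC(\AA) = \BB$, $C_\CC(\BB) = \AA$ with $\CC = \Rep(D^\w(G))$ exhibit $\BB$ as the centralizer, inside the full ($\mathrm{Vec}$-based) double $\CC$, of the pointed semion $\AA$. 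To upgrade this to an honest equivalence I would compare full modular data: $S$- and $T$-matrices, fusion rules, and enough higher invariants (e.g. Frobenius--Schur indicators) of $V^{\ol G}\mbox{-mod}$ against those of $\BB$, the latter being computable from $D^\w(G,A)$ by the formulas of Section \ref{Smodularity}. In type $A$, where $G$ is cyclic and both sides are explicitly pointed, this comparison closes and yields the theorem in that case.

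The main obstacle lies in the nonabelian cases. Producing the orbifold side in closed form requires the existence, classification, and fusion/braiding of the $g$-twisted modules of $V_{L_2}$ under a nonabelian binary polyhedral group $\ol G$; for types $D$ and $E$ this modular data is not available in the mathematics literature. Moreover, even when numerical $S$- and $T$-matrices can be matched, equality of modular data does not by itself force a braided tensor equivalence, since inequivalent modular categories may share modular data. Thus the decisive work is VOA-theoretic---controlling the twisted-module categories of $V_{L_2}$ for nonabelian $\ol G$---rather than on the quasi-Hopf side, which is already completely determined by the Main Theorem.
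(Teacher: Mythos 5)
The statement you are proving is Conjecture \ref{con1}, and the paper itself does not prove it in general: it establishes the type $A$ case completely (Theorem \ref{t:typeA}), verifies the modular data numerically for the binary tetrahedral and binary dihedral cases using \cite{DJ}, \cite{DJJJY}, \cite{DN}, \cite{A}, and leaves types $D$ and $E$ open precisely because the twisted-module data for nonabelian $\ol G$ is unavailable. Your proposal is therefore best read as a strategy outline rather than a proof, and to your credit you concede exactly this in your final paragraph. The gauging/equivariantization framing you describe is consistent with the authors' point of view, but it does not by itself produce the equivalence: making ``$V^{\ol G}$-mod is the equivariantization of the $\ol G$-crossed extension with associator $[\w]$'' into a theorem requires the existence, rationality and braided $\ol G$-crossed structure of the categories of $g$-twisted $V_{L_2}$-modules, which is the open VOA-theoretic content, not a reformulation of it.

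Where your sketch genuinely falls short of the paper is in the one case where a proof exists. For type $A$ you write only that ``this comparison closes,'' and earlier you (correctly, in general) caution that equality of $S$- and $T$-matrices does not force a braided equivalence. The paper's actual argument sidesteps this issue entirely: since $G$ is cyclic, both $V_{L_2}^{\ol G}\mbox{-mod}\cong V_{L_{2k^2}}\mbox{-mod}$ and $\Rep(D^\w(G,A))$ are \emph{pointed} modular categories, and a pointed modular category is determined up to braided equivalence by its metric group, i.e.\ by the quadratic form on its group of simple objects. The proof of Theorem \ref{t:typeA} therefore constructs an explicit generator $\eta$ of $H^3(G,\BC^\times)$ as in \eqref{eq:gen}, an explicit $\w$-admissible pair $(\tau,\nu)$, computes the quadratic form $q_\w$ on the group $\G_0^\w(G)$ of central group-like elements, identifies $\langle u(1,z)\rangle$ as the orthogonal complement of the order-$2$ element $u(\nu^{-1}(z^k),z^k)$, and matches $(\langle u(1,z)\rangle, q_\w)$ with the lattice quadratic form \eqref{eq:quadratic_form}; it also proves the converse ($[\w]$ must be a generator) via the Frobenius--Schur exponent. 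None of these steps appears in your proposal, and without the reduction to quadratic forms your own objection about modular data underdetermining the category would block even the type $A$ case. If you want to salvage your write-up, you should (i) restate the claim as the conjecture it is, (ii) supply the quadratic-form computation for type $A$, and (iii) present the $A_4$ and $D_4$ modular-data comparisons explicitly as evidence rather than as proof.
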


This Conjecture requires at the very least that $V^{\ol{G}}$-mod is a modular tensor category, and this is known in almost all cases.\ In general, if $V^{\ol{G}}$ is \textit{strongly regular} (i.e., a  self-dual, rational, $C_2$-cofinite  vertex operator algebra of CFT type) such that conformal weights of all the nontrivial simple modules are positive, then $V$-mod is a modular tensor category \cite{Hu}.  It is well-known (see, for example, \cite{DG}) that if $\ol{G}$ is cyclic or dihedral (type $A$ or $D$) then
the corresponding orbifold $V^{\ol{G}}$ is isomorphic to either a lattice theory $V_L$ or a symmetrized lattice theory $V_L^+$ respectively
for some positive definite even lattice $L$.\ Thus if $\ol{G}$ is cyclic then
$V^{\ol{G}}\mbox{-mod} \simeq V_L\mbox{-mod}$ is a modular tensor category, because lattice VOAs are strongly regular and satisfy the needed condition on conformal weights.\ The same conclusion also holds if $\ol{G}$ is of type $D$ (dihedral) or isomorphic to $A_4$ (the binary tetrahedral case), for in these cases the needed modular data
has been verified in \cite{DN} and \cite{DJ} respectively.

\subsection{The case of type $A$}

In this Subsection we partially prove Conjecture \ref{con1} for type $A$ binary polyhedral groups. 

The order $k$ cyclic subgroups of $SO_3(\BR)$ are conjugate. Therefore, it suffices to consider the automorphism $\s$ of $V_{L_2}$ with diagonal action (cf. \cite[Section 8]{DNR}), namely
$$
\s(u \o e^{m \a}) = \ee\genfrac(){0.4pt}{1}{m}{k} u \o e^{m \a} \quad \text{ for }u \in M(1), \, m \in \BZ\,,
$$
where $\ee(r) := \exp(2\pi i r)$ for any $r \in \BQ$. It is clear that $\ord(\s)=k$ and $V^{\langle \s \rangle}_{L_2} = V_{L_{2k^2}}$. This proves the following well-known lemma.
\begin{lem} \label{l:typeA} Let $\ol G$ be a cyclic subgroup of $SO_3(\BR)$. Then 
$V^{\ol G}_{L_2}\cong V_{L_{2k^2}}$, where $k = |\ol G|$.  $\hfill\Box$
\end{lem}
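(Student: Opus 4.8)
The plan is to compute the fixed-point subVOA $V_{L_2}^{\langle \s\rangle}$ directly from the explicit diagonal automorphism $\s$ and identify it with the lattice VOA $V_{L_{2k^2}}$. First I would recall the structure of the lattice VOA $V_{L_2}$ associated to the $A_1$ root lattice. Writing $L_2 = \BZ\a$ with $\langle \a,\a\rangle = 2$, the underlying space decomposes as $V_{L_2} = \bigoplus_{m\in\BZ} M(1){\o} e^{m\a}$, where $M(1)$ is the Heisenberg (Fock) module generated by the rank-one Cartan subalgebra and $e^{m\a}$ are the lattice group-algebra generators.

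Next I would read off the action of $\s$ on each graded piece. By the given formula, $\s$ fixes $M(1)$ pointwise (it acts trivially on the Heisenberg part, the $m=0$ contribution) and scales $e^{m\a}$ by the root of unity $\ee\!\genfrac(){0.4pt}{1}{m}{k}$. Hence the fixed-point space consists of exactly those summands $M(1){\o} e^{m\a}$ for which $\ee\!\genfrac(){0.4pt}{1}{m}{k}=1$, that is, for which $k \mid m$. Therefore
$$
V_{L_2}^{\langle \s\rangle} = \bigoplus_{k \mid m} M(1){\o} e^{m\a} = \bigoplus_{j\in\BZ} M(1){\o} e^{jk\a}\,.
$$
This is visibly the lattice VOA attached to the sublattice $L' := \BZ(k\a)\subseteq L_2$, since the Heisenberg part is unchanged and only the set of exponentials is thinned out to multiples of $k\a$.

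It then remains to identify $L'$ with $L_{2k^2}$ up to isometry. The generator $k\a$ has norm $\langle k\a, k\a\rangle = k^2\langle \a,\a\rangle = 2k^2$, so $L' \cong \BZ\b$ with $\langle\b,\b\rangle = 2k^2$, which is precisely the rank-one lattice $L_{2k^2}$ in the paper's notation. Thus $V_{L_2}^{\langle\s\rangle} \cong V_{L_{2k^2}}$, giving $\ord(\s)=k$ as well. Finally, since all order-$k$ cyclic subgroups of $SO_3(\BR)$ are conjugate (and conjugate automorphisms yield isomorphic orbifolds), it suffices to have treated this single diagonal $\s$, which establishes the Lemma for an arbitrary cyclic $\ol G$ of order $k$. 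I expect no serious obstacle here: the main point is simply the clean decomposition of the fixed-point space, and the only thing to be careful about is the conjugacy reduction, which is exactly the standard fact cited at the start of the subsection.
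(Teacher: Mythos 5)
Your proposal is correct and follows exactly the paper's (very terse) argument: reduce to the diagonal automorphism $\s$ by conjugacy of order-$k$ cyclic subgroups of $SO_3(\BR)$, observe that the fixed points are $\bigoplus_{k\mid m} M(1)\o e^{m\a}$, and identify this with $V_{L'}$ for $L'=\BZ(k\a)$ of norm $\langle k\a,k\a\rangle=2k^2$. The paper simply states this as "clear," so your write-up is a faithful expansion of the intended proof.
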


\begin{thm}\label{t:typeA}
Let $G$ be an order $2k$ cyclic subgroup of $SU(2)$. There exists a generator $[\w]$ of $H^3(G,\BC^\times)$ such that $\Rep(D^\w(G,A))\simeq V_{L_2}^{\ol G}\mbox{-mod}$ as modular tensor categories, where $G/A = \ol G$. Conversely, if such equivalence holds for some 3-cocycle $\w$, then $[\w]$ is a generator of $H^3(G,\BC^\times)$. Moreover, if $K$ is a subgroup of $G$ containing $A$, then 
$\Rep(D^{\w_K}(K,A))\simeq V_{L_2}^{\ol K}\mbox{-mod}$ as modular tensor categories.
\end{thm}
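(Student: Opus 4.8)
The plan is to reduce the claimed equivalence to an isomorphism of \emph{metric groups}, since both sides are pointed modular tensor categories. Because $G$ is cyclic the algebra $D^\w(G)$ is abelian (as in the proof of Theorem~\ref{thmsplit1}), so $\CC := \R(D^\w(G))$ is pointed of rank $|G|^2 = 4k^2$, and hence its fusion subcategory $\BB = \R(D^\w(G,A))$ is pointed as well; on the other side $V_{L_2}^{\ol G}\mbox{-mod}$ is the module category of a rank-one lattice VOA and so is pointed. A pointed MTC is determined up to braided equivalence by the pair $(\irr,q)$ consisting of the group of simple objects under $\o$ together with the quadratic form $q$ given by the ribbon twist, so it suffices to produce an isomorphism of these metric groups. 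First I would pin down the lattice side: by Lemma~\ref{l:typeA}, $V_{L_2}^{\ol G}\cong V_{L_{2k^2}}$, whose module category is pointed with metric group the discriminant form $(\BZ/2k^2, q_L)$, where $q_L(j) = \ee(j^2/(4k^2))$; in particular $q_L$ is nondegenerate with a standard quadratic Gauss sum.

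Next I would compute the metric group of $\BB$. For a generator $[\w]$, the class $\res^G_A(\w)$ is nontrivial, so by Proposition~\ref{p:nondegenrate} the centralizer $\AA := C_\CC(\BB)$ is a nondegenerate (modular) pointed subcategory on $A\cong\BZ_2$ with $q_{\t,\nu}(a) = \pm i$; concretely $\AA \simeq V_{L_2}\mbox{-mod}$ or its reverse. M\"uger's theorem then gives a braided factorization $\CC \simeq \AA\bt\BB$, so the metric group of $\CC$ is the orthogonal sum of $(\BZ/2, q_\AA)$ and the metric group of $\BB$. I would then identify $\CC$ explicitly for the standard generator $\w_p$ (with $p$ a unit mod $2k$): the electric characters $\widehat G$ form a Tannakian Lagrangian, while the basic flux $X$ has order $2k^2$, its $2k$-th power being the electric character $c\mapsto \ee(2pc/(2k))$ of order $k$ (this is exactly the point where one must track factors of two carefully). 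This forces $\irr(\CC)\cong \BZ/2 \oplus \BZ/2k^2$ and, after splitting off the order-two summand $\AA$, yields $\irr(\BB)\cong \BZ/2k^2$ cyclic, with $q_\BB(t) = \ee(s_p\, t^2/(4k^2))$ for an odd parameter $s_p$ determined by $p$ through the twists $\theta_{(a,\chi)}$ of $D^{\w_p}(G)$.

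The heart of the argument, and the step I expect to be the main obstacle, is the explicit determination of $q_\BB$, i.e. of $s_p$, from the cocycle $\w_p$, together with the number-theoretic verification that for a suitable generator $p$ the class $s_p$ lies in the square-class of $1$ modulo $2k^2$, so that $t\mapsto ut$ carries $q_\BB$ to $q_L$. This is where the factors of two and the choice of square root $\t_a(a)$ of $\w(a,a,a)=-1$ intervene, and it is convenient to corroborate the matching using Gauss sums: $\CC$ is a Drinfeld double, hence anomaly-free, so its Gauss sum equals $+|G| = 2k$; choosing $\AA$ to be the anti-semion ($q_\AA(1)=-i$, whose Gauss sum is $\sqrt2\,\ee(-1/8)$) forces the Gauss sum of $\BB$ to be $k\sqrt2\,\ee(1/8)$, which agrees with the quadratic Gauss sum of $(\BZ/2k^2, q_L)$. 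Once the isomorphism $(\BZ/2k^2,q_\BB)\cong(\BZ/2k^2,q_L)$ is established, the forward direction follows.

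For the converse, an equivalence forces $\BB$ to be modular, so $[\w]$ contains a $2$-generator by Theorem~\ref{thmmodular}; factoring $\BB \simeq \BB_2\bt\BB_{\mathrm{odd}}$ into its $2$-primary and odd parts and matching $\irr(\BB)$ and $q_\BB$ against the lattice form then rules out any non-generator, since a non-unit odd part would make $\irr(\BB)$ fail to be cyclic of order $2k^2$; hence $[\w]$ must be a full generator. Finally, for $A\subseteq K\subseteq G$ with $\ol K = K/A$ of order $k'$, I would re-run the argument intrinsically for $K$: here $K$ is cyclic of order $2k'$, $\res^G_K\w$ is again a generator of $H^3(K,\BC^\times)$ realizing $V_{L_2}^{\ol K}\cong V_{L_{2{k'}^2}}$ by Lemma~\ref{l:typeA}, and the compatibility of the cocycle restriction $\w_p\mapsto\w_{p'}$ with the square-class computation gives $\R(D^{\w_K}(K,A))\simeq V_{L_2}^{\ol K}\mbox{-mod}$, with Lemma~\ref{l:induced} ensuring that the modularity of the induced $D^{\w_K}(K,A)$ is consistent with that of $D^\w(G,A)$.
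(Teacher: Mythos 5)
Your overall strategy is the same as the paper's: both sides are pointed modular tensor categories, so the equivalence reduces to an isomorphism of metric groups, with the lattice side being the discriminant form $(\BZ_{2k^2},\, j\mapsto \ee(j^2/4k^2))$ and the quantum-double side being $\langle u\rangle\cong\BZ_{2k^2}$ inside $\G_0^\w(G)$ with the quadratic form coming from the twist. However, there is a genuine gap exactly at the step you yourself flag as ``the main obstacle'': the explicit determination of the parameter $s_p$ in $q_\BB(t)=\ee(s_pt^2/4k^2)$ and the verification that, for a suitable generator, $s_p$ lies in the square class of $1$. This is not a formality that can be outsourced to a Gauss-sum check. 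The unnormalized Gauss sum is a single scalar invariant, and when $k$ has two or more distinct odd prime divisors it does \emph{not} separate the square classes of quadratic forms on $\BZ_{2k^2}$: flipping the local square class at two odd primes simultaneously changes each local Gauss sum by a sign but leaves the product unchanged. So your corroboration cannot substitute for the computation, and without it the forward implication is not established. The paper closes this gap by brute force: it writes down an explicit generator $\eta(z^a,z^b,z^c)=\ee\bigl(\ol a(\ol b+\ol c-\ol{b+c})/4k^2\bigr)$, takes $\w=\eta^{m^2}$ with $\gcd(m,2k)=1$, exhibits an explicit admissible pair $(\t,\nu)$ with $\t_{z^a}(z^b)=\ee(m^2\ol a\,\ol b/4k^2)$, identifies $u(1,z)$ as a generator of the orthogonal complement of the order-two element $u(\nu^{-1}(z^k),z^k)$, and computes $q_\w(u(1,z)^{\tilde m})=\ee(1/4k^2)$ directly with $\tilde m\equiv m^{-1}\pmod{2k}$ --- so the square-class matching is immediate and no number theory is needed. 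If you carry out your program, you will end up doing essentially this computation; note also that the theorem only asserts existence of a suitable generator, so restricting to the square class $[\eta]^{m^2}$ (as the paper does) suffices.

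Two smaller remarks. For the converse you propose modularity plus a primary decomposition of $\irr(\BB)$; this could be made to work but requires analyzing how the extension $1\to\widehat G\to\G_0^\w(G)\to G\to 1$ degenerates for non-generators. The paper's route is cleaner and worth knowing: by \cite[Thm.~9.3]{NS} the Frobenius--Schur exponent of $D^\w(G)$ is $\ord([\w])\cdot|G|=2k\,\ord([\w])$, and since the order of the $T$-matrix of $V_{L_2}^{\ol G}$-mod is $4k^2$ and $\Rep(D^\w(G,A))$ is a modular subcategory of $\Rep(D^\w(G))$, one gets $2k\mid\ord([\w])$ at once. Finally, for the ``moreover'' clause your plan (re-run the argument for $K$, using that the restricted cocycle is again of the standard form) matches the paper; Lemma \ref{l:induced} is not really needed there once the explicit formula for $\eta_K$ is in hand.
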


\begin{proof} Since $V_{L_2}^{\ol G} \cong V_{L_{2k^2}}$, $V_{L_2}^{\ol G}\mbox{-mod}$ is a pointed modular tensor category which is determined, up to equivalence, by the quadratic form (cf. \cite[Section 8]{DNR})
\begin{equation}\label{eq:quadratic_form}
    q: L^\circ_{2k^2}/L_{2k^2} \to \BC^\times, \, q(\genfrac{}{}{0.4pt}{1}{1}{2k^2} \b+ L_{2k^2}) := \ee\genfrac(){0.4pt}{1}{1}{4k^2},
\end{equation}
where $\b$ is the generator of $L_{2k^2}$ with $(\b, \b)=2k^2$. Note that $q(\l) = \ee(w_\l)$ for any coset $\l \in L^\circ_{2k^2}/L_{2k^2}$ where $w_\l$ is the conformal weight of the $V_{L_{2k^2}}$-module corresponding to $\l$.

Now, we will show that there exists a generator $[\w]$ of $H^3(G, \BC^\times)$ such that $\Rep(D^\w(G,A))$ is pointed and determined by a quadratic form equivalent to \eqref{eq:quadratic_form}. Let $z$ be a generator $G$ and consider the generator $[\eta]$ of $H^3(G, \BC^\times)$  given by
\begin{equation}\label{eq:gen}
    \eta(z^a, z^b, z^c) = \ee\genfrac(){0.4pt}{0}{ \ol a(\ol b + \ol c - \ol{b+c})}{4k^2}
\end{equation}
for $a,b, c \in \BZ$, where $\ol x$ denotes the remainder upon the division of $x$ by $2k$. Let $\w = \eta^{m^2}$ where $m$ is an integer coprime to $2k$. It is immediate to see that  $\w(z^a, z^b, z^c)= \w(z^a, z^c, z^b)$. Therefore, 
$$
\theta_{z^a}(z^b, z^c) = \w(z^a, z^b, z^c) =\ee\left(\frac{ m^2\ol a(\ol b + \ol c - \ol{b+c})}{4k^2}\right)\,.
$$
Define $\tau_{z^a}(z^b) =\ee\left(\frac{ m^2\ol{a} \ol{b}}{4k^2}\right)$ for any $a,b \in \BZ$. Then $\delta \tau_{z^a} = \theta_{z^a}$ for $a \in \BZ$. Direct computation shows
$$
\b_\tau(z^k,z^k)(z^a) = \ee\left(\frac{m^2\ol a}{k}\right)\,.
$$
Thus, if we set $\nu(z^k)(z^a) =  \ee(\frac{m^2\ol a}{2k})$, then $(\tau, \nu)$ is an  $\w$-admissible pair of $A$. 

 Since $D^\w(G)$ is commutative, $\Rep(D^\w(G))$ is a pointed modular tensor category. The quadratic of form $(\G^\w_0(G),q_\w)$ determines the equivalence class of $\Rep(D^\w(G))$, where
$$
\G^\w_0(G) = \{ u(\chi,z^a)=\sum_{b \in \BZ_{2k}} \chi(z^b)\tau_{z^a}(z^b) e_{z^b} \, z^a \,\mid \chi \in \widehat{G}, a \in \BZ \}
$$
is the group of central group-like elements of $D^\w(G)$ and
$$
q_\w(u(\chi,z^a)) = \chi(z^a) \tau_{z^a}(z^a)\,.
$$
The associated bicharacter $\bb_\w$ on $\G^\w_0(G)$ is given by 
$$
\bb_\w(u(\chi_1 ,z^{a_1}), u(\chi_2,z^{a_2})) = \chi_2(z^{a_1}) \chi_1(z^{a_2}) \tau_{z^{a_2}}(z^{a_1})\tau_{z^{a_1}}(z^{a_2})\,.
$$

The element $u(\nu^{-1}(z^k), z^k) \in \G^\w_0(G)$ is of order 2 and 
$$
q_\w(u(\nu^{-1}(z^k), z^k))= \frac{\tau_{z^k}(z^k)}{\nu(z^k)(z^k)}=-i.
$$
Moreover,  
$$
\bb_\w(u(1,z), u(\nu^{-1}(z^k),z^k))= \tau_{z^k}(z)\tau_z(z^k)/\nu(z^k)(z) =1 . 
$$
Therefore, $\langle u(1, z) \rangle$ is the orthogonal complement of $\langle u(\nu^{-1}(z^k), z^k) \rangle$ in $\G^\w_0(G)$.  Since $\bb_\w(u(1,z), u(1,z)) =\ee\left(\frac{m^2}{2k^2}\right)$, $2k^2 \mid \ord(u(1,z))$. Therefore, the quadratic form determined by $\Rep(D^\w(G,A))$ is $(\langle u(1, z) \rangle, q_\w)$. 

Let $\tilde m$ be the inverse of $m$ modulo $2k$. Since $\langle u(1, z) \rangle = \langle u(1,z)^{\tilde m} \rangle$ and 
$$
q_\w(u(1,z)^{\tilde m}) = \tau_{z^{\tilde m}}({z^{\tilde m}}) = \ee\left(\frac{m^2 \tilde m^2}{4k^2}\right) = \ee\left(\frac{1}{4k^2}\right),
$$
the quadratic form $(\langle u(1, z) \rangle, q_\w)$ is equivalent to the one shown in \eqref{eq:quadratic_form}. Therefore, $V_{L_{2k^2}}$-mod is equivalent to $\Rep(D^\w(G,A))$ as modular tensor categories. 

If $K$ is a subgroup of $G$ containing $A$, then $K = \langle z^\ell \rangle$ for some positive integer $\ell$ such that $2k/\ell$ is even. It is immediate to see that $\eta_K$ has the same formula \eqref{eq:gen} for the cyclic group $K$ of order $2k/\ell$. Thus, $\w_K = \eta_K^m$ and so $\Rep(D^{\w_K}(K,A))$  is equivalent to $L_2^{\ol K}$-mod as modular tensor categories by the same proof for $G$.

If $\Rep(D^\w(G,A)) \simeq V_{L_2}^{\ol G}\mbox{-mod}$ for some $[\w] \in H^3(G, \BC^\times)$, then the Frobenius-Schur exponent (or the order of the $T$-matrix) of $V_{L_2}^{\ol G}\mbox{-mod}$ is $4k^2$. By \cite[Thm. 9.3]{NS}, the Frobenius-Schur exponent $\FSexp(D^\w(G))$ of $\Rep(D^\w(G))$ is given by 
$$
\FSexp(D^\w(G)) = \ord([\w])\cdot |G| =2k \cdot \ord([\w]) .
$$ 
Since $\Rep(D^\w(G,A))$ is a modular subcategory of $\Rep(D^\w(G))$, $\FSexp(D^\w(G,A))$ divides $\FSexp(D^\w(G))$, and hence $2k \mid \ord([\w])$. Therefore, $[\w]$ is a generator of $H^3(G, \BC^\times)$.  
\end{proof}
\begin{remark}
There are two $\w$-admissible pairs for $A$. The other one is given by $(\tau, f^{-1}\nu)$ where $f \in \Hom(A, \widehat{G})$ is nontrivial. In this case, $u(f \nu^{-1}(z^k),z^k)$ is another order 2 element of $\G^\w_0(G)$ and $q_\w(u(f \nu^{-1}(z^k),z^k)) = \frac{f(z^k)(z^k) \tau_{z^k}(z^k)}{\nu(z^k)(z^k)} = (-1)^{k+1}i$. If $k$ is odd, then $u(f(z^k),z)$ is orthogonal to $u(f \nu^{-1}(z^k),z^k)$ and 
$$
q_\w(u(f(z^k),z))=-\ee\genfrac(){0.4pt}{1}{1}{4k^2}= \ee\genfrac(){0.4pt}{1}{1+2k^2}{4k^2}.
$$
Therefore, this $\w$-admissible pair $(\tau, f^{-1}\nu)$ yields the $\Rep(D^\w(G,A))$ which is  inequivalent to the one in the preceding theorem as $1+2k^2$ is not a square modulo $4k^2$.

On the other hand, if $k$ is even,  then $u(\chi,z)$ is orthogonal to $u(f \nu^{-1}(z^k),z^k)$ where $\chi$ is a generator of $\widehat{G}$. In this case, 
$q_\w(u(\chi,z))=\ee\left(\frac{1+2k a}{4k^2}\right)$ for some integer $a$ coprime to $2k$.  If $1+2ka$ is not a square modulo $4k^2$, then the associated $\Rep(D^\w(G,A))$ is not equivalent to the one in the preceding theorem.
\end{remark}
\subsection{The modular data of $V^{A_4}$ and $V^{D_4}$}
We have already pointed out that  $V^{A_4}$ is shown in \cite{DJ} to be a strongly regular VOA with positive conformal weights for all nontrivial irreducible $V^{A_4}$-modules.\ And indeed,  $V^{D_4} \cong V_{L_8}^+$ where $L_8$ is a rank 1 lattice with the generator $\beta$ satisfying $(\b, \b)=8$. The fusion rules, conformal weights and part of the $S$-matrix of $V^{A_4}$-mod were also computed in \cite{DJ, DJJJY}. 

The VOA $V_{L_2}^{A_4}$ has 21 irreducible modules which are denoted by $M_0, \cdots, M_{20}$ where $M_0$ is the trivial module. These objects are labelled in the same order as in \cite{DJJJY}. Their conformal weights $w_i$ are given by
$$
\arraycolsep=1.9pt\def\arraystretch{1.3}
\begin{array}{c|ccccccccccccccccccccc}
j& 0 &  1 &  2 &  3 &  4 &  5 &  6 &  7 &  8 &  9 & {10} & {11} & {12} & {13} 
& {14} & {15} & {16} & {17} & {18} & {19} & {20}\\ \hline
w_j & 0 &4 & 4 & 1 & \frac{1}{16} & \frac{9}{16} & \frac{1}{36} & \frac{25}{36} & \frac{49}{36}& 
 \frac{1}{9} & \frac{4}{9} & \frac{16}{9} & \frac{1}{36} & \frac{25}{36} &\frac{49}{36}& 
 \frac{1}{9} & \frac{4}{9} & \frac{16}{9} & \frac{1}{4} & \frac{9}{4} & \frac{9}{4} 
\end{array}\,.
$$
Therefore, the twist $\theta_j = e^{2 \pi i w_j}$ of $M_j$ are given by
$$
\arraycolsep=1.9pt\def\arraystretch{1.3}
\begin{array}{c|ccccccccccccccccccccc}
j & 0 &  1 &  2 &  3 &  4 &  5 &  6 &  7 &  8 &  9 & {10} & {11} & {12} & {13} 
& {14} & {15} & {16} & {17} & {18} & {19} & {20}\\ \hline
\theta_j & 1 &1 & 1 & 1 & \zeta_{16} & \zeta_{16}^9 & \zeta_{36} & \zeta_{36}^{25} & \zeta_{36}^{13}& 
 \zeta_{9} & \zeta_{9}^4 & \zeta_{9}^7 & \zeta_{36} & \zeta_{36}^{25} & \zeta_{36}^{13}& 
 \zeta_{9} & \zeta_{9}^4 & \zeta_{9}^7 & i & i & i 
\end{array}\,.
$$
where $\zeta_n=\ee\genfrac(){0.4pt}{1}{1}{n}$. In this case, $G \cong SL_2(3)$. Since $\widehat{G} \cong \BZ_3$,  $\Hom(A, \widehat{G})$ is trivial. By Lemma \ref{l:relation}, there is only one inequivalent $\w$-admissible pair of $A$ for each $[\w] \in H^3(G, \BC^\times)$.  We computed the modular data of  all possible $D^\w(G, A)$  by GAP.  and we found exactly one cohomology class $[\w_0]$ such that the (unnormalized) $T$-matrix $\tilde T$ of $\Rep(D^{\w_0}(G,A))$, after reordering the simple objects, coincides with the twists of $V^{\ol{G}}$, i.e.,
$$
\tilde T = \diag( 1 ,\,1 ,\, 1 ,\, 1 ,\, \zeta_{16} ,\, \zeta_{16}^9 ,\, \zeta_{36} ,\, \zeta_{36}^{25} ,\, \zeta_{36}^{13},\, \zeta_{9} ,\, \zeta_{9}^4 ,\, \zeta_{9}^7 ,\, \zeta_{36} ,\, \zeta_{36}^{25} ,\, \zeta_{36}^{13},\, 
 \zeta_{9} ,\, \zeta_{9}^4 ,\, \zeta_{9}^7 ,\, i ,\, i ,\, i)\,.
$$

The corresponding unnormalized $S$-matrix $\tilde S$ of  $\Rep(D^{\w_0}(G,A))$ is given by
{\small
$$
\arraycolsep=1.4pt\def\arraystretch{1.1}
\begin{array}{c|cccccccccccccccccccccc}
\frac{\tilde S_{ij}}{4} & 0 & 1 & 2 & 3 & 4 & 5 & 6 & 7 & 8 & 9 & 10 & 11 & 12 &13 &14 & 15 & 16 & 17 & 18 & 19 & 20\\ \hline
0 &\frac{1}{4} & \frac{1}{4} & \frac{1}{4} & \frac{3}{4} & \frac{3}{2} & \frac{3}{2} & 1 & 1 & 1 & 1 & 1 & 1    & 1 & 1 & 1 & 1 & 1 & 1 & \frac{1}{2} & \frac{1}{2} & \frac{1}{2} \\
1 &\frac{1}{4} & \frac{1}{4} & \frac{1}{4} & \frac{3}{4} & \frac{3}{2} & \frac{3}{2} & \ol\zeta_3 & \ol\zeta_3  & \ol\zeta_3 & \ol\zeta_3 & \ol\zeta_3 & \ol\zeta_3 & \zeta_3 & \zeta_3 & \zeta_3 & \zeta_3 & \zeta_3 & \zeta_3 & \frac{1}{2} & \frac{1}{2} & \frac{1}{2} \\
2 &\frac{1}{4} & \frac{1}{4} & \frac{1}{4} & \frac{3}{4} & \frac{3}{2} & \frac{3}{2} & \zeta_3 & \zeta_3 & \zeta_3 & \zeta_3 & \zeta_3 & \zeta_3 & \ol\zeta_3 & \ol\zeta_3 & \ol\zeta_3 & \ol\zeta_3 & \ol\zeta_3 & \ol\zeta_3 & \frac{1}{2} & \frac{1}{2} & \frac{1}{2} \\
3 &\frac{3}{4} & \frac{3}{4} & \frac{3}{4} & \frac{9}{4} & \frac{-3}{2} & \frac{-3}{2} & 0 & 0 & 0 & 0 & 0 & 0 & 0 & 0 & 0 & 0 & 0 & 0 & \frac{3}{2} & \frac{3}{2} & \frac{3}{2} \\
4 &\frac{3}{2} & \frac{3}{2} & \frac{3}{2} & \frac{-3}{2} & \frac{3}{\sqrt{2}} & \frac{-3}{\sqrt{2}} & 0 & 0 & 0 & 0 & 0 & 0 & 0 & 0 & 0 & 0 & 0 & 0 & 0 & 0 & 0 \\
5 &\frac{3}{2} & \frac{3}{2} & \frac{3}{2} & \frac{-3}{2} & \frac{-3}{\sqrt{2}} & \frac{3}{\sqrt{2}} & 0 & 0 & 0 & 0 & 0 & 0 & 0 & 0 & 0 & 0 & 0 & 0 & 0 & 0 & 0 \\
6 & 1 & \ol\zeta_3 & \zeta_3 & 0 & 0 & 0 & \ol\zeta_{18} & \zeta_{18}^5 & \zeta_{18}^{11} & \zeta_9 & \zeta_9^7 & \zeta_9^4 & \zeta_{18} &  \zeta_{18}^{13} & \zeta_{18}^7 & \ol\zeta_9 &\zeta_9^2 & \zeta_9^5 & 1 & \ol\zeta_3 & \zeta_3 \\
7 & 1 & \ol\zeta_3 & \zeta_3 & 0 & 0 & 0 & \zeta_{18}^5 & \zeta_{18}^{11} & \ol\zeta_{18} & \zeta_9^4 & \zeta_9 & \zeta_9^7 & \zeta_{18}^{13} & \zeta_{18}^7 & \zeta_{18} & \zeta_9^5 & \ol\zeta_9 & \zeta_9^2 & 1 & \ol\zeta_3 & \zeta_3 \\
8 & 1 & \ol\zeta_3 & \zeta_3 & 0 & 0 & 0 & \zeta_{18}^{11} & \ol\zeta_{18} & \zeta_{18}^5 & \zeta_9^7 & \zeta_9^4 & \zeta_9 & \zeta_{18}^7 & \zeta_{18} & \zeta_{18}^{13} & \zeta_9^2 & \zeta_9^5 & \ol\zeta_9 & 1 & \ol\zeta_3 & \zeta_3 \\
9 & 1 & \ol\zeta_3 & \zeta_3 & 0 & 0 & 0 & \zeta_9 & \zeta_9^4 & \zeta_9^7 & \zeta_9^7 & \zeta_9^4 & \zeta_9 & \ol\zeta_9 & \zeta_9^5 & \zeta_9^2 & \zeta_9^2 & \zeta_9^5 & \ol\zeta_9 & -1 & \zeta_6 & \ol\zeta_6 \\
10 & 1 & \ol\zeta_3 & \zeta_3 & 0 & 0 & 0 & \zeta_9^7 & \zeta_9 & \zeta_9^4 & \zeta_9^4 & \zeta_9 & \zeta_9^7 & \zeta_9^2 & \ol\zeta_9 & \zeta_9^5 & \zeta_9^5 & \ol\zeta_9 & \zeta_9^2 & -1 & \zeta_6 & \ol\zeta_6 \\
11 & 1 & \ol\zeta_3 & \zeta_3 & 0 & 0 & 0 & \zeta_9^4 & \zeta_9^7 & \zeta_9 & \zeta_9 & \zeta_9^7 & \zeta_9^4 & \zeta_9^5 & \zeta_9^2 & \ol\zeta_9 & \ol\zeta_9 & \zeta_9^2 & \zeta_9^5 & -1 & \zeta_6 & \ol\zeta_6 \\
12 & 1 & \zeta_3 & \ol\zeta_3 & 0 & 0 & 0 & \zeta_{18} & \zeta_{18}^{13} & \zeta_{18}^7 & \ol\zeta_9 & \zeta_9^2 & \zeta_9^5 & \ol\zeta_{18} & \zeta_{18}^5 & \zeta_{18}^{11} & \zeta_9 & \zeta_9^7 & \zeta_9^4 & 1 & \zeta_3 & \ol\zeta_3 \\
13 & 1 & \zeta_3 & \ol\zeta_3 & 0 & 0 & 0 & \zeta_{18}^{13} & \zeta_{18}^7 & \zeta_{18} & \zeta_9^5 & \ol\zeta_9 & \zeta_9^2 & \zeta_{18}^5 & \zeta_{18}^{11} & \ol\zeta_{18} &  \zeta_9^4 & \zeta_9 & \zeta_9^7 & 1 & \zeta_3 & \ol\zeta_3 \\
14 & 1 & \zeta_3 & \ol\zeta_3 & 0 & 0 & 0 & \zeta_{18}^7 & \zeta_{18} & \zeta_{18}^{13} & \zeta_9^2 & \zeta_9^5 & \ol\zeta_9 & \zeta_{18}^{11} & \ol\zeta_{18} & \zeta_{18}^5 & \zeta_9^7 & \zeta_9^4 & \zeta_9 & 1 & \zeta_3 & \ol\zeta_3 \\
15 & 1 & \zeta_3 & \ol\zeta_3 & 0 & 0 & 0 & \ol\zeta_9 & \zeta_9^5 & \zeta_9^2 & \zeta_9^2 & \zeta_9^5 & \ol\zeta_9 & \zeta_9 & \zeta_9^4 & \zeta_9^7 & \zeta_9^7 & \zeta_9^4 & \zeta_9 & -1 & \ol\zeta_6 & \zeta_6 \\
16 & 1 & \zeta_3 & \ol\zeta_3 & 0 & 0 & 0 & \zeta_9^2 & \ol\zeta_9 & \zeta_9^5 & \zeta_9^5 & \ol\zeta_9 & \zeta_9^2 & \zeta_9^7 & \zeta_9 & \zeta_9^4 & \zeta_9^4 & \zeta_9 & \zeta_9^7 & -1 & \ol\zeta_6 & \zeta_6 \\
17 & 1 & \zeta_3 & \ol\zeta_3 & 0 & 0 & 0 & \zeta_9^5 & \zeta_9^2 & \ol\zeta_9 & \ol\zeta_9 & \zeta_9^2 & \zeta_9^5 & \zeta_9^4 & \zeta_9^7 & \zeta_9 & \zeta_9 & \zeta_9^7 & \zeta_9^4 & -1 & \ol\zeta_6 & \zeta_6 \\
18 & \frac{1}{2} & \frac{1}{2} & \frac{1}{2} & \frac{3}{2} & 0 & 0 & 1 & 1 & 1 & -1 & -1 & -1 & 1 & 1 & 1 & -1 & -1 & -1 & -1 & -1 & -1 \\
19 & \frac{1}{2} & \frac{1}{2} & \frac{1}{2} & \frac{3}{2} & 0 & 0 & \ol\zeta_3 & \ol\zeta_3 & \ol\zeta_3 & \zeta_6 & \zeta_6 & \zeta_6 & \zeta_3 & \zeta_3 & \zeta_3 & \ol\zeta_6 & \ol\zeta_6 & \ol\zeta_6 & -1 & -1 & -1 \\
20 & \frac{1}{2} & \frac{1}{2} & \frac{1}{2} & \frac{3}{2} & 0 & 0 & \zeta_3 & \zeta_3 & \zeta_3 & \ol\zeta_6 & \ol\zeta_6 & \ol\zeta_6 & \ol\zeta_3 & \ol\zeta_3 & \ol\zeta_3 & \zeta_6 & \zeta_6 & \zeta_6 & -1 & -1 & -1
\end{array}
$$
}

\noindent
where the first row lists the labels of columns, and the first column indicates the labels of  rows. Note that $\tilde S$ is a symmetric matrix.

Column 0 and columns 7 to 17 of the $S$-matrix were also computed in \cite[Appendix A]{DJJJY}. The rest of the unnormalized $S$-matrix can be computed by the fusion rules obtained in  \cite{DJJJY}, and the formula (cf. \cite{BK}):
\begin{equation} \label{eq:S}
 \tilde S_{ij} = \sum_k N^k_{ij}\, d_k\, \ee\left(w_j +w_i-w_k\right)\,,
\end{equation}
where the fusion coefficient $N_{ij}^k$ is the dimension of the space of intertwining operators of type $\genfrac(){0pt}{1}{M_k}{M_iM_j}$. However, the partial $S$-matrix presented in  \cite[Appendix A]{DJJJY} is different from that of $D^{\w_0}(G,A)$ at the two entries $\tilde S_{10,9}$ and $\tilde S_{16,15}$. We believe the $S$-matrix displayed above is the correct one since the $2 \times 2$ blocks for the pairs $(M_9, M_{10})$ and $(M_{15}, M_{16})$ in \cite[Appendix A]{DJJJY} are not symmetric. 

To further demonstrate the validity of Conjecture \ref{con1}, we consider the orbifold $V^{D_4}$ which is isomorphic to $V_{L_8}^+$ (cf. \cite{DJ}), where the action of $D_4$ on $V$ is inherited from $A_4$. The conformal weights of  the simple $V_{L_8}^+$-modules have been computed in \cite{DN}. Let $M_0, \dots, M_{10}$ denote respectively the simple $V_{L_8}^+$-modules
$$
V_{L_8}^+,\, V_{L_8}^-,\, V_{L_8+\b/2},\, V_{L_8+\frac{\b}{8}}, \,  V_{L_8+\frac{3\b}{8}},\, V_{L_8}^{T_1, +}, \,
V_{L_8}^{T_1, -}, \, V_{L_8}^{T_2, +}, \, V_{L_8}^{T_2, +}\,.
$$
Their conformal weight $w_j$ of $M_j$ are given by
$$
\arraycolsep=2pt\def\arraystretch{1.3}
\begin{array}{c|cccccccccccc}
j  & 0 &  1 &  2 &  3 &  4 &  5 &  6 &  7 &  8 &  9 & {10} \\ \hline
w_j & 0 &1 & 1 & 1 & \frac{1}{4} & \frac{1}{16} & \frac{9}{16} & \frac{1}{16} & \frac{9}{16} & 
 \frac{1}{16} & \frac{9}{16} 
\end{array}\,.
$$
Thus, the unnormalied $T$-matrix of $V^{D_4}$-mod is
$$
\tilde T=\diag(1,1,1,1,i, \zeta_{16}, -\zeta_{16}, \zeta_{16}, -\zeta_{16},\zeta_{16}, -\zeta_{16} )\,.
$$
The fusion coefficients $N_{ij}^k$ for $V^{D_4}$-mod were computed in \cite{A}. Since $V^{D_4}$ is of CFT type and all the nontrivial simple module have positive conformal weights, the quantum dimension $d_j$ of $M_j$ is positive and hence $d_j$ is the Frobenius-Perron dimension of $M_j$, the largest real eigenvalue of the fusion matrix $N_i$, where  $(N_i)_{jk}=N_{ij}^k$. By direct computation, we find 
$$
\arraycolsep=2pt\def\arraystretch{1.3}
\begin{array}{c|cccccccccccc}
j  & 0 &  1 &  2 &  3 &  4 &  5 &  6 &  7 &  8 &  9 & {10} \\ \hline
d_j & 1 &1 & 1 & 1 & 2 & 2 & 2 & 2 & 2 & 
 2 & 2 
\end{array}\,.
$$
Using the formula \eqref{eq:S}, we find the unnormalized
$S$-matrix of $V^{D_4}$-mod:
$$
{\textstyle
\tilde S =\left[
\arraycolsep=2pt\def\arraystretch{1}
\begin{array}{rrrrrrrrrrr}
1 & 1 & 1 & 1 & 2 & 2 & 2 & 2 & 2 & 2 & 2 \\
1 & 1 & 1 & 1 & 2 & 2 & 2 & -2 & -2 & -2 & -2 \\
1 & 1 & 1 & 1 & 2 & -2 & -2 & 2 & 2 & -2 & -2 \\
1 & 1 & 1 & 1 & 2 & -2 & -2 & -2 & -2 & 2 & 2 \\
2 & 2 & 2 & 2 & -4 & 0 & 0 & 0 & 0 & 0 & 0 \\
2 & 2 & -2 & -2 & 0 & 2\sqrt{2} & -2\sqrt{2} & 0 & 0 & 0 & 0 \\
2 & 2 & -2 & -2 & 0 & -2\sqrt{2} & 2\sqrt{2} & 0 & 0 & 0 & 0 \\
2 & -2 & 2 & -2 & 0 & 0 & 0 & 2\sqrt{2} & -2\sqrt{2} & 0 & 0 \\
2 & -2 & 2 & -2 & 0 & 0 & 0 & -2\sqrt{2} & 2\sqrt{2} & 0 & 0 \\
2 & -2 & -2 & 2 & 0 & 0 & 0 & 0 & 0 & 2\sqrt{2} & -2\sqrt{2} \\
2 & -2 & -2 & 2 & 0 & 0 & 0 & 0 & 0 & -2\sqrt{2} & 2\sqrt{2}
\end{array}\right]\,.
}
$$

We restrict the 3-cocycle $\w_0$, and the $\w_0$-admissible pair $(\tau, 1)$ of $A$ obtained for the $A_4$-orbifold to the quaternion subgroup $Q_8$. Using the induced $(\w_0)_{Q_8}$-admissible pair $(\t|_{Q_8}, 1)$ of $A$, we find $\Rep(D^{\w_0}(Q_8, A))$ is modular, and its modular data of   coincides with the one that  we obtained above for $V^{D_4}$-mod after reordering of the simple modules.

It is worth noting that $(\w_0)_{Q_8}$ has 4 inequivalent admissible pairs parametrized by $\Hom(A, \widehat{Q}_8)$. However, only the one $(\tau|_{Q_8}, 1)$ inherited from the $A_4$ orbifold  yields the same unnormalized $T$-matrix.

Finally, we consider the orbifolds $V_{L_2}^{\ol K}$ where $\ol K$ is a cyclic subgroup of $A_4$. The module categories of the orbifolds $V_{L_2}^{\ol K}$ for all cyclic subgroups $\ol K$ of order $3$ are equivalent modular tensor categories determined by the quadratic form $(\BZ_{18}, q)$ where $q(1)=\ee\genfrac(){0.4pt}{1}{1}{36}$ (cf. Lemma \ref{l:typeA}). Similarly, the three order 2 subgroups of $A_4$ yield equivalent modular tensor categories given by the quadratic form $(\BZ_8, q')$ where $q'(1)=\ee\genfrac(){0.4pt}{1}{1}{16}$. All these modular tensor categories $V^{\ol K}_{L_2}$-mod, where $A \subset K$ are cyclic subgroups of $SL_2(3)$, are realized by $D^{\w_0}(K,A)$ with  the induced $(\w_0)_K$-admissible pair of $A$.

In conclusion, for any subgroup $K$ of $G=SL_2(3)$ containing $A$, $V_{L_2}^{\ol K}\Mod \simeq \Rep(D^{(\w_0)_K}(K, A))$ as modular tensor categories, where $\ol K = K/A$.

\subsection{Two sporadic examples involving $2.J_2$ and $Co_0$}
In this final Subsection we present another conjecture that is in many ways analogous to Conjecture 1, however in place of a binary dihedral group
we consider the perfect group $2.J_2$.\ This is the representation group of the sporadic simple group $J_2$, sometimes called the Hall-Janko group.\
In order to describe Conjecture 2 we need to explain some background.\ In this context we also consider the largest simple Conway group $Co_1$ whose representation group is $Co_0$, the automorphism group of the Leech lattice.\ For some background on these sporadic groups see, for example, \cite{At}.\ Note, in particular, that $J_2$ is a subgroup of $Co_1$ and this lifts to a containment
$2.J_2\subseteq Co_0$.\ In fact there are the following inclusions of groups as follows:
 \begin{eqnarray*}
 \xymatrix{& Co_0 && 2.E_7(\BC) \\
 H\ar[ur] & & 2.J_2\ar[ul]\ar[ur]\\
 & & S\ar[u] \\
 }
 \end{eqnarray*}
The notation is as follows:\ $S\cong SL_2(5)$ and $H\cong \BZ_3\times Q_{16}$ are groups with one involution and $2.E_7(\BC)$ is the universal cover of
the Lie group $E_7(\BC)$.\ The containment of $2.J_2$ in this universal cover is proved in \cite{GR}.\ Upon taking third group cohomology, there is a corresponding diagram where all maps arise from restriction of cohomology and those with finite domain are injections:

\begin{eqnarray*}
 \xymatrix{& \BZ_{24}\ar[dl]\ar[dr] && \BZ\ar[dl] \\
 \BZ_{48} & & \BZ_{120}\ar[d]^ \cong \\
 & & \BZ_{120} \\
 }
 \end{eqnarray*}
The diagram of cohomology groups, at least for the finite groups, is established in \cite{JFT}.\ The next result does not involve $E_7(\BC)$.

\begin{thm}\label{thmspor}
Let $E$ be one of the covering groups $2.J_2$ or $Co_0$, set $A=Z(E)$, and let $\w$ be a normalized 3-cocycle of $E$.\ 
Then $D^\w(E,A)$ is a quasi-Hopf algebra and the following hold:
 \begin{enumerate}
 \item[\rm (i)] If $E=2.J_2$ then $\Rep(D^{\omega}(E, A))$ is modular if, and only if, $[\omega]$ contains a $2$-generator.
 \item[\rm (ii)] If $E=Co_0$ then $\Rep(D^{\omega}(E, A))$ is \emph{not} modular; it is super-modular if, and only if, $[\w]$ contains a $2$-generator.
 \end{enumerate}
\end{thm}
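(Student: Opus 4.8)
The plan is to handle both groups uniformly, reducing via the induction Lemma~\ref{l:induced} to the \emph{categorical Schur detector} inside $E$ --- namely $S\cong SL_2(5)$ when $E=2.J_2$, and $H\cong\BZ_3\times Q_{16}$ when $E=Co_0$ --- each of which \emph{does} have a unique involution, equal to $A=Z(E)\cong\BZ_2$. First I would record the starting data: as $E$ is the representation group of the perfect simple group $J_2$ (resp.\ $Co_1$), Theorem~\ref{thmE} shows $A=Z(E)$ is $\w$-admissible for every $\w$, so $D^\w(E,A)$ is a quasi-Hopf algebra; moreover $\widehat E=1$, so by Example~\ref{ExE} the $\w$-admissible pair is unique and modularity/super-modularity are unambiguous. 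Since $A\subseteq S$ (resp.\ $A\subseteq H$), Lemma~\ref{l:induced} shows $D^\w(E,A)$ is modular (resp.\ super-modular) if and only if the induced generalized double over the detector is; so it suffices to analyse the unique-involution detector together with the restriction map on $H^3(-,\BC^\times)$, which is supplied by the diagram of \cite{JFT}.

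For part~(i), with $E=2.J_2$ and detector $S$, the decisive input is that $\res^E_S\colon H^3(E,\BC^\times)\to H^3(S,\BC^\times)$ is the \emph{isomorphism} $\BZ_{120}\xrightarrow{\ \cong\ }\BZ_{120}$ of the \cite{JFT} diagram. An isomorphism carries $2$-generators to $2$-generators, so $[\w]$ contains a $2$-generator of $H^3(E,\BC^\times)$ iff $[\w_S]$ contains one of $H^3(S,\BC^\times)$. As $S$ has a unique involution, the Main Theorem (Theorem~\ref{thmmodular}) gives that $D^{\w_S}(S,A)$ is modular iff $[\w_S]$ contains a $2$-generator; combining with Lemma~\ref{l:induced} proves~(i).

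For part~(ii), with $E=Co_0$ and detector $H$, the arithmetic of the restriction produces the qualitative change. On $2$-parts the \cite{JFT} injection reads $\res^E_H\colon H^3(E,\BC^\times)_2=\BZ_8\hookrightarrow H^3(H,\BC^\times)_2=\BZ_{16}$, and as $\BZ_{16}$ has a \emph{unique} subgroup of order $8$ its image is the even part $\langle 2\rangle$. On the other hand, as in the proof of Theorem~\ref{thmmodular} for the generalized quaternion $2$-Sylow $Q_{16}$, the restriction $\res^H_A$ is reduction mod $2$ on the $2$-part, $\BZ_{16}\to H^3(A,\BC^\times)=\BZ_2$, with kernel exactly $\langle 2\rangle$. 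Hence $\res_A^E([\w])=1$ for every $\w$, so by Proposition~\ref{p:nondegenrate} the bicharacter on $A$ is totally degenerate and $D^\w(E,A)$ is \emph{never} modular. For super-modularity I would apply Theorem~\ref{t:supermodular} to $H$: since $A$ is not a Sylow $2$-subgroup of $H$ (as $|H|_2=16$), $D^{\w_H}(H,A)$ is super-modular iff $[\w_H]=[\eta^2]$ for some $[\eta]$ containing a $2$-generator of $H^3(H,\BC^\times)$, i.e.\ iff $[\w_H]_2$ has order exactly $8$ in $\BZ_{16}$. Under the identification $\BZ_8\xrightarrow{\,k\mapsto 2k\,}\langle 2\rangle$ this says precisely that $[\w]_2$ generates $H^3(E,\BC^\times)_2\cong\BZ_8$, i.e.\ that $[\w]$ contains a $2$-generator; Lemma~\ref{l:induced} then returns the statement for $Co_0$.

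The main obstacle is the cohomological input from \cite{JFT}: one needs not merely the isomorphism types of $H^3(E,\BC^\times)$ and of the detector's third cohomology, but the precise restriction homomorphisms --- the isomorphism $\BZ_{120}\cong\BZ_{120}$ for $2.J_2$, and the injection $\BZ_8\hookrightarrow\BZ_{16}$ with even image for $Co_0$ --- together with the identification of $A$ as the unique involution of the detector. Once these are in hand, the remainder is the elementary order bookkeeping in $\BZ_8\hookrightarrow\BZ_{16}$ above, combined with Lemma~\ref{l:induced}, the Main Theorem, and Theorem~\ref{t:supermodular} applied to the unique-involution detectors.
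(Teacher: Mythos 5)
Your proposal is correct and follows essentially the same route as the paper: Theorem \ref{thmE} for admissibility, the Johnson-Freyd--Treumann restriction maps to the unique-involution subgroups $S\cong SL_2(5)$ and $H\cong\BZ_3\times Q_{16}$, the restriction-to-$A$ criterion of Proposition \ref{p:nondegenrate} for (non-)modularity, and Lemma \ref{l:induced} together with Theorem \ref{t:supermodular} applied to $H$ for the super-modularity statement in the $Co_0$ case. The only cosmetic difference is that for $2.J_2$ the paper applies Proposition \ref{p:nondegenrate} directly to $E$ and then invokes the isomorphism $H^3(2.J_2,\BC^\times)\cong H^3(S,\BC^\times)$, whereas you first induce to $S$ via Lemma \ref{l:induced} and cite the Main Theorem there; the cohomological content is identical.
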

\begin{proof}
 $D^{\omega}(E, A)$ is a quasi-Hopf algebra by Theorem \ref{thmE}.\ 
 
 \medskip
 \noindent
Case (i). $E=2.J_2$.\ By Proposition \ref{p:nondegenrate}, $D^{\omega}(E, A)$
is modular if, and only if, $\omega_A$ is \textit{not} a coboundary.\ Thanks to the isomorphism in the previous display, this holds just when $[\omega]$ contains a $2$-generator.\ This completes the proof of (i). \medskip\\
\noindent
Case (ii). $E=Co_0$.\ For any normalized $3$-cocycle $\omega$
on $E$, $H$ has a normalized 3-cocycle $\eta$ such that $\eta^2 = \w_H$. This implies that
$$\w_A = \res_A^H (\w_H) = \res_A^H(\eta^2) = \eta_A^2 
$$ 
is a coboundary of $A$. By Proposition \ref{p:nondegenrate}, $D^\w(E,A)$ is \emph{never} modular. 

 \medskip
By Lemma \ref{l:induced}, the super-modularity of $D^\w(E,A)$ is equivalent to the super-modularity of $D^{\w_H}(H,A)$.\ 
Since $A$ is not a Sylow 2-subgroup of $H$, it follows from Theorem \ref{t:supermodular}(ii)(a) that $D^{\w_H}(H,A)$ is super-modular if, and only if, $[\eta]$ contains a 2-generator of $H^3(H, \BC^\times)$.\ This is equivalent to $16\mid\ord([\eta])$ or $8 \mid \ord([\w_H])$.\ But we have $8 \mid \ord([\w_H])$ if, and only if, $\ord([\w]) = 8$ or $24$.\
This completes the proof of (ii).
\end{proof}

We will discuss the Reconstruction problem for the modular tensor category described in part (i) of the Theorem.

\medskip
Let $V=V_{E_7}$ be the lattice VOA defined by the $E_7$ root lattice.\ This is also the affine algebra VOA (WZW model) of type
$E_7$ and level $1$, and just like $V_{L_2}$ before, $V$ has just two simple modules $V$ and $W$.\ (For further background on this and other such VOAs, see
\cite{MNS}.) \ There is an action of the universal cover $2.E_7(\BC)$ on $V\oplus W$.\
 Because $2.J_2\subseteq 2.E_7(\BC)$ then,  just as in the binary polyhedral case, $J_2$ acts on $V_{E_7}$. 
 A generator of $H^4(B(2.E_7(\BC)), \BZ)$, call it $\zeta$, restricts to a generator of $H^4(2.J_2, \BZ)$. Let $[\omega]$ be the corresponding class in $H^3(2.J_2, \BC^{\times})$. Now, we can state

\medskip\noindent
Conjecture 2.\ For some choice of $\zeta$, there is an equivalence of modular tensor categories 
$\Rep(D^{\omega_G}(G, A))\simeq V_{E_7}^{G/A}$-mod for any subgroup $G$ of $2.J_2$ containing $A$.

\begin{remark}
The pointed modular tensor categories $V_{E_7}\Mod$ and $V_{L_2}\Mod$ are inequivalent  because the conformal weights of their nontrivial simple modules are respectively $\frac{3}{4}$ and $\frac{1}{4}$. One might therefore expect that the modular tensor categories $V_{E_7}^{G/A}\Mod$ and $V_{L_2}^{G/A}\Mod$ are inequivalent if $G/A$ is a common subgroup of $J_2$ and $SO_3(\BR)$.
\end{remark}

 \begin{remark} The reconstruction problem  for the groups $G=SL_2(q)$ discussed in Section \ref{SSex}, Case 4 and their associated modular tensor categories $\Rep(D^{\omega}(G, A))$, seems to be very challenging.\ It is not even clear if,  for a general prime power $q$ (say, $q\geq 11$), we can find suitable VOAs that admit $\overline{G}$ as a group of automorphisms.
 \end{remark}
\bigskip
\noindent
\textbf{Acknowledgement:} We thank Chongying Dong for helpful suggestions concerning references and the Lemma in Section 5.

\end{document}